\numberwithin{equation}{section}
\numberwithin{equation}{section}
\numberwithin{equation}{section}
\def\e{\varepsilon}
\newtheorem{theorem}{Theorem}[section]
\newtheorem{definition}[theorem]{Definition}
\newtheorem{lemma}[theorem]{Lemma}
\newtheorem{proposition}[theorem]{Proposition}
\newtheorem{remark}[theorem]{Remark}
\newenvironment{proof}[1][Proof]{\noindent\textbf{#1.} }{\ \rule{0.5em}{0.5em}}
\begin{document}

\title{Relaxation for an optimal design problem with linear growth and perimeter
penalization }
\author{\textsc{Gra\c{c}a Carita}\thanks{CIMA-UE, Departamento de Matem\'{a}tica,
Universidade de \'{E}vora, Rua Rom\~{a}o Ramalho, 59 7000-671 \'{E}vora, Portugal.
E-mail: gcarita@uevora.pt},\textsc{ Elvira Zappale}\thanks{D.I.In.,
Universit\`{a} degli Studi di Salerno, Via Giovanni Paolo II, 132, 84084 Fisciano (SA)
Italy. E-mail: ezappale@unisa.it}}
\maketitle

\begin{abstract}
The paper is devoted to the relaxation and integral representation in the space of functions of bounded variation for an integral energy arising from optimal design problems. The presence of a perimeter penalization is also considered in order to avoid non existence of admissible solutions, besides this leads to an interaction in the limit energy.  
Also more general models have been taken into account.

\medskip

\noindent\textbf{Keywords}: Relaxation, functions of bounded variation,  perimeter penalization.

\noindent\textbf{MSC2010 classification}:  49J45, 26B30.
\end{abstract}

%\tableofcontents
\section{Introduction}

The optimal design problem, devoted to find the minimal energy configurations of a mixture of two conductive materials, has been widely studied since the pioneering papers \cite{KS1,KS2, KS3}. It is well known that, given a container $\Omega$ and prescribing only the volume fraction of the material where it is expected to have a certain conductivity, an optimal configuration might not exist. To overcome this difficulty, Ambrosio and Buttazzo in \cite{AB} imposed a perimeter penalization and studied the following minimization problem
$$
\min\left\{\int_E (\alpha |Du|^2 +g_1(x,u))dx + \int_{\Omega\setminus E}(\beta |Du|^2+ g_2(x,u)) dx + \sigma P(E,\Omega):E \subset \Omega, u \in H^1_0(\Omega)\right\},
$$
finding the solution $(u,E)$ and describing the regularity properties of the optimal set $E$.

In this paper we are considering the minimization of a similar functional, where the energy density $|\cdot|^2$ has been replaced by more general $W_i$, $i=1,2$ without any convexity assumptions and with linear growth, and since the lower order terms $g_1(x,u)$ and $g_2(x,u)$ do not play any role in the asymptotics, we omit them in our subsequent analysis. The case of $W_i$, $i=1,2$, not convex with superlinear growth has been studied in the context of thin films in \cite{CZ}.

Thus, given $\Omega$  a bounded open subset of $\mathbb{R}^{N}$, we assume that
$W_{i}:\mathbb{R}^{d\times N}\rightarrow\mathbb{R}$ are continuous functions
such that there exist positive constants $\alpha,\beta$ for which
\begin{equation}
\alpha|\xi|\leq W_{i}(\xi)\leq\beta(1+|\xi|)\hbox{ for every }\xi\in
\mathbb{R}^{d\times N},\;\;\;\ i=1,2. \label{H1}%
\end{equation}

%\textbf{the constants $\alpha$ and $\beta$ could be replaced by the same
%constant $C$ and its inverse.}

%Let%
%s assume that $\Omega\left(  \varepsilon\right)  $ is clamped on its lateral boundary.

We consider the following optimal design problem%

\begin{equation}
\underset{%
\begin{array}
[c]{c}%
{\small u\in W}^{1,1}{\small (\Omega;}\mathbb{R}^{d}{\small )}\\
{\small \chi_E\in BV(\Omega;\{0,1\})}%
\end{array}
}{\inf}\left\{  {\int_{\Omega}\left(  \chi_{E}W_{1}(\nabla u)+(1-\chi
_{E})W_{2}\right)  (\nabla u)dx+P(E;\Omega):u=u_{0}}\text{ {on }}%
{\partial\Omega}\right\}  \label{originalpb}%
\end{equation}
where $\chi_{E}$ is the characteristic function of $E\subset\Omega\ $ which
has finite perimeter, see \eqref{perimeter} below.
% Namely,
%\begin{equation}
%P\left(  E;\Omega\right)  :=\sup\left\{  \int_{E}\operatorname*{div}%
%\varphi\,dx:\varphi\in C_{c}^{1}(\Omega;\mathbb{R}^{d}),\Vert\varphi
%\Vert_{L^{\infty}}\leq1\right\}  <+\infty. \label{perimeter1}%
%\end{equation}

Note that by \eqref{perimeter}   and the definition of
total variation, $P\left(  E;\Omega\right)  =\left\vert D\chi_{E}\right\vert
\left(  \Omega\right)  $ and we are lead to the subsequent minimum problem%
\begin{equation}\nonumber
\inf\limits_{%
\begin{array}
[c]{l}%
{\small u\in W}^{1,1}{\small (\Omega;{\mathbb R}}^{d}{\small )}\\
{\small \chi}_{E}{\small \in BV(\Omega;\{0,1\})}%
\end{array}
}\left\{
%TCIMACRO{\dint _{\Omega}}%
%BeginExpansion
{\displaystyle\int_{\Omega}}
%EndExpansion
\left(  \chi_{E}W_{1}+\left(  1-\chi_{E}\right)  W_{2}\right)  \left(
\nabla u\right)  dx+\left\vert D\chi_{E}\right\vert \left(
\Omega\right)  :u=u_0\text{ on }\partial\Omega\right\}  . 
\end{equation}

The lack of convexity of the energy requires a relaxation procedure. To this end we start by localizing our energy, first we introduce the functional 
 $F_{\cal OD}:L^{1}(\Omega;\{0,1\})\times L^{1}(\Omega;\mathbb{R}^{d})\times
\mathcal{A}\left(  \Omega\right)  \rightarrow\lbrack0,+\infty]$ defined by
\begin{equation}
F_{\cal OD}(\chi,u;A):=\left\{
\begin{array}
[c]{lll}%
{
{\displaystyle\int_{A}}
\left(  \chi_{E}W_{1}\left(  \nabla u\right)  +(1-\chi_{E})W_{2}\left(  \nabla
u\right)  \right)  dx+\left\vert D\chi_{E}\right\vert (A)} &  & \text{in
}BV(A;\{0,1\})\times W^{1,1}(A;\mathbb{R}^{d}),\text{\bigskip}\\
+\infty &  & \text{otherwise.}%
\end{array}
\right.  \label{Je}%
\end{equation}
Then we consider the relaxed localized energy of $\left(  \ref{Je}\right)  $ given by%
\begin{equation}\nonumber
\begin{array}
[c]{c}%
\mathcal{F_{\cal OD}}\left(  \chi,u;A\right)  :=\inf\left\{  \underset{n\rightarrow
\infty}{\lim\inf}%
{\displaystyle\int_{A}}
{\left(  \chi_{n}W_{1}\left(  \nabla u_{n}\right)  +(1-\chi_{n})W_{2}\left(
\nabla u_{n}\right)  \right)  dx+}\left\vert D\chi_{n}\right\vert \left(
A\right)  {:}\left\{  u_{n}\right\}  \subset W^{1,1}\left(  A;\mathbb{R}%
^{d}\right)  ,\right. \\
\qquad\qquad\qquad\qquad\qquad\qquad\left.  \left\{  \chi_{n}\right\}  \subset
BV\left(  A;\left\{  0,1\right\}  \right)  ,~u_{n}\rightarrow u\text{ in
}L^{1}\left(  A;\mathbb{R}^{d}\right)  \text{ and }\chi_{n}\overset{\ast
}{\rightharpoonup}\chi\text{ in }BV\left(  A;\left\{  0,1\right\}  \right)
\right\}.
\end{array}
\end{equation}

Let $V:\{0,1\}\times\mathbb{R}^{d\times N}\rightarrow(0,+\infty)$ be given by
\begin{equation}
V\left(  q, z\right)  :=q W_{1}(z)+\left(  1-q\right)  W_{2}\left(
z\right)  \label{Vbar}%
\end{equation}
and $\overline{F_{\cal OD}}:BV(\Omega;\{0,1\})\times BV(\Omega;\mathbb{R}^{d}%
)\times\mathcal{A}\left(  \Omega\right)  \rightarrow\lbrack0,+\infty]$ be defined as
\begin{equation}\label{representation}%
\overline{F_{\cal OD}}\left(  \chi,u;A\right)  :=\int_{A}QV\left(  \chi,\nabla u\right)
dx+\int_{A}QV^{\infty}\left(  \chi,\frac{dD^{c}u}{d\left\vert D^{c}%
u\right\vert }\right)  d\left\vert D^{c}u\right\vert +\int_{J_{\left(
\chi,u\right)  }\cap A}K_{2}\left(  \chi^{+},\chi^{-},u^{+},u^{-},\nu\right)
d\mathcal{H}^{N-1}
\end{equation}
where $QV$ is the quasiconvex envelope of $V$ given in $\left(  \ref{Qfbar}%
\right)  ,$ $QV^{\infty}$ is the recession function of $QV,$ namely,%
\begin{equation}\label{QVinfty}
QV^{\infty}\left(  q, z\right)  :=\lim_{t\rightarrow\infty}\frac{QV\left(
q,t z\right)  }{t},
\end{equation}
and%
\begin{equation}
{K_{2}(a,b,c,d,\nu):=\inf}\left\{  {%
%TCIMACRO{\dint _{Q_{\nu}}}%
%BeginExpansion
{\displaystyle\int_{Q_{\nu}}}
%EndExpansion
QV^{\infty}(\chi(x),\nabla u(x))dx+|D\chi|(Q_{\nu}):}\left(  \chi,u\right)
{{\in\mathcal{A}_2(a,b,c,d,\nu)}}\right\}  {{,}} \label{K2}%
\end{equation}
where%
\begin{align}
\mathcal{A}_2\left(  a,b,c,d,\nu\right)   &  :=\left\{  \left(  \chi,u\right)
\in BV\left(  Q_{\nu};\left\{  0,1\right\}  \right)  \times W^{1,1}\left(
Q_{\nu};\mathbb{R}^{d}\right)  :\right. \label{AFR}\\
&  \left.  (\chi(y),u\left(  y\right))  =(a,c)\text{ if }y\cdot\nu=\frac{1}{2},~(\chi(y),u\left(
y\right))  =(b,d)\text{ if }y\cdot\nu=-\frac{1}{2},\right.\nonumber\\
&  \left.  (\chi, u)\text{ are
}1-\text{periodic in }\nu_{1},\dots,\nu_{N-1} \hbox{ directions}\right\}  ,\nonumber
\end{align}
for $\left(  a,b,c,d,\nu\right)  \in\left\{  0,1\right\}  \times\left\{
0,1\right\}  \times\mathbb{R}^{d}\times\mathbb{R}^{d}\times S^{N-1},$ with
$\left\{  \nu_{1},\nu_{2},\dots,\nu_{N-1},\nu\right\} $ an orthonormal
basis of $\mathbb{R}^{N}$ and $Q_\nu$ the unit cube, centered at the origin, with one direction parallel to $\nu$.

In Section \ref{appl} we obtain the following integral representation.

\begin{theorem}
\label{mainthm}Let $\Omega\subset\mathbb{R}^{N}$ be a bounded open set and let
$W_{i}:\mathbb{R}^{{d}\times{N}}\rightarrow\lbrack0,+\infty)$, $i=1,2$, be continuous functions
satisfying \eqref{H1}. Let $\overline{F_{\cal OD}}$ be the functional defined in $\left(
\ref{representation}\right)  $. Then for every $(\chi,u)\in L^1(\Omega;\{0,1\})\times
L^1(\Omega;\mathbb{R}^{d})$%
\begin{equation}\nonumber
\mathcal{F_{\cal OD}}(\chi,u;A)=\left\{
\begin{array}{ll}
\overline{F_{\cal OD}}(\chi,u;A) &\hbox{ if } (\chi, u)\in BV(\Omega;\{0,1\}) \times BV(\Omega;\mathbb R^d) ,\\
\\
+\infty & \hbox{ otherwise.}
\end{array}
\right.
\end{equation}

\end{theorem}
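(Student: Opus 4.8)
The plan is to establish the two inequalities $\mathcal{F}_{\cal OD}(\chi,u;A)\le\overline{F_{\cal OD}}(\chi,u;A)$ and $\mathcal{F}_{\cal OD}(\chi,u;A)\ge\overline{F_{\cal OD}}(\chi,u;A)$ for $(\chi,u)\in BV(\Omega;\{0,1\})\times BV(\Omega;\mathbb{R}^d)$, and separately to rule out finiteness of $\mathcal{F}_{\cal OD}$ when $(\chi,u)$ fails to lie in this space. The last point is the easiest: by the coercivity half of \eqref{H1}, $\chi_n W_1(\nabla u_n)+(1-\chi_n)W_2(\nabla u_n)\ge\alpha|\nabla u_n|$, so a sequence with bounded energy has $\{\nabla u_n\}$ bounded in $L^1$; together with $u_n\to u$ in $L^1$ this forces $u\in BV$, and the constraint $\chi_n\overset{\ast}{\rightharpoonup}\chi$ in $BV(A;\{0,1\})$ already presupposes $\chi\in BV$. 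Hence $\mathcal{F}_{\cal OD}=+\infty$ off $BV\times BV$.

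For the two matching inequalities the natural route is to recognize $\mathcal{F}_{\cal OD}$ as the relaxation of a functional of the joint field $(\chi,u)$ with respect to $L^1$-convergence, and to invoke a general integral-representation theorem for relaxed functionals on $BV$ (of the type of Bouchitt\'e--Fonseca--Mascarenhas or Fonseca--M\"uller), which the paper has presumably proved or quoted in an earlier section for an abstract energy of the form $\int_A f(v,\nabla v)\,dx$ with linear growth, $v=(\chi,u)$. The key is that $\mathcal{F}_{\cal OD}(\chi,u;\cdot)$, restricted to suitable arguments, is the trace of a Radon measure, is local, and is $L^1$-lower semicontinuous; one then identifies the three densities (bulk, Cantor, jump) by blow-up. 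The bulk density at a point of approximate differentiability is obtained by a blow-up argument producing the cell formula for $QV$ — here one must check that $QV(q,\cdot)$ is the relevant quasiconvexification and, crucially, that the perimeter term $|D\chi_n|(A)$ contributes nothing in the bulk blow-up because competitors can be taken with $\chi_n\equiv q$ constant near a Lebesgue point of $\chi$. The Cantor part is handled as usual via $QV^\infty$ and the fact that $D^c\chi$ and $D^cu$ are mutually singular with respect to the jump set in the relevant decompositions. The jump density is exactly $K_2$: on $J_{(\chi,u)}$ the blow-up limit is a one-dimensional profile connecting the two traces $(\chi^{\pm},u^{\pm})$, and minimizing $\int QV^\infty(\chi,\nabla u)+|D\chi|$ over such profiles on $Q_\nu$ gives \eqref{K2}; note $W_i\ge0$ with only linear growth means the jump of $u$ is ``paid for'' through $QV^\infty$, while the jump of $\chi$ is paid through the perimeter — the surface density $K_2$ couples the two, which is the interaction advertised in the abstract.

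The main obstacle, and the step requiring real care, is the jump-density identification and in particular verifying that the abstract representation theorem applies to the coupled variable $v=(\chi,u)$ despite $\chi$ being constrained to the non-convex set $\{0,1\}$: one cannot freely mollify $\chi$, so the standard truncation/mollification arguments used to produce recovery sequences must be replaced by constructions keeping $\chi_n$ a characteristic function, typically via the coarea formula (slicing $\chi_n$ at a level) to recover the perimeter term, or by working with the pair $(\chi,u)$ as an element of $BV(\Omega;\mathbb{R}^{1+d})$ whose first component happens to take values in $\{0,1\}$ and checking that the relaxation does not leave this class. I would also need to confirm that the De Giorgi--Letta criterion applies so that $A\mapsto\mathcal{F}_{\cal OD}(\chi,u;A)$ is a measure — subadditivity via the usual cut-off/De Giorgi slicing between competitors on overlapping open sets, where the linear growth makes the error terms from the overlap region controllable — and that $QV^\infty$ is well defined and has linear growth, which follows from \eqref{H1} since $\alpha|z|\le QV(q,z)\le\beta(1+|z|)$. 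Once the measure property and the three blow-up identifications are in place, the equality $\mathcal{F}_{\cal OD}=\overline{F_{\cal OD}}$ on $BV\times BV$ follows by Radon--Nikod\'ym decomposition of the measure $\mathcal{F}_{\cal OD}(\chi,u;\cdot)$ with respect to $\mathcal{L}^N$, $|D^c(\chi,u)|$ and $\mathcal{H}^{N-1}\res J_{(\chi,u)}$.
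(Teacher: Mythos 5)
Your outline is correct and matches the paper's strategy: blow-up \`a la Fonseca--M\"uller for the lower bound, De Giorgi--Letta plus the Global Method of Bouchitt\'e--Fonseca--Mascarenhas for the upper bound, identification of the three densities $QV$, $QV^\infty$, $K_2$, and the key technical device of keeping $\chi_n$ characteristic via coarea/Fleming--Rishel slicing rather than mollification (the paper packages exactly this in Lemma \ref{Lemma4.1FMchi} and Propositions \ref{propK2}, \ref{prop3.5BBBF}, and proves a more general Theorem \ref{mainthmgen} in $SBV_0\times BV$ first, then adapts). One small imprecision: for the Cantor part the relevant fact is simply that $D^c\chi=0$ because $\chi$ is a characteristic function of a set of finite perimeter (so $D^c(\chi,u)=(0,D^c u)$ and $\chi$ is $|D^c u|$-measurable, cf.\ Remark \ref{vmeas}), not a mutual-singularity statement about $D^c\chi$ and $D^c u$.
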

This result will be achieved as a particular case of a more general theorem dealing with special functions of bounded variation which are piecewise constants. 

In fact we provide an integral representation for the relaxation of the functional $F:L^{1}(\Omega;\mathbb{R}^{m})\times L^{1}(\Omega;\mathbb{R}^{d}%
)\times\mathcal{A}\left(  \Omega\right)  \rightarrow[ 0,+\infty]$ defined by
\begin{equation}
\label{FG}F(v,u;A):=\left\{
\begin{array}
[c]{lll}%
\displaystyle{\int_Af(v, \nabla u)dx+\int_{A\cap J_v} g(v^{+}, v^{-}, \nu
_{v})d\mathcal{H}^{N-1}} & \hbox{ in }SBV_{0}(A;\mathbb{R}^{m}) \times
W^{1,1}(A;\mathbb{R}^{d}),\\
&  & \\
+\infty & \hbox{ otherwise,} &
\end{array}
\right.
\end{equation}
where $SBV_0(A;\mathbb R^m)$ is defined in \eqref{SBV0} (see Section \ref{pre}) and  $f: \mathbb{R}^{m} \times\mathbb{R}^{d \times N}\to[0,+\infty[$, $g
:\mathbb{R}^{m} \times\mathbb{R}^{m} \times S^{N-1} \to[0, +\infty[$ satisfy
the following hypotheses:

\begin{itemize}
\item[($F_{1}$)] $f$ is continuous;

\item[$(F_{2})$] there exist $0<\beta'\leq\beta$ such that
\[
\beta'|z| \leq f(q, z) \leq\beta(1+ |z|),
\]
for every $(q, z) \in\mathbb{R}^{m}\times\mathbb{R}^{d \times N};$

\item[($F_{3}$)] there exists $L>0$ such that
\[
\left| f(q_{1},z)- f(q_{2},z)\right| \leq L|q_{1}-q_{2}|(1+ |z|)
\]
for every $q_{1},q_{2} \in\mathbb{R}^{m}$ and $z \in\mathbb{R}^{d \times N};$

\item[($F_{4}$)] there exist $\alpha\in(0,1),$ and $C,L>0$ such that
\[
t|z|>L\ \Rightarrow\ \left| f^{\infty}(q, z)- \frac{f(q, t z)}{t}\right|
\leq C \frac{|z|^{1-\alpha}}{t^{\alpha}},\ \hbox{ for every } (q, z)
\in\mathbb{R}^{m} \times\mathbb{R}^{d \times N}, t \in\mathbb{R},
\]

\end{itemize}

with $f^{\infty}$ the recession function of $f$ with respect to the last
variable, defined as%

\begin{equation}
\label{finfty}\displaystyle{f^{\infty}(q, z):= \limsup_{t \to\infty}%
\frac{f(q,t z)}{t},}%
\end{equation}
for every $(q, z)\in\mathbb{R}^{m} \times\mathbb{R}^{d \times N};$

\begin{itemize}
\item[($G_{1}$)] $g$ is continuous;

\item[($G_{2}$)] there exists a constant $C>0$ such that
\[
\frac{1}{C}(1+|\lambda-\theta|)\leq g(\lambda, \theta, \nu)\leq C (1+|\lambda-
\theta|),
\]
for every $(\lambda, \theta, \nu)\in\mathbb{R}^{m} \times\mathbb{R}^{m} \times
S^{N-1}$,

\item[($G_{3}$)] $g(\lambda, \theta, \nu)= g(\theta, \lambda, -\nu)$, for
every $(\lambda, \theta, \nu)\in\mathbb{R}^{m} \times\mathbb{R}^{m} \times
S^{N-1}$.
\end{itemize}

The relaxed localized energy of \eqref{FG} is given by%
\begin{equation}%
\begin{array}
[c]{c}%
\mathcal{F}\left(  v,u;A\right)  :=\inf\left\{  \displaystyle{\liminf_{n
\to\infty} \left( \int_{A} f(v_{n}, \nabla u_{n})dx +\int_{J_{v_{n}}\cap A}
g({v_{n}}^{+}, {v_{n}}^{-}, \nu_{v_{n}}) d\mathcal{H}^{N-1}\right) :\left\{
u_{n}\right\}  \subset W^{1,1}\left(  A;\mathbb{R}^{d}\right)  },\right. \\
\qquad\qquad\qquad\qquad\qquad\qquad\left.  \{ v_{n}\} \subset SBV_{0}\left(
A;\mathbb{R}^{m}\right)  ,~u_{n}\rightarrow u\text{ in }L^{1}\left(
A;\mathbb{R}^{d}\right)  \text{ and }v_{n} \to v \hbox{ in }L^1(A;\mathbb R^m)  \right\}.
\end{array}
\label{calFG}%
\end{equation}

%Observe that we can replace $L^1$ strong convergence by $SBV \ast$ weak convergence in \eqref{calFG} by virtue of Ambrosio's %compactness theorem (cf. Theorem \ref{Theorem 2.1}).

Let $\overline{F_{0}}:SBV_{0}(\Omega;\mathbb{R}^{m})\times BV(\Omega;\mathbb{R}%
^{d})\times\mathcal{A}\left(  \Omega\right)  \rightarrow[0,+\infty]$ be given by
\begin{equation}
\label{representationFG}\overline{F_{0}}\left(  v,u;A\right)  :=\int_{A}Qf\left(
v,\nabla u\right)  dx+\int_{A}Qf^{\infty}\left(  v,\frac{dD^{c}u}{d\left\vert
D^{c}u\right\vert }\right)  d\left\vert D^{c}u\right\vert +\int_{J_{\left(
v,u\right)  }\cap A}K_{3}\left(v^{+},v^{-},u^{+},u^{-},\nu\right)
d\mathcal{H}^{N-1},
\end{equation}
where $Qf$ is the quasiconvex envelope of $f$ given in
\eqref{Qfbar}, $Qf^{\infty}$ is the recession function of
$Qf$, and $K_{3}: \mathbb{R}^{m} \times\mathbb{R}^{m} \times\mathbb{R}^{d}
\times\mathbb{R}^{d} \times S^{N-1} \to[0, +\infty[$ is defined as

\begin{equation}
\label{K3}
\begin{array}{ll}
K_{3}(a,b,c,d,\nu):=
\\
\displaystyle{\inf\left\{  \int_{Q_{\nu}}Qf^{\infty}(v(x),
\nabla u(x))dx+ \int_{J_{v}\cap Q_{\nu}}g(v^{+}(x), v^{-}(x), \nu(x))d
\mathcal{H}^{N-1}: (v,u)\in\mathcal{A}_3(a,b,c,d,\nu)\right\}}
\end{array}
\end{equation}
where
\begin{equation}
\label{A3}%
\begin{array}
[c]{ll}%
\displaystyle{\mathcal{A}_3\left(  a,b,c,d,\nu\right) :=} &\displaystyle{\left\{ (v,u)\in(SBV_{0}(Q_{\nu
};\mathbb{R}^{m})\cap L^{\infty}(Q_{\nu};\mathbb{R}^{m}))\times W^{1,1}(Q_{\nu};\mathbb{R}^{d}): \right.}
\\
\\
&\displaystyle{(v(y), u(y))= (a,c) \hbox{ if } y \cdot\nu=\frac
{1}{2},  (v(y), u(y))= (b,d) \hbox{ if } y\cdot\nu=-\frac{1}{2},}
\\
\\
&\displaystyle{\left. (v, u) \hbox{ are } 1-\hbox{periodic in }\nu_{1}, \dots,
\nu_{N-1} \hbox{ directions} \right\},}
\end{array}
\end{equation}
with $\left\{  \nu_{1},\nu_{2},\dots,\nu_{N-1},\nu\right\}  $ an
orthonormal basis of $\mathbb{R}^{N}.$

In the following  we present the main result.
\begin{theorem}
\label{mainthmgen} Let $\Omega\subset\mathbb{R}^{N}$ be a bounded open set and
let $f:\mathbb{R}^{m} \times\mathbb{R}^{d \times N}\to[0, +\infty[$ be a
function satisfying $(F_{1})- (F_{4})$ and $g:\mathbb{R}^{m} \times
\mathbb{R}^{m} \times S^{N-1}\to[0, +\infty[$ satisfying $(G_{1})-(G_{3})$.
Let $F$ be the functional defined in \eqref{FG}. Then for every $(v,u) \in
L^{1}(\Omega;\mathbb{R}^{m})\times L^{1}(\Omega;\mathbb{R}^{d}) $
\[
\mathcal{F}(v,u; \Omega)=\left\{
\begin{array}
[c]{ll}%
\overline{F_{0}}(v,u;\Omega) & \hbox{ if }(v,u)\in SBV_{0}(\Omega;\mathbb{R}^{m}) \times
BV(\Omega;\mathbb{R}^{d}),\\
\\
+\infty & \hbox{ otherwise.}
\end{array}
\right.
\]

\end{theorem}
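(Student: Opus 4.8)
The plan is to carry out the blow-up / integral-representation argument that is by now standard for relaxation of linear-growth functionals on $BV$ (in the vein of Ambrosio--Dal Maso, Fonseca--M\"uller and Barroso--Bouchitt\'e--Buttazzo--Fonseca), viewing the pair $(v,u)$ as one $BV$ field whose Cantor part is carried only by the $u$-component and whose jump set is $J_{(v,u)}=J_v\cup J_u$ up to $\mathcal H^{N-1}$-negligible sets. \textbf{The $+\infty$ alternative} comes first: if $(v,u)\in L^1\times L^1$ and $\mathcal F(v,u;\Omega)<+\infty$, fix competitors $(v_n,u_n)$ with uniformly bounded energy; by $(F_2)$, $\sup_n\int_\Omega|\nabla u_n|<+\infty$, so $u_n$ is bounded in $BV$ and $u\in BV(\Omega;\mathbb R^d)$; by $(G_2)$, $\sup_n\bigl(|Dv_n|(\Omega)+\mathcal H^{N-1}(J_{v_n})\bigr)<+\infty$, and since $\nabla v_n\equiv 0$, $D^cv_n\equiv 0$, $v_n\to v$ in $L^1$, Ambrosio's $SBV$ closure and lower-semicontinuity theorems give $v\in SBV_0(\Omega;\mathbb R^m)$. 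Hence $\mathcal F(v,u;\Omega)=+\infty$ off $SBV_0\times BV$, and from here on $(v,u)\in SBV_0(\Omega;\mathbb R^m)\times BV(\Omega;\mathbb R^d)$.

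\textbf{Lower bound.} I first show that $A\mapsto\mathcal F(v,u;A)$ is the restriction to $\mathcal A(\Omega)$ of a finite Radon measure, absolutely continuous with respect to $\lambda:=\mathcal L^N+|D^cu|+\mathcal H^{N-1}\res J_{(v,u)}$; this is used in both halves of the proof. Locality and inner regularity are routine, the bound $\mathcal F(v,u;A)\le C\bigl(\mathcal L^N(A)+|Du|(A)+|Dv|(A)+\mathcal H^{N-1}(J_v\cap A)\bigr)$ follows from $(F_2),(G_2)$ using $v_n\equiv v$ and $u_n\to u$ strictly in $BV$, and the crucial nested subadditivity is obtained from the De Giorgi--Letta criterion via a cut-off argument: given near-optimal sequences on $A'\Subset A$ and on $\Omega\setminus\overline{A''}$ one interpolates $u_n$ and $v_n$ across a thin annulus, the interpolation of $v_n$ being done (slicing into finitely many layers and keeping a good one) so as not to create interfaces of more than a controlled $\mathcal H^{N-1}$-measure -- here the $SBV_0$ structure and $(G_2)$ are essential. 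Writing $\mu:=\mathcal F(v,u;\cdot)$ and invoking Radon--Nikodym, it then remains to bound below the three densities of $\mu$ by blow-up. At $\mathcal L^N$-a.e.\ $x_0$ (a Lebesgue point of $\nabla u$ and an approximate continuity point of $v$) a rescaled near-optimal sequence, together with $(F_3)$ and a truncation/equi-integrability argument on $\{|\nabla u_n|\}$ allowing $f(v_n,\cdot)$ to be replaced by $f(v(x_0),\cdot)$, and with lower semicontinuity of quasiconvex integrals, gives $\tfrac{d\mu}{d\mathcal L^N}(x_0)\ge Qf(v(x_0),\nabla u(x_0))$. At $|D^cu|$-a.e.\ $x_0$ the analogous blow-up, using $(F_4)$ to pass to $Qf^\infty$ and $(F_3)$ (again with a truncation) to freeze $v$ at $v(x_0)$, the interfacial energy of $v_n$ being simply discarded, gives $\tfrac{d\mu}{d|D^cu|}(x_0)\ge Qf^\infty\!\bigl(v(x_0),\tfrac{dD^cu}{d|D^cu|}(x_0)\bigr)$. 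At $\mathcal H^{N-1}$-a.e.\ $x_0\in J_{(v,u)}$, the blow-up on cubes $Q_\nu(x_0,r)$ produces in the limit a pair admissible for the cell problem defining $K_3(v^+(x_0),v^-(x_0),u^+(x_0),u^-(x_0),\nu(x_0))$ -- the rescaled competitors converge to a two-state profile with the correct one-sided traces, the boundary and periodicity constraints of $\mathcal A_3$ being recovered by the usual fattening of the traces near $\{y\cdot\nu=\pm\tfrac12\}$ -- whence $\tfrac{d\mu}{d\mathcal H^{N-1}\res J_{(v,u)}}(x_0)\ge K_3(v^+,v^-,u^+,u^-,\nu)$. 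Summing, $\mathcal F\ge\overline{F_0}$.

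\textbf{Upper bound.} Since $\mathcal F(v,u;\cdot)$ and $\overline{F_0}(v,u;\cdot)$ are both Radon measures, it suffices, for $\delta>0$, to construct $(v_n,u_n)$ with $\limsup_n\bigl(\int_\Omega f(v_n,\nabla u_n)\,dx+\int_{J_{v_n}}g(v_n^+,v_n^-,\nu_{v_n})\,d\mathcal H^{N-1}\bigr)\le\overline{F_0}(v,u;\Omega)+\delta$, and this is done by superposing a ``diffuse'' construction and a ``surface'' one over a fine partition. For the diffuse part one uses that $v$ is constant on the cells $E_i$ of a Caccioppoli partition and, keeping $v_n\equiv v$, relaxes on each cell the autonomous quasiconvex linear-growth functional $u\mapsto\int f(c_i,\nabla u)\,dx$ (classical relaxation theorem, whose hypotheses are covered by $(F_1),(F_2),(F_4)$), obtaining $u_n\to u$ in $L^1$ with diffuse limit $\int Qf(v,\nabla u)\,dx+\int Qf^\infty(v,\tfrac{dD^cu}{d|D^cu|})\,d|D^cu|$ away from $J_{(v,u)}$, the finitely many cell contributions being patched with transition-layer errors controlled by $(F_3)$. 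For the surface part, around $\mathcal H^{N-1}$-a.e.\ $x_0\in J_{(v,u)}$ one inserts on a small cube $Q_\nu(x_0,r)$ a rescaled near-optimal competitor for $K_3(v^+(x_0),v^-(x_0),u^+(x_0),u^-(x_0),\nu(x_0))$ taken from $\mathcal A_3$, matched to $(v,u)$ on $\partial Q_\nu(x_0,r)$; a Vitali cover of $J_{(v,u)}$ by such cubes, with $(G_3)$ making the construction insensitive to the orientation of $\nu$ and $(F_3)$ estimating the mismatches, produces the surface sequence. A diagonalization over the two families, controlled by the measure property, gives $\mathcal F\le\overline{F_0}$; together with the lower bound this proves the theorem, and Theorem \ref{mainthm} is then recovered as the particular case $m=1$, $v=\chi_E$, $f=V$ with $g$ chosen so that its jump integral is the perimeter.

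\textbf{Main obstacle.} The real difficulty is the coupling at the jump set, which pervades the subadditivity step and the surface blow-up: one must handle \emph{two} interacting fields simultaneously -- $v$, carrying only interfacial energy, and $u$, carrying only gradient energy in the approximating sequences yet free to jump in the limit -- so that the blow-up limit is genuinely admissible for $\mathcal A_3$ with no loss of energy, and so that the cut-off in the De Giorgi--Letta argument respects the $SBV_0$ constraint while not overcounting interface. A secondary, more technical, point is the truncation/equi-integrability estimate controlling $\int L|v_n-v(x_0)|(1+|\nabla u_n|)$ in the bulk and Cantor blow-ups, where $v_n\to v(x_0)$ only in $L^1$ while $\{|\nabla u_n|\}$ may concentrate.
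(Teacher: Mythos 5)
Your overall architecture mirrors the paper's: split off the $+\infty$ alternative, prove the lower bound by Besicovitch differentiation and blow-up at $\mathcal L^N$-, $|D^cu|$- and $\mathcal H^{N-1}\lfloor J_{(v,u)}$-generic points, and prove the upper bound by exploiting that both sides are Radon measures. The lower bound you sketch is essentially the paper's, up to a cosmetic reorganization: you differentiate $\mathcal F(v,u;\cdot)$ directly and so need the De Giorgi--Letta measure property (the paper's Lemma \ref{measure}) \emph{before} the lower bound, whereas the paper only needs it for the upper half, blowing up instead the weak-$*$ limit $\mu$ of the competitor measures $\mu_n$ and using $\mathcal F(v,u;\Omega)\ge\mu(\Omega)$. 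Either route works; the substance of the three blow-up estimates is the same as the paper's Theorem \ref{lsctheorem}.

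The gap is in the bulk and Cantor parts of the upper bound. You propose to relax $u\mapsto\int f(v,\nabla u)\,dx$ cell by cell on the Caccioppoli partition $\{E_i\}$ of $v$ and then patch. Three problems. First, that partition is in general countably infinite, so ``finitely many cell contributions'' is incorrect unless you first truncate $v$ to finitely many values, which requires a continuity-in-$v$ estimate for both $\mathcal F$ and $\overline{F_0}$ that you never supply. Second, the $E_i$ are merely sets of finite perimeter, not open sets, so the ``classical relaxation theorem'' you invoke is not applicable to them as stated. Third, and most seriously, you give no mechanism for keeping the gradient concentration across the patching layers under control: a cut-off $\varphi_\delta$ near $\partial^*E_i\cap\partial^*E_j$ produces a term $\tfrac{1}{\delta}|u_n^{(i)}-u_n^{(j)}|$ in the gradient, and $(F_3)$ controls the $q$-dependence of $f$, not this concentration, while the layers shrink onto $J_v$ --- exactly the set whose $\mathcal H^{N-1}$-measure your jump term must account for separately. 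The paper sidesteps all of this by freezing $v$ and invoking the Global Method of Bouchitt\'e--Fonseca--Mascarenhas for the auxiliary functional $\mathcal G(u;A):=\inf\{\liminf_n\int_A Qf(v,\nabla u_n)\,dx: u_n\to u \text{ in } L^1\}$, which yields an integral representation even though $x\mapsto Qf(v(x),\cdot)$ is only measurable; one then differentiates $\mathcal F\le\mathcal G+(1+[v])\mathcal H^{N-1}\lfloor J_v$ at $\mathcal L^N$- and $|D^cu|$-a.e.\ $x_0$, using Lemma \ref{Lemma0} to commute the $\limsup_{\varepsilon\to 0^+}$ with the infimum in the Global Method cell formula, and no patching whatever is needed. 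You need this or an equivalent device to handle the discontinuous $v$-dependence. Your Vitali-covering treatment of the jump is likewise a departure from the paper's two-state $\to$ countably-valued $\to$ $L^\infty$ progression (following AMT); it is not unworkable, but the cube-boundary matching of the inserted $\mathcal A_3$-minimizers, the residual uncovered portion of $J_{(v,u)}$, and the passage to Borel sets via the upper semicontinuity of $K_3$ (Proposition \ref{propK3}) are left entirely to the reader.
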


The paper is organized as follows. Section \ref{pre} is devoted to preliminary results dealing with functions of bounded variation, perimeters and special functions of bounded variation which are piecewise constant. The properties of the energy densities and several auxiliary results involved in the proofs of representation Theorems \ref{mainthm} and \ref{mainthmgen} are discussed in Section \ref{auxres}. The proof of the lower bound for ${\cal F}$ in \eqref{calFG} is presented in Sections \ref{lb}, while Section \ref{ub} contains the upper bound and the proof of Theorem \ref{mainthmgen}. The applications to optimal design problems as in \cite{AB} and the comparison with previous related relaxation results as in \cite{FM2},  such as Theorem \ref{mainthm}, are discussed in Section \ref{appl}.

%{\bf in my opinion it is still missing in the introduction some more details about applications and some bibliographic info about related %topics.}

\section{Preliminaries}\label{pre}

We give a brief survey of functions of bounded variation and sets of finite perimeter.

In the following $\Omega\subset\mathbb{R}^{N}$ is an open bounded set and we
denote by $\mathcal{A}\left(  \Omega\right)  $ the family of all open subsets of
$\Omega$. The $N$-dimensional Lebesgue measure is designated as
$\mathcal{L}^{N}$, while $\mathcal{H}^{N-1}$ denotes the $\left(  N-1\right)
$-dimensional Hausdorff measure. The unit cube in $\mathbb{R}^{N}$, $\left(
-\frac{1}{2},\frac{1}{2}\right)  ^{N}$, is denoted by $Q$ and we set $Q\left(
x_{0},\varepsilon\right)  :=x_{0}+\varepsilon Q$ for $\varepsilon>0$. For every $\nu \in S^{N-1}$ we
define $Q_{\nu}:=R_{\nu}\left(  Q\right)  $, where $R_{\nu}$ is a rotation
such that $R_{\nu}\left(  e_{N}\right)  =\nu$. The constant $C$ may vary from
line to line.

\label{perimeterBV}

%If $w\in L_{\mathrm{loc}}^{1}(\Omega;\mathbb{R}^{d})$, the variation
%$V(w;\Omega)$ of $w\in\Omega$ is defined by
%\begin{equation}
%V(w;\Omega):=\sup\left\{  \sum_{\alpha=1}^{d}\int_{\Omega}w^{\alpha
%}\operatorname*{div}\varphi^{\alpha}dx:\varphi\in C_{c}^{1}(\Omega
%;\mathbb{R}^{d}),\Vert\varphi\Vert_{L^{\infty}}\leq1\right\}  . \label{TV}%
%\end{equation}

We denote by $\mathcal{M}(\Omega)$ the space of all signed Radon measures in
$\Omega$ with bounded total variation. By the Riesz Representation Theorem,
$\mathcal{M}(\Omega)$ can be identified to the dual of the separable space
$\mathcal{C}_{0}(\Omega)$ of continuous functions on $\Omega$ vanishing on the
boundary $\partial\Omega$. If $\lambda\in\mathcal{M}(\Omega)$ and $\mu
\in\mathcal{M}(\Omega)$ is a nonnegative Radon measure, we denote by
$\frac{d\lambda}{d \mu}$ the Radon-Nikod\'{y}m derivative of $\lambda$ with respect
to $\mu$. 

%By a generalization of the Besicovich Differentiation Theorem
%(see \cite[Proposition 2.2]{Ambrosio-Dal Maso}), it can be proved that there
%exists a Borel set $E\subset\Omega$ such that $\lambda(E)=0$ and
%\begin{equation}
%\frac{d\mu}{d\lambda}(x)=\lim_{\rho\rightarrow0^{+}}\frac{\mu(x+\rho
%\,C)}{\lambda(x+\rho\,C)}\ \text{ for all }x\in\mathrm{Supp}\mu\setminus E
%\label{Thm2.7FMr}%
%\end{equation}
%and for any open convex set $C$ containing the origin. Recall that the set
%$E$ is independent of $C$.

The following version of Besicovitch Differentiation Theorem was proven by Ambrosio and Dal Maso \cite[Proposition 2.2]{Ambrosio-Dal Maso}.
\begin{theorem}\label{thm2.6BBBF}
If $\lambda$ and $\mu$ are Radon measures in $\Omega$, $\mu \geq 0$, then there exists a Borel measure set $E \subset \Omega$ such that $\mu(E)=0$, and for every $x \in {\rm supp}\mu-E$
$$
\displaystyle{\frac{d \lambda}{d \mu}(x):= \lim_{\e \to 0^+}\frac{\lambda (x+ \e C)}{\mu (x+ \e C)}}
$$
exists and is finite whenever $C$ is a bounded, convex, open set containing the origin.
\end{theorem}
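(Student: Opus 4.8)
\noindent\textit{Proof strategy.} The plan is to split $\lambda$ into its absolutely continuous and singular parts with respect to $\mu$ and to reduce the statement to two ``density'' assertions that hold, with a \emph{single} $\mu$-negligible exceptional set, simultaneously for all admissible $C$. First, by the Jordan decomposition $\lambda=\lambda^{+}-\lambda^{-}$ and additivity of all the quotients involved, it suffices to treat $\lambda\geq 0$. Write the Lebesgue decomposition $\lambda=h\mu+\lambda^{s}$ with $h:=\frac{d\lambda}{d\mu}\in L^{1}_{\mathrm{loc}}(\mu)$, $h\geq 0$, $\lambda^{s}\perp\mu$, and fix a Borel set $S\subset\Omega$ with $\mu(S)=0$ and $\lambda^{s}(\Omega\setminus S)=0$. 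For $x\in{\rm supp}\,\mu$ (so that $\mu(x+\e C)>0$ for every $\e>0$, since $x+\e C$ is an open neighbourhood of $x$) one has the elementary bound
\[
\Bigl|\frac{\lambda(x+\e C)}{\mu(x+\e C)}-h(x)\Bigr|\ \leq\ \frac{1}{\mu(x+\e C)}\int_{x+\e C}|h(y)-h(x)|\,d\mu(y)\ +\ \frac{\lambda^{s}(x+\e C)}{\mu(x+\e C)} .
\]
Hence it is enough to produce a Borel set $E\subset\Omega$ with $\mu(E)=0$ such that, for every $x\in{\rm supp}\,\mu\setminus E$ and every bounded convex open $C\ni 0$, (a) $x$ is a $\mu$-Lebesgue point of $h$ relative to the family $\{x+\e C\}_{\e>0}$, and (b) the upper $C$-density of $\lambda^{s}$ with respect to $\mu$ at $x$ vanishes. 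Then the right-hand side above tends to $0$, the limit exists and equals $h(x)$, which is finite once we add to $E$ the $\mu$-null set $\{h=+\infty\}$; in particular it defines $\frac{d\lambda}{d\mu}(x)$ independently of $C$.

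To handle uncountably many $C$ at once, I would fix a countable family $\mathcal P$ of open convex polytopes with vertices in $\mathbb Q^{N}$ and $0$ in their interior which is \emph{sandwich--dense}: for every bounded convex open $C\ni 0$ and every $\delta\in(0,1)$ there exist $P_{-},P_{+}\in\mathcal P$ with $(1-\delta)\,C\subseteq P_{-}\subseteq C\subseteq P_{+}\subseteq(1+\delta)\,C$ (a routine polytope approximation which uses only that $0\in\mathrm{int}\,C$). For each fixed $P\in\mathcal P$ the family $\{y+\e P:y\in\Omega,\ \e>0\}$ consists of homothetic copies of the fixed convex body $P$, hence satisfies a Besicovitch--Morse covering theorem with overlap constant depending only on $P$. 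Running the classical differentiation argument verbatim for this family — a weak-$(1,1)$ estimate for the maximal operator $g\mapsto\sup_{\e>0}\mu(x+\e P)^{-1}\int_{x+\e P}|g|\,d\mu$ together with density of continuous functions in $L^{1}(\mu)$ for~(a), and a covering estimate for the superlevel sets $\{x:\limsup_{\e\to0^{+}}\lambda^{s}(x+\e P)/\mu(x+\e P)>t\}$ combined with the outer regularity of $\lambda^{s}$ on $\Omega\setminus S$ for~(b) — produces a $\mu$-null Borel set $E_{P}$ off which (a) and (b) hold with $C$ replaced by $P$. Then $E:=\{h=+\infty\}\cup\bigcup_{P\in\mathcal P}E_{P}$ is still a $\mu$-null Borel set.

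It remains to pass from $\mathcal P$ to an arbitrary $C$. Given $x\in{\rm supp}\,\mu\setminus E$ and $\delta\in(0,1)$, choose $P_{-},P_{+}\in\mathcal P$ as above; from $x+\e P_{-}\subseteq x+\e C\subseteq x+\e P_{+}$ one estimates $\lambda^{s}(x+\e C)\leq\lambda^{s}(x+\e P_{+})$ and $\int_{x+\e C}|h-h(x)|\,d\mu\leq\int_{x+\e P_{+}}|h-h(x)|\,d\mu$ from above, while $\mu(x+\e C)\geq\mu(x+\e P_{-})=\mu(x+\e P_{+})-\mu\bigl(x+\e(P_{+}\setminus P_{-})\bigr)$ from below. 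Since both upper bounds are $o\bigl(\mu(x+\e P_{+})\bigr)$ as $\e\to 0^{+}$, the whole quotient tends to $0$ as soon as the ``thin shell'' $x+\e(P_{+}\setminus P_{-})$ does not asymptotically absorb a definite fraction of $\mu(x+\e P_{+})$. This comparison is, I expect, \textbf{the only non-routine point of the proof}: because $\mu$ is not assumed doubling, one cannot a priori bound $\mu(x+\e P_{+})/\mu(x+\e P_{-})$, and one must show that the set of $x$ at which these ratios of nested homothetic polytopes degenerate (after the appropriate joint passage with $\delta$ and $\e$) is $\mu$-negligible; this is again obtained by a Besicovitch covering argument for the family $\mathcal P$, and it is where the bulk of the work lies. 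Once this is in place, the displayed inequality yields, for every $x\in{\rm supp}\,\mu\setminus E$ and every bounded convex open $C\ni 0$, that $\lim_{\e\to 0^{+}}\lambda(x+\e C)/\mu(x+\e C)=h(x)\in[0,\infty)$, which recovers $\frac{d\lambda}{d\mu}$ and is exactly the assertion.
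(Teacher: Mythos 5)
The paper does not prove Theorem~\ref{thm2.6BBBF}: it is quoted from Ambrosio and Dal Maso \cite[Proposition 2.2]{Ambrosio-Dal Maso}, so there is no internal argument against which to compare. Your overall program — reduce to $\lambda\geq0$, use the Lebesgue decomposition $\lambda=h\mu+\lambda^{s}$, reduce to a Lebesgue-point claim for $h$ plus a vanishing-density claim for $\lambda^{s}$, and turn ``all $C$'' into ``countably many $C$'' through a sandwich-dense family $\mathcal P$ of rational polytopes — is a sensible reconstruction and you correctly flag the one genuinely delicate step.

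The proposed way of closing that step, however, does not work. What you need, after the sandwiching, is that the thin shell $x+\e(P_{+}\setminus P_{-})$ does not carry a definite fraction of $\mu(x+\e P_{+})$ as $\e\to0^{+}$; equivalently, that $\limsup_{\e\to0^{+}}\mu(x+\e P_{+})/\mu(x+\e P_{-})<\infty$ for $\mu$-a.e.\ $x$, possibly after sending $\delta\to0$. This is a doubling-type condition, and for a general Radon measure $\mu$ the set where it fails is \emph{not} $\mu$-negligible: one can build non-doubling measures (for instance via dyadic mass-splitting schemes with splitting ratios $\alpha_{n}\to0$ so that, by a second Borel--Cantelli argument, $\mu$-a.e.\ $x$ falls into the ``small'' child infinitely often) for which $\limsup_{r\to0^{+}}\mu(B(x,cr))/\mu(B(x,r))=\infty$ for every $c>1$ and $\mu$-a.e.\ $x$, and the $\delta\to0$ passage does not repair this because the $\e$-$\limsup$ is already infinite for each fixed $\delta$. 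Since the theorem is true without any doubling hypothesis, the covering argument you allude to must control a different quantity: one should prove, for each fixed pair $(P_{-},P_{+})\in\mathcal P^{2}$, the mismatched vanishing-density statement $\limsup_{\e\to0^{+}}\lambda^{s}(x+\e P_{+})/\mu(x+\e P_{-})=0$ for $\mu$-a.e.\ $x$ (and the analogous mismatched weak-$(1,1)$ maximal estimate for the Lebesgue-point part) directly, using outer regularity of $\lambda^{s}$ to localise the numerator, rather than trying to compare the two $\mu$-masses. Even this is subtler than a routine Besicovitch application, because applying Besicovitch to the $P_{-}$-family and then enlarging each set to the corresponding $P_{+}$-copy need not preserve bounded overlap when the scales $\e_{i}$ vary wildly; the selection has to be arranged with some care (as in Besicovitch's original treatment and in \cite{Ambrosio-Dal Maso}). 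In short: the gap you identify is real, but the fix you sketch targets the wrong quantity and rests on a doubling-type statement that is false in general.
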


We recall that the exceptional set E above does not depend on C. An immediate corollary is the generalization of Lebesgue-Besicovitch Differentiation
Theorem given below.
\begin{theorem}\label{thm2.8FM2}
If $\mu$ is a nonnegative Radon measure and if $f \in L^1_{\rm loc}(\mathbb R^N,\mu)$ then
$$
\lim_{\e \to 0^+} \frac{1}{\mu(x+ \e C)}\int_{x+ \e C} | f(y) - f ( x ) | d\mu(y) =0
$$
for $\mu$- a.e. $ x\in \mathbb R^N$ and for every, bounded, convex, open set $C$ containing the
origin.
\end{theorem}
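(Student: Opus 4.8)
\textbf{Proof plan for Theorem \ref{thm2.8FM2}.}

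The plan is to derive this $L^1$-type differentiation statement from the pointwise Besicovitch differentiation result of Theorem \ref{thm2.6BBBF} by a standard ``countable dense family'' argument. First I would fix a countable dense subset $\{c_k\}_{k\in\mathbb{N}}$ of $\mathbb{R}$ (or of $\mathbb{C}$, or of the relevant scalar field) and, for each $k$, apply Theorem \ref{thm2.6BBBF} to the pair of Radon measures $\lambda_k := |f-c_k|\,\mu$ and $\mu$. Note that $|f-c_k|\in L^1_{\rm loc}(\mathbb{R}^N,\mu)$ by the triangle inequality and $f\in L^1_{\rm loc}$, so $\lambda_k$ is a well-defined (nonnegative, locally finite) Radon measure with $\frac{d\lambda_k}{d\mu}(x) = |f(x)-c_k|$ for $\mu$-a.e.\ $x$, by the Radon--Nikod\'ym theorem together with the absolute continuity $\lambda_k \ll \mu$. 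Theorem \ref{thm2.6BBBF} then yields, for each $k$, an exceptional set $E_k$ with $\mu(E_k)=0$ such that for every $x\in{\rm supp}\,\mu\setminus E_k$ and every bounded convex open $C\ni 0$,
\[
\lim_{\e\to 0^+}\frac{1}{\mu(x+\e C)}\int_{x+\e C}|f(y)-c_k|\,d\mu(y) = |f(x)-c_k|.
\]
Crucially, the exceptional set in Theorem \ref{thm2.6BBBF} does not depend on $C$ (as the excerpt emphasizes right after its statement), so a single $E_k$ works simultaneously for all $C$.

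Next I would set $E := \bigcup_k E_k \cup (\mathbb{R}^N\setminus{\rm supp}\,\mu)$, which still has $\mu$-measure zero since $\mu(\mathbb{R}^N\setminus{\rm supp}\,\mu)=0$ and a countable union of $\mu$-null sets is $\mu$-null. Fix $x\notin E$ and a bounded convex open $C\ni 0$; I would also fix $\delta>0$ and choose $k$ with $|f(x)-c_k|<\delta$. Then for every $y$ the triangle inequality gives $|f(y)-f(x)| \le |f(y)-c_k| + |c_k-f(x)| \le |f(y)-c_k| + \delta$, hence
\[
\frac{1}{\mu(x+\e C)}\int_{x+\e C}|f(y)-f(x)|\,d\mu(y) \le \frac{1}{\mu(x+\e C)}\int_{x+\e C}|f(y)-c_k|\,d\mu(y) + \delta .
\]
Letting $\e\to 0^+$ and using the displayed limit for this particular $k$ gives $\limsup_{\e\to0^+}(\cdots) \le |f(x)-c_k| + \delta < 2\delta$. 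Since $\delta>0$ is arbitrary, the $\limsup$ is $0$, which is exactly the claimed limit (the quantity is nonnegative, so $\limsup=0$ forces the limit to exist and equal $0$). This holds for every $x\notin E$, i.e.\ for $\mu$-a.e.\ $x$, and for every admissible $C$.

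The only point requiring a little care — and the main ``obstacle,'' though it is minor — is the well-definedness and positivity of the denominator $\mu(x+\e C)$ for small $\e$: for $x\in{\rm supp}\,\mu$ and $C$ an open neighbourhood of $0$, the set $x+\e C$ is an open neighbourhood of $x$, so $\mu(x+\e C)>0$ for every $\e>0$, making all the averages legitimate; this is implicitly why Theorem \ref{thm2.6BBBF} restricts attention to ${\rm supp}\,\mu$. One should also confirm that $|f-c_k|\,\mu$ is genuinely a Radon measure in the sense required (finite on compact sets), which follows from $f\in L^1_{\rm loc}(\mathbb{R}^N,\mu)$. Everything else is the routine $\e/\delta$ bookkeeping sketched above; no compactness or covering arguments beyond what is already packaged inside Theorem \ref{thm2.6BBBF} are needed.
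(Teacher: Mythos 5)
Your proof is correct, and it is exactly the standard derivation of this corollary (the paper itself gives no proof, simply calling it ``an immediate corollary'' of Theorem \ref{thm2.6BBBF}); the countable dense family $\{c_k\}$, the measures $\lambda_k=|f-c_k|\mu$, the union of exceptional sets, and the final $\delta$-approximation of $f(x)$ by some $c_k$ are precisely the classical argument, and the crucial point you single out — that the exceptional set in Theorem \ref{thm2.6BBBF} is independent of $C$ — is indeed what makes the conclusion hold for all admissible $C$ simultaneously. The only bookkeeping detail worth making explicit is that Theorem \ref{thm2.6BBBF} identifies the limit with $\tfrac{d\lambda_k}{d\mu}(x)$, which agrees with $|f(x)-c_k|$ only for $\mu$-a.e.\ $x$; so each $E_k$ should be enlarged by the (still $\mu$-null) set where that equality fails before taking the union. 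With that tiny addition the argument is complete.
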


\begin{definition}
A function $w\in L^{1}(\Omega;{\mathbb{R}}^{d})$ is said to be of
\emph{bounded variation}, and we write $w\in BV(\Omega;{\mathbb{R}}^{d})$, if
all its first distributional derivatives $D_{j}w_{i}$ belong to $\mathcal{M}%
(\Omega)$ for $1\leq i\leq d$ and $1\leq j\leq N$.
\end{definition}

The matrix-valued measure whose entries are $D_{j}w_{i}$ is denoted by $Dw$
and $|Dw|$ stands for its total variation.
We observe that if $w\in BV(\Omega;\mathbb{R}^{d})$ then $w\mapsto|Dw|(\Omega)$ is lower
semicontinuous in $BV(\Omega;\mathbb{R}^{d})$ with respect to the
$L_{\mathrm{loc}}^{1}(\Omega;\mathbb{R}^{d})$ topology.

We briefly recall some facts about functions of bounded variation. For more
details we refer the reader to \cite{AFP2}, \cite{EG}, \cite{G} and \cite{Z}.

%\begin{proposition}
%\label{VariationofBVfunctions} A function $w\in L^{1}(\Omega;\mathbb{R}^{d})$
% belongs to $BV(\Omega;\mathbb{R}^{d})$ if and only if $V(w;\Omega
%)<+\infty$. In addition, $V(w;\Omega)$ coincides with $|Dw|(\Omega)$ for any
%

%\end{proposition}

\begin{definition}
\label{def3.14AFPstrict} Let $w, w_{n} \in BV(\Omega;\mathbb{R}^{d})$. The
sequence $\{w_{n}\}$ strictly converges in $BV(\Omega;\mathbb{R}^{d})$ to $w$
if $\{w_{n}\}$ converges to $w$ in $L^{1}(\Omega;\mathbb{R}^{d})$ and $\{|D
w_{n}|(\Omega)\}$ converges to $|D w|(\Omega)$ as $n \to\infty$.
\end{definition}

\begin{definition}
Given $w\in BV\left(  \Omega;\mathbb{R}^{d}\right)  $ the \emph{approximate
upper}\textit{\ }\emph{limit }and the \emph{approximate lower limit} of each
component $w^{i}$, $i=1,\dots,d$, are defined by
\[
\left(  w^{i}\right)  ^{+}\left(  x\right)  :=\inf\left\{  t\in\mathbb{R}%
:\,\lim_{\varepsilon\rightarrow0^{+}}\frac{\mathcal{L}^{N}\left(  \left\{
y\in\Omega\cap Q\left(  x,\varepsilon\right)  :\,w^{i}\left(  y\right)
>t\right\}  \right)  }{\varepsilon^{N}}=0\right\}
\]
and
\[
\left(  w^{i}\right)  ^{-}\left(  x\right)  :=\sup\left\{  t\in\mathbb{R}%
:\,\lim_{\varepsilon\rightarrow0^{+}}\frac{\mathcal{L}^{N}\left(  \left\{
y\in\Omega\cap Q\left(  x,\varepsilon\right)  :\,w^{i}\left(  y\right)
<t\right\}  \right)  }{\varepsilon^{N}}=0\right\}  ,
\]
respectively. The \emph{jump set}\textit{\ }of $w$ is given by
\[
J_{w}:=\bigcup_{i=1}^{d}\left\{  x\in\Omega:\,\left(  w^{i}\right)
^{-}\left(  x\right)  <\left(  w^{i}\right)  ^{+}\left(  x\right)  \right\}
.
\]

\end{definition}

It can be shown that $J_{w}$ and the complement of the set of Lebesgue points
of $w$ differ, at most, by a set of $\mathcal{H}^{N-1}$ measure zero.
Moreover, $J_{w}$ is $\left(  N-1\right)  $-rectifiable, i.e., there are
$C^{1} $ hypersurfaces $\Gamma_{i}$ such that 
$\mathcal{H}^{N-1}\left(  J_{w}\setminus\cup_{i=1}^{\infty}\Gamma_{i}\right)=0.$

\begin{proposition}\label{thm2.3BBBF}
If $w\in BV\left(  \Omega;\mathbb{R}^{d}\right)  $ then

\begin{enumerate}
\item[i)] for $\mathcal{L}^{N}-$a.e. $x\in\Omega$%
\begin{equation}
\lim_{\varepsilon\rightarrow0^{+}}\frac{1}{\varepsilon}\left\{  \frac
{1}{\mathcal{\varepsilon}^{N}}\int_{Q\left(  x,\varepsilon\right)
}\left\vert w(y)  -w(x)  -\nabla w\left(
x\right)  \cdot( y-x)  \right\vert ^{\frac{N}{N-1}%
}dy\right\}  ^{\frac{N-1}{N}}=0; \label{approximate differentiability}%
\end{equation}

\item[ii)] for $\mathcal{H}^{N-1}$-a.e. $x\in J_{w}$ there exist $w^{+}\left(
x\right)  ,$ $w^{-}\left(  x\right)  \in\mathbb{R}^{d}$ and $\nu\left(
x\right)  \in S^{N-1}$ normal to $J_{w}$ at $x,$ such that
\[
\lim_{\varepsilon\rightarrow0^{+}}\frac{1}{\varepsilon^{N}}\int_{Q_{\nu}%
^{+}\left(  x,\varepsilon\right)  }\left\vert w\left(  y\right)  -w^{+}\left(
x\right)  \right\vert dy=0,\qquad\lim_{\varepsilon\rightarrow0^{+}}\frac
{1}{\varepsilon^{N}}\int_{Q_{\nu}^{-}\left(  x,\varepsilon\right)  }\left\vert
w\left(  y\right)  -w^{-}\left(  x\right)  \right\vert dy=0,
\]

where $Q_{\nu}^{+}\left(  x,\varepsilon\right)  :=\left\{  y\in Q_{\nu}\left(
x,\varepsilon\right)  :\,\left\langle y-x,\nu\right\rangle >0\right\}  $ and
$Q_{\nu}^{-}\left(  x,\varepsilon\right)  :=\left\{  y\in Q_{\nu}\left(
x,\varepsilon\right)  :\,\left\langle y-x,\nu\right\rangle <0\right\}  $;

\item[iii)] for $\mathcal{H}^{N-1}$-a.e. $x\in\Omega\backslash J_{w}$%
\[
\lim_{\varepsilon\rightarrow0^{+}}\frac{1}{\mathcal{\varepsilon}^{N}}%
\int_{Q\left(  x,\varepsilon\right)  }\left\vert w(y)
-w\left(  x\right)  \right\vert dy=0.
\]

\end{enumerate}
\end{proposition}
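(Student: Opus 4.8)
The three assertions have rather different characters, so I would establish them in the order i), iii), ii), with the differentiation theorems recalled above (Theorems~\ref{thm2.6BBBF} and~\ref{thm2.8FM2}) as the principal tools, and with $w(x)$ understood, wherever it makes sense, as the approximate limit of $w$ at $x$. For i), the plan is to combine the Radon--Nikod\'ym decomposition $Dw=\nabla w\,\mathcal{L}^{N}+D^{s}w$, with $D^{s}w$ singular with respect to $\mathcal{L}^{N}$, with the Poincar\'e--Sobolev inequality for $BV$ functions. Theorem~\ref{thm2.8FM2} applied with $\mu=\mathcal{L}^{N}$ and $f=\nabla w\in L^{1}(\Omega;\mathbb{R}^{d\times N})$ shows that $\mathcal{L}^{N}$-a.e.\ $x$ is a Lebesgue point of $\nabla w$; Theorem~\ref{thm2.6BBBF} applied with $\lambda=D^{s}w$ and $\mu=\mathcal{L}^{N}$ shows, since $\tfrac{dD^{s}w}{d\mathcal{L}^{N}}=0$ $\mathcal{L}^{N}$-a.e., that $\varepsilon^{-N}|D^{s}w|(Q(x,\varepsilon))\to0$ for $\mathcal{L}^{N}$-a.e.\ $x$; and $\mathcal{L}^{N}$-a.e.\ $x$ is a Lebesgue point of $w$. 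Fixing such an $x$, letting $m_{\varepsilon}$ be the average of $w$ over $Q(x,\varepsilon)$, and applying the Poincar\'e--Sobolev inequality on $Q(x,\varepsilon)$ to $y\mapsto w(y)-m_{\varepsilon}-\nabla w(x)\cdot(y-x)$ --- whose gradient measure has total variation on $Q(x,\varepsilon)$ bounded by $\int_{Q(x,\varepsilon)}|\nabla w(y)-\nabla w(x)|\,dy+|D^{s}w|(Q(x,\varepsilon))=o(\varepsilon^{N})$, and which has zero average on $Q(x,\varepsilon)$ because that cube is centred at $x$ --- yields $\big(\varepsilon^{-N}\int_{Q(x,\varepsilon)}|w(y)-m_{\varepsilon}-\nabla w(x)\cdot(y-x)|^{N/(N-1)}dy\big)^{(N-1)/N}=o(\varepsilon)$. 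To replace $m_{\varepsilon}$ by $w(x)$ at cost $o(\varepsilon)$ I would run a dyadic telescoping: after subtracting the affine part the same estimate gives $\varepsilon^{-1}|m_{\varepsilon}-m_{\varepsilon/2}|\to0$, whence $m_{\varepsilon}=w(x)+o(\varepsilon)$, and \eqref{approximate differentiability} follows.

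For iii), I would use the structural fact recalled above, that $J_{w}$ and the complement of the set of Lebesgue points of $w$ differ by an $\mathcal{H}^{N-1}$-null set: thus for $\mathcal{H}^{N-1}$-a.e.\ $x\in\Omega\setminus J_{w}$ the point $x$ is a Lebesgue point of $w$, and defining $w(x)$ as the corresponding approximate limit gives precisely $\varepsilon^{-N}\int_{Q(x,\varepsilon)}|w(y)-w(x)|\,dy\to0$. For ii), the plan is a blow-up argument resting on the $(N-1)$-rectifiability of $J_{w}$ and on the decomposition of $Dw$ --- ingredients one may either import from the references cited above or recover via De Giorgi's structure theorem applied to the superlevel sets of the components of $w$. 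At $\mathcal{H}^{N-1}$-a.e.\ $x\in J_{w}$ there is an approximate tangent hyperplane to $J_{w}$ with unit normal $\nu(x)$, the measure $\mathcal{H}^{N-1}\res J_{w}$ blows up at $x$ to the flat $(N-1)$-dimensional measure on that hyperplane, and the jump part of $Dw$ equals $(w^{+}-w^{-})\otimes\nu_{w}\,\mathcal{H}^{N-1}\res J_{w}$. Since the rescalings $w(x+\varepsilon\,\cdot\,)$ are uniformly bounded in $BV(Q_{\nu})$, one extracts an $L^{1}(Q_{\nu})$-limit; controlling the densities at $x$ of the jump part of $Dw$ and of $\mathcal{H}^{N-1}\res J_{w}$ by Theorem~\ref{thm2.6BBBF}, the limit is identified as the function equal to $w^{+}(x)$ on $\{y\cdot\nu>0\}$ and to $w^{-}(x)$ on $\{y\cdot\nu<0\}$, and unravelling the rescaling gives the two one-sided averaged limits over $Q_{\nu}^{+}(x,\varepsilon)$ and $Q_{\nu}^{-}(x,\varepsilon)$ asserted in ii).

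The analytically delicate point is the mean-value adjustment in i): one needs $m_{\varepsilon}-w(x)=o(\varepsilon)$, not merely $O(\varepsilon)$, which is exactly why the affine part must be removed before telescoping. The main conceptual obstacle is ii), which genuinely relies on the fine structure theorem for $BV$ --- the rectifiability of $J_{w}$ together with the precise form of the jump part of $Dw$; once that is in hand the remaining work is to upgrade the density-type blow-up to the averaged $L^{1}$ limits in the stated form, which is routine but requires care in choosing the exceptional null sets compatibly across the three assertions.
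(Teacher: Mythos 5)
The paper offers no proof of Proposition~\ref{thm2.3BBBF}: it is stated in the Preliminaries as a recalled structural theorem, with the reader referred to \cite{AFP2}, \cite{EG}, \cite{G}, \cite{Z} (it is essentially Theorems 3.77, 3.78 and 3.83 of Ambrosio--Fusco--Pallara). So there is nothing in the paper to compare your plan against; what follows is an assessment of the plan on its own terms.

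Parts (i) and (iii) of your sketch are sound and are the standard arguments. For (i), the combination of the Lebesgue-point property of $\nabla w$ (Theorem~\ref{thm2.8FM2} with $\mu=\mathcal{L}^N$), the vanishing of $|D^s w|(Q(x,\varepsilon))/\varepsilon^N$ (Theorem~\ref{thm2.6BBBF} with $\lambda=D^sw$, $\mu=\mathcal{L}^N$), and the scale-invariant Poincar\'e--Sobolev inequality applied to $w(\cdot)-m_\varepsilon-\nabla w(x)(\cdot-x)$ indeed gives the estimate with $m_\varepsilon$ in place of $w(x)$, and you are right that the dyadic telescoping $|m_\varepsilon-m_{\varepsilon/2}|=o(\varepsilon)$ (obtained after removing the affine part) is exactly what upgrades $m_\varepsilon-w(x)$ from $O(\varepsilon)$ to $o(\varepsilon)$. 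For (iii), deducing the claim from the fact that $J_w$ and the complement of the Lebesgue set differ by an $\mathcal{H}^{N-1}$-null set is legitimate, though you should be aware that the fact you invoke is itself a deep theorem (roughly equivalent in strength to (ii)$+$(iii) combined), so (iii) is being \emph{imported} rather than \emph{proved}.

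Part (ii) is where your sketch, read literally, is circular. You list among your ingredients that ``the jump part of $Dw$ equals $(w^+-w^-)\otimes\nu_w\,\mathcal{H}^{N-1}\lfloor J_w$'' and then use it to identify the blow-up limit as the two-valued step function. But the existence of the \emph{vectors} $w^\pm(x)$ and of a single normal $\nu(x)$ is exactly what (ii) asserts --- and, as the paper stresses immediately after the proposition, $(w^i)^\pm\neq(w^\pm)^i$ in general, so the vector-valued traces cannot simply be assembled coordinate-wise. Quoting the decomposition of $D^jw$ therefore presupposes the conclusion. You do gesture at the right repair --- pass to the superlevel sets $\{w^i>t\}$ via the coarea/Fleming--Rishel formula \eqref{FR}, apply De Giorgi's structure theorem to these sets of finite perimeter, and only then reconstruct $w^\pm$ and $\nu$ at $\mathcal{H}^{N-1}$-a.e.\ $x\in J_w$ --- but as written the sketch conflates the means with the end. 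A further small point for a complete write-up: the ``uniformly bounded in $BV(Q_\nu)$'' claim for the rescalings $w(x+\varepsilon\cdot)$ requires restricting to those $x\in J_w$ where the $(N-1)$-density $\varepsilon^{1-N}|Dw|(Q_\nu(x,\varepsilon))$ stays bounded, which holds $\mathcal{H}^{N-1}$-a.e.\ but needs to be stated. Aside from this logical re-ordering in (ii), the overall plan is the correct one.
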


We observe that in the vector-valued case in general $\left(  w^{i}\right)
^{\pm}\neq\left(  w^{\pm}\right)  ^{i}.$ In the sequel $w^{+}$ and $w^{-}$
denote the vectors introduced in $ii)$ above.

Choosing a normal $\nu_{w}\left(  x\right)  $ to $J_{w}$ at $x,$ we denote
the \emph{jump} of $w$ across $J_{w}$ by $\left[  w\right]  :=w^{+}-w^{-}.$
The distributional derivative of $w\in BV\left(  \Omega;\mathbb{R}^{d}\right)
$ admits the decomposition
\[
Dw=\nabla w\mathcal{L}^{N}\lfloor\Omega+\left(  \left[  w\right]  \otimes
\nu_{w}\right)  \mathcal{H}^{N-1}\lfloor J_{w}+D^{c} w,
\]
where $\nabla w$ represents the density of the absolutely continuous part of
the Radon measure $Dw$ with respect to the Lebesgue measure. The
\emph{Hausdorff}, or \emph{jump}, \emph{part} of $Dw$ is represented by
$\left(  \left[  w\right]  \otimes\nu_{w}\right)  \mathcal{H}^{N-1}\lfloor
J_{w}$ and $D^{c} w $ is the \emph{Cantor part} of $Dw$. The measure $D^{c} w
$ is singular with respect to the Lebesgue measure and it is diffuse, i.e.,
every Borel set $B\subset\Omega$ with $\mathcal{H}^{N-1}\left(  B\right)
<\infty$ has Cantor measure zero.

The following result, that will be exploited in the sequel, can be found in \cite[Lemma 2.6]{FM2}.
\begin{lemma}\label{lemma2.5BBBF}
Let $w \in BV(\Omega;\mathbb R^d)$, for ${\cal H}^{N-1}$ a.e. $x$ in $J_w$,
$$
\displaystyle{\lim_{\e \to 0^+} \frac{1}{\e^{N-1}} \int_{J_w \cap  Q_{\nu(x)}(x, \e)} |w^+(y)- w^-(y)| d {\cal H}^{N-1} = |w^+(x)- w^-(x)|.}
$$
\end{lemma}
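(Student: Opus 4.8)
The plan is to prove Lemma~\ref{lemma2.5BBBF} by exploiting the $(N-1)$-rectifiability of the jump set $J_w$ together with the fact that $w^+ - w^-$, restricted to $J_w$, defines an $L^1_{\mathrm{loc}}(\mathcal{H}^{N-1}\lfloor J_w)$ function, and then invoking the Besicovitch-type differentiation theorem (Theorem~\ref{thm2.8FM2}) with respect to the measure $\mu := \mathcal{H}^{N-1}\lfloor J_w$ and the convex open set $C = Q_\nu$. The point is that for $\mathcal{H}^{N-1}$-a.e.\ $x \in J_w$ the blow-up of $J_w$ at $x$ is the hyperplane $\{y : \langle y - x, \nu(x)\rangle = 0\}$, and along this blow-up the rescaled measures $\frac{1}{\e^{N-1}}\mathcal{H}^{N-1}\lfloor(J_w \cap Q_{\nu(x)}(x,\e))$ converge (after the change of variables $y = x + \e z$) to $\mathcal{H}^{N-1}\lfloor(\{z : \langle z,\nu(x)\rangle = 0\} \cap Q_{\nu(x)})$, whose total mass is $1$.

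First I would fix a point $x \in J_w$ that is simultaneously (a) a point where $J_w$ has an approximate tangent plane orthogonal to $\nu(x)$ with unit $(N-1)$-density, i.e.\ $\mathcal{H}^{N-1}(J_w \cap Q_{\nu(x)}(x,\e))/\e^{N-1} \to 1$ — this holds $\mathcal{H}^{N-1}$-a.e.\ on $J_w$ by rectifiability and the structure theorem for rectifiable sets — and (b) a Lebesgue point of the map $y \mapsto |w^+(y) - w^-(y)|$ with respect to $\mu = \mathcal{H}^{N-1}\lfloor J_w$ in the sense of Theorem~\ref{thm2.8FM2} applied with the convex open set $C = Q_{\nu(x)}$. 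Note that by $(G_2)$-type growth, or more elementarily because $w \in BV$, the function $|w^+ - w^-|$ is $\mathcal{H}^{N-1}\lfloor J_w$-integrable on bounded pieces, so it lies in $L^1_{\mathrm{loc}}$ and Theorem~\ref{thm2.8FM2} applies; the exceptional set is $\mathcal{H}^{N-1}$-negligible. The set of $x$ satisfying both (a) and (b) has full $\mathcal{H}^{N-1}$ measure in $J_w$.

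For such an $x$, I would estimate
\[
\left| \frac{1}{\e^{N-1}} \int_{J_w \cap Q_{\nu(x)}(x,\e)} |w^+(y) - w^-(y)|\, d\mathcal{H}^{N-1}(y) - |w^+(x) - w^-(x)| \right|
\]
by inserting and subtracting $\frac{1}{\e^{N-1}} \int_{J_w \cap Q_{\nu(x)}(x,\e)} |w^+(x) - w^-(x)|\, d\mathcal{H}^{N-1}$. The difference of the first term and this quantity is bounded by $\frac{1}{\e^{N-1}} \int_{J_w \cap Q_{\nu(x)}(x,\e)} \big| |w^+(y)-w^-(y)| - |w^+(x)-w^-(x)| \big|\, d\mathcal{H}^{N-1}$, which tends to $0$ as $\e \to 0^+$ by the choice (b) of $x$ as a Lebesgue point (using $\mu(Q_{\nu(x)}(x,\e)) = \mathcal{H}^{N-1}(J_w \cap Q_{\nu(x)}(x,\e)) \sim \e^{N-1}$). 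The remaining term equals $|w^+(x)-w^-(x)| \cdot \frac{\mathcal{H}^{N-1}(J_w \cap Q_{\nu(x)}(x,\e))}{\e^{N-1}}$, which converges to $|w^+(x)-w^-(x)|$ by the density property (a). Combining the two estimates via the triangle inequality yields the claim.

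The main obstacle, and the point requiring the most care, is justifying the unit-density statement (a): that $\mathcal{H}^{N-1}(J_w \cap Q_{\nu(x)}(x,\e)) = \e^{N-1}(1 + o(1))$ for $\mathcal{H}^{N-1}$-a.e.\ $x \in J_w$. This rests on the $(N-1)$-rectifiability of $J_w$ recalled in the excerpt (the $\Gamma_i$ decomposition) and on the classical fact that a rectifiable set has approximate tangent planes $\mathcal{H}^{N-1}$-a.e.\ with density $1$; one must also check that the cube $Q_{\nu(x)}$ (rather than a ball) is used consistently and that the blow-up of $J_w$ fills the tangent hyperplane slice of the cube up to $\mathcal{H}^{N-1}$-null error. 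Once this structural input is in hand, the rest is a routine application of the differentiation theorem, so I would state (a) carefully — perhaps citing the standard reference on rectifiable sets and approximate tangent planes in \cite{AFP2} — and then assemble the short estimate above.
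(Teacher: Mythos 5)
The paper does not prove this lemma itself; it cites \cite[Lemma 2.6]{FM2}. Your proposal reproduces what is essentially the standard argument: rectifiability of $J_w$ gives unit $\mathcal{H}^{N-1}$-density in the $\nu(x)$-aligned cube at $\mathcal{H}^{N-1}$-a.e.\ point, a Lebesgue-point argument for $|w^+-w^-|$ against $\mathcal{H}^{N-1}\lfloor J_w$ supplies the rest, and the triangle-inequality assembly is correct. You also correctly use the fact, recalled in the paper right after Theorem \ref{thm2.6BBBF}, that the exceptional set in the differentiation theorems is independent of the convex set $C$; this is exactly what licenses taking $C = Q_{\nu(x)}$, which varies with $x$.

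There is, however, one technical gap. Theorem \ref{thm2.8FM2} is stated for $\mu$ a nonnegative \emph{Radon} (locally finite) measure, but for a general $w \in BV(\Omega;\mathbb{R}^d)$ the measure $\mathcal{H}^{N-1}\lfloor J_w$ is only $\sigma$-finite and can fail to be locally finite (e.g.\ $w=\sum_j c_j\chi_{B_j}$ with disjoint balls of radii $\rho_j$ and jumps $c_j$ chosen so that $\sum_j c_j\rho_j^{N-1}<\infty$ yet $\sum_j\rho_j^{N-1}=\infty$). So neither Theorem \ref{thm2.8FM2} nor the blanket unit-density statement (a) can be invoked directly for $\mu = \mathcal{H}^{N-1}\lfloor J_w$. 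The routine fix is to localize: set $J_k:=\{x\in J_w : |w^+(x)-w^-(x)|>1/k\}$, note $\mathcal{H}^{N-1}(J_k)\le k\,|Dw|(\Omega)<\infty$ so that $\mathcal{H}^{N-1}\lfloor J_k$ is finite and in particular Radon, apply your steps (a) and (b) with $J_k$ in place of $J_w$, and then control the residual contribution of $J_w\setminus J_k$ by observing that $|w^+-w^-|\,\mathcal{H}^{N-1}\lfloor(J_w\setminus J_k)$ is a finite measure mutually singular with $\mathcal{H}^{N-1}\lfloor J_k$, whence by Theorem \ref{thm2.6BBBF} its $(N-1)$-dimensional density vanishes at $\mathcal{H}^{N-1}$-a.e.\ point of $J_k$. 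Taking the union over $k$ yields the statement for $\mathcal{H}^{N-1}$-a.e.\ $x\in J_w$. With this localization incorporated, your argument is complete and matches the standard one.
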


In the following we give some preliminary notions related with sets of finite
perimeter. For a detailed treatment we refer to \cite{AFP2}.

\begin{definition}
\label{Setsoffiniteperimeter} Let $E$ be an $\mathcal{L}^{N}$- measurable
subset of $\mathbb{R}^{N}$. For any open set $\Omega\subset\mathbb{R}^{N}$ the
perimeter of $E$ in $\Omega$, denoted by $P(E;\Omega)$, is the variation of
$\chi_{E}$ in $\Omega$, i.e.
\begin{equation}
\label{perimeter}P(E;\Omega):=\sup\left\{  \int_{E} \mathrm{div}\varphi dx:
\varphi\in C^{1}_{c}(\Omega;\mathbb{R}^{d}), \|\varphi\|_{L^{\infty}}%
\leq1\right\}  .
\end{equation}
We say that $E$ is a set of finite perimeter in $\Omega$ if $P(E;\Omega) <+
\infty.$
\end{definition}

Recalling that if $\mathcal{L}^{N}(E \cap\Omega)$ is finite, then $\chi_{E}
\in L^{1}(\Omega)$, by \cite[Proposition 3.6]{AFP2}, it results
that $E$ has finite perimeter in $\Omega$ if and only if $\chi_{E} \in
BV(\Omega)$ and $P(E;\Omega)$ coincides with $|D\chi_{E}|(\Omega)$, the total
variation in $\Omega$ of the distributional derivative of $\chi_{E}$.
%\begin{theorem}
%\label{thm3.36AFP} For any set $E$ of finite perimeter in $\Omega$ the
%distributional derivative of $D\chi_{E}$ is an $\mathbb{R}^{d}$-valued Radon
%measure in $\Omega$. Moreover, $P(E;\Omega)=|D\chi_{E}|(\Omega)$
Moreover,  a
generalized Gauss-Green formula holds:
\begin{equation}\nonumber
{\int_{E}\mathrm{div}\varphi dx=\int_{\Omega}<\nu_{E},\varphi>d|D\chi
_{E}|\;\;\forall\,\varphi\in C_{c}^{1}(\Omega;\mathbb{R}^{d})},
%\label{3.31AFP}%
\end{equation}
where $D\chi_{E}=\nu_{E}|D\chi_{E}|$ is the polar decomposition of $D\chi_{E}$.

We also recall that, when dealing with sets of finite measure, a sequence of
sets $\{E_{n}\}$ converges to $E$ in measure in $\Omega$ if $\mathcal{L}%
^{N}(\Omega\cap(E_{n}\Delta E))$ converges to $0$ as $n\rightarrow\infty$,
where $\Delta$ stands for the symmetric difference. Analogously, the local
convergence in measure corresponds to the above convergence in measure for any
open set $A\subset\subset\Omega$. These convergences are equivalent to $L^{1}(\Omega)$ and $L^{1}%
_{\mathrm{loc}}(\Omega)$ convergences of the characteristic functions. We also
remind that the local convergence in measure in $\Omega$ is equivalent to
convergence in measure in domains $\Omega$ with finite measure.

%Since $P(E;\Omega)=|D\chi_{E}|(\Omega)$, the following results hold (cf.
%\cite[Proposition 3.38]{AFP2}).

%\begin{proposition}
%\label{3.38AFP2}

%\begin{enumerate}
%\item[a)] The function $\Omega\mapsto P(E;\Omega)$ is the restriction to open
%sets of a Borel measure in $\mathbb{R}^{N}$.

%\item[b)] $E\mapsto P(E;\Omega)$ is lower semicontinuous with respect to local
%convergence in measure in $\Omega$.

%\item[c)] $E\mapsto P(E;\Omega)$ is local, i.e. $P(E;\Omega)=P(F;\Omega)$
%whenever $\mathcal{L}^{N}(\Omega\cap(E\Delta F))=0$.

%\item[d)] $P(E;\Omega)=P(\mathbb{R}^{N}\setminus E;\Omega)$ and
%\[
%P(E\cup F;\Omega)+P(E\cap F;\Omega)\leq P(E;\Omega)+P(F;\Omega).
%\]

%\end{enumerate}
%\end{proposition}

Denoting by ${\cal P}(\Omega)$ the family of all sets with finite perimeters in $\Omega$ we recall the Fleming-Rishel formula (see \cite[formula 4.59]{F}):
for every $\Phi \in W^{1,1}(\Omega)$ the set
$
\{t \in \mathbb R: \{\Phi >t\} \not \in {\cal P}(\Omega)\}
$
is negligible in $\mathbb R$ and
\begin{equation}\label{FR}
\displaystyle{\int_\Omega h |\nabla \Phi|dx = \int_{-\infty}^{+ \infty} \int_{\partial^\ast \{\Phi >t\}} h d {\cal H}^{N-1}dt}
\end{equation}
for every  bounded Borel function $h :\Omega \to \mathbb R$, where $\partial ^\ast \{\Phi >t\}$ denotes the essential boundary of $\{\Phi >t\}$ (cf. \cite[Definition 3.60]{AFP2}).

At this point we deal with functions of bounded variation whose Cantor part is null.

\begin{definition}
\label{SBV} A function $v \in BV(\Omega;\mathbb{R}^{m})$ is said to be a
special function of bounded variation, and we write $v \in SBV(\Omega
;\mathbb{R}^{m})$, if $D^{c} v={\underline{0}}$,  i.e.
\[
Dv=\nabla v\mathcal{L}^{N}\lfloor\Omega+ ([v]\otimes\nu_{v})\mathcal{H}%
^{N-1}\lfloor J_{v}.
\]
\end{definition}

The space $SBV_{0}(\Omega;\mathbb{R}^{m})$ is defined by
\begin{equation}\label{SBV0}
SBV_{0}(\Omega;\mathbb{R}^{m}):=\left\{ v \in SBV(\Omega;\mathbb{R}^{m}):
\nabla v=0, \hbox{ and } \mathcal{H}^{N-1}(J_{v})< + \infty\right\} .
\end{equation}
\noindent Clearly, any characteristic function of a set of finite perimeter is
in $SBV_{0}(\Omega)$.

We recall that a sequence of sets $\{E_i\}$ is a Borel partition of a Borel set $B \in {\cal B}(\mathbb R^N)$ if and only if 
$$
E_i \in{\cal B}(\mathbb R^N) \hbox{ for every }i,  E_i \cap E_j = \emptyset \hbox{ for every } i \not= j \hbox{ and } \cup_{i=1}^\infty E_i =B.
$$ 

The above requirements could be weakened requiring that $|E_i \cap E_j|=0$, for $i \not =j$ and $|B \Delta \cup_{i=1}^\infty E_i|=0$.
Such a sequence $\{E_i\}$ is said to be a Caccioppoli partition if and only if each $E_i$ is a set of finite perimeter.

% In \cite[Lemma 3.1]{BCD} 
The following result, whose proof can be found in \cite{CT}, expresses the relations between Caccioppoli partitions and $SBV_0$ functions.

\begin{lemma}\label{lemma31BCD}
If $v \in SBV_0(\Omega;\mathbb R^m)$ then there exist a Borel partition $\{E_i\}$ of $\Omega$ and a sequence $\{v_i\}\subset \mathbb R^m$ such that
$$
v=\sum_{i=1}^\infty v_i \chi_{E_i} \hbox{ a e. } x \in \Omega,
$$
$${\cal H}^{N-1}(J_v \cap \Omega)=\frac{1}{2}\sum_{i=1}^\infty {\cal H}^{N-1}(\partial^\ast E_i \cap \Omega)=\frac{1}{2}\sum_{i\not = j=1}^\infty {\cal H}^{N-1}(\partial^\ast E_i \cap \partial^\ast E_j\cap \Omega),$$
$$ (v^+, v^-, \nu_{v}) \equiv (v^i, v^j, \nu_i) \hbox{ a.e. } x \in \partial^\ast E_i \cap \partial E^\ast_j \cap \Omega,$$
 $\nu_i$ being the unit normal to $\partial^\ast E_i \cap \partial E^\ast_j $,
\end{lemma}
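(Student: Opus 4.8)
\textbf{Proof proposal for Lemma \ref{lemma31BCD}.}

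The plan is to exploit the fact that a function $v\in SBV_0(\Omega;\mathbb R^m)$ has vanishing absolutely continuous and Cantor parts, so that $Dv=([v]\otimes\nu_v)\mathcal H^{N-1}\lfloor J_v$ with $\mathcal H^{N-1}(J_v)<\infty$; the idea is that such a $v$ is (up to a $\mathcal L^N$-null set) piecewise constant, and the level sets of its components generate the partition. First I would reduce to the scalar case for bookkeeping purposes: consider the (at most countable) set of values taken by $v$ on its Lebesgue points. More precisely, one shows that $v(\Omega)$, understood as the set of approximate limits $\tilde v(x)$ over points $x$ of approximate continuity, is at most countable. This is the crux: since $\nabla v=0$, on each ``indecomposable'' piece $v$ cannot vary continuously, and since $|D^cv|=0$ there is no diffuse variation either, so all the variation of $v$ is concentrated on $J_v$, which has finite $\mathcal H^{N-1}$ measure. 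Enumerate these values $\{v_i\}_{i\ge1}\subset\mathbb R^m$ and set $E_i:=\{x\in\Omega: \tilde v(x)=v_i\}$ (taking the representative given by approximate limits); then $\{E_i\}$ is an $\mathcal L^N$-partition of $\Omega$ up to a null set and $v=\sum_i v_i\chi_{E_i}$ a.e.

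Next I would show each $E_i$ has finite perimeter. Writing $\chi_{E_i}$ as a limit (in $L^1$, hence giving lower semicontinuity of total variation) of truncations/compositions of $v$ with Lipschitz functions that separate $v_i$ from the other values, one gets $|D\chi_{E_i}|(\Omega)\le C_i\,\mathcal H^{N-1}(J_v)<\infty$; alternatively, apply the chain rule in $BV$ (or the coarea/Fleming--Rishel formula \eqref{FR} componentwise) to deduce $\sum_i P(E_i;\Omega)<\infty$, so $\{E_i\}$ is a Caccioppoli partition. The structure of $J_v$ then follows from the general theory of Caccioppoli partitions (see \cite{AFP2}, or \cite{CT}): for a Caccioppoli partition one has $J_v=\bigcup_{i\ne j}(\partial^\ast E_i\cap\partial^\ast E_j)$ up to an $\mathcal H^{N-1}$-null set, the reduced boundaries $\partial^\ast E_i$ cover $J_v$ with multiplicity two (each jump point lies on exactly two of them, for $\mathcal H^{N-1}$-a.e.\ point), which gives
\[
\mathcal H^{N-1}(J_v\cap\Omega)=\tfrac12\sum_{i=1}^\infty\mathcal H^{N-1}(\partial^\ast E_i\cap\Omega)=\tfrac12\sum_{i\ne j}\mathcal H^{N-1}(\partial^\ast E_i\cap\partial^\ast E_j\cap\Omega),
\]
and at $\mathcal H^{N-1}$-a.e.\ point of $\partial^\ast E_i\cap\partial^\ast E_j$ the one-sided traces of $v$ are precisely $v_i$ and $v_j$ with normal $\nu_i=-\nu_j$, i.e.\ $(v^+,v^-,\nu_v)\equiv(v^i,v^j,\nu_i)$.

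The main obstacle I anticipate is the countability of the value set and the $\mathcal L^N$-partition claim — that is, ruling out that $v$ (with $\nabla v=0$ and $D^cv=0$) could take uncountably many values on a positive-measure ``dust''. This is where one genuinely uses that $SBV_0$ forces \emph{all} the derivative to sit on the rectifiable set $J_v$ of finite $\mathcal H^{N-1}$ measure, so the complement decomposes into at most countably many ``indecomposable components'' on each of which $v$ is constant; this is exactly the content of the indecomposable-components / Caccioppoli-partition structure theorem, and citing \cite{CT} (or the relevant results in \cite{AFP2}) for that step is what makes the argument short. The rest — identifying traces, the factor $\tfrac12$, the normals — is routine once the partition is in hand.
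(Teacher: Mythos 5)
The paper does not actually prove Lemma~\ref{lemma31BCD}: it states that the proof may be found in \cite{CT}, and everything you outline is essentially a reconstruction of that cited argument (cf.\ also Theorem~4.23 in \cite{AFP2}), so your global strategy is the right one. That said, two remarks are in order. First, you correctly identify the genuine content --- the countability of the value set and the fact that $\{\tilde v = v_i\}$ is an $\mathcal L^N$-partition --- as something that must be delegated to the indecomposable-components / Caccioppoli-partition structure theorem; that is indeed where the work sits, and the rest (double counting with the factor $\tfrac12$, identification of traces and normals on $\partial^\ast E_i\cap\partial^\ast E_j$) is standard once the partition is in hand.

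Second, the alternative route you sketch for finiteness of $P(E_i;\Omega)$ via ``Lipschitz functions that separate $v_i$ from the other values'' is not quite sound on its own: if the values $v_j$ accumulate at $v_i$, no Lipschitz $\phi_i$ can send $v_i$ to $1$ and every other $v_j$ to $0$, so the estimate $|D\chi_{E_i}|(\Omega)\le C_i\,\mathcal H^{N-1}(J_v)$ does not follow this way. Likewise, a straight coarea/Fleming--Rishel argument componentwise gives finite perimeter of \emph{almost every} superlevel set $\{v>t\}$ but does not directly isolate the individual level set $\{v=v_i\}$ when the $v_j$ cluster. These are only offered by you as alternatives, and your main line (invoke the structure theorem of \cite{CT}/\cite{AFP2}) carries the proof correctly; but if you intended the truncation or coarea route to be self-contained, it has this gap.
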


\medskip

In the sequel we identify $(v,u)\in SBV_{0}(\Omega;\mathbb{R}^{m})\times
BV(\Omega;\mathbb{R}^{d})$ with their precise representatives $({\tilde v},
{\tilde u}). $ See \cite[Definition 3.63 and Corollary 3.80]{AFP2} for the definition.

\begin{remark}
\label{vmeas} Since $SBV_{0}(\Omega;\mathbb{R}^{m}) \subset BV(\Omega
;\mathbb{R}^{m})$, then $(v,u)\in BV(\Omega;\mathbb{R}^{m+d})$ for every
$(v,u) \in SBV_{0}(\Omega;\mathbb{R}^{m}) \times BV(\Omega;\mathbb{R}^{d})$.
Thus $(v,u)$ is $|D^{c} (v,u)|$-measurable, and since $D^{c}(v,u)=
({\underline{0}}, D^{c} u)$, we may say that $v$ is $|D^{c} u|$-measurable.

\end{remark}

The
following compactness result for bounded sequences in $SBV(\Omega;
\mathbb{R}^m)$ is due to Ambrosio (see \cite{A1}, \cite{A2}).

\begin{theorem}
\label{Theorem 2.1} Let $\Phi:[0,+\infty) \to[0,+\infty)$, $\Theta:(0,+\infty]
\to(0,+\infty]$ be two functions, respectively convex and concave, and such
that
\[
\lim_{t\to\infty} \frac{\Phi(t)}{t} = +\infty, \quad\Phi\text{ is
nondecreasing},
\]
\[
\Theta(+\infty) = \lim_{t\to\infty} \Theta(t), \quad\lim_{t\to0^{+}}
\frac{\Theta(t)}{t} = +\infty, \quad\Theta\text{ is non decreasing}.
\]

Let $\{v_n\}$ be a sequence of functions in $SBV(\Omega;\mathbb{R}^m)$
such that
\[
\sup_{n} \left\{ \int_{\Omega} \Phi(|\nabla v_{n}|) \, dx + \int_{J_{v_n}}
\Theta(|[v_n]|)\, d\mathcal{H}^{N-1} + \int_{\Omega} |v_n|\, dx\right\}  <
+\infty.
\]
Then there exists a subsequence $\{v_{n_{k}}\}$ converging in $L^{1}%
(\Omega;\mathbb{R}^m)$ to a function $v \in SBV(\Omega;\mathbb{R}^m)$,
and
\[
\nabla v_{n_{k}} \rightharpoonup\nabla v \quad\text{in} \; L^{1}%
(\Omega;\mathbb{R}^{N\times m}),\quad[v_{n_{k}}]\otimes\nu_{v_{n_{k}}%
}\mathcal{H}^{N-1}\lfloor J_{v_{n_{k}}}\overset{\ast}{\rightharpoonup}
[v]\otimes\nu_v\mathcal{H}^{N-1}\lfloor J_v,
\]
\[
\int_{J_v\cap\Omega} \Theta(|[v]|)\, d\mathcal{H}^{N-1} \leq\liminf
_{n\to+\infty} \int_{J_{v_{n}}\cap\Omega} \Theta(|[v_{n}]|) \, d\mathcal{H}%
^{N-1}.
\]

\end{theorem}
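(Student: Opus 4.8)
The plan is to prove Theorem~\ref{mainthmgen} by establishing matching lower and upper bounds for $\mathcal{F}(v,u;\Omega)$ against $\overline{F_0}(v,u;\Omega)$, together with the fact that the relaxed functional is $+\infty$ outside $SBV_0(\Omega;\mathbb{R}^m)\times BV(\Omega;\mathbb{R}^d)$. For the last point, if $(v_n,u)$ is admissible with $v_n\to v$ in $L^1$, $u_n\to u$ in $L^1$ and the energies are bounded, then $(F_2)$ gives a uniform bound on $\int_A|\nabla u_n|\,dx$, while $(G_2)$ gives a uniform bound on $\mathcal{H}^{N-1}(J_{v_n})$ (since $v_n\in SBV_0$, so $|Dv_n|=|[v_n]|\mathcal{H}^{N-1}\lfloor J_{v_n}$, and $|[v_n]|\geq$ a constant on $J_{v_n}$ because $v_n$ takes values in a discrete-type range only in the optimal-design case; in the general $SBV_0$ case the bound on the jump energy via $(G_2)$ controls $|Dv_n|(A)$ directly). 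Hence $\{v_n\}$ is bounded in $BV$ and, being in $SBV_0$ with $\nabla v_n=0$, Ambrosio's compactness Theorem~\ref{Theorem 2.1} (applied with suitable $\Phi$ forcing $\nabla v_n=0$ in the limit and $\Theta$ concave below the jump functional) yields $v\in SBV_0(\Omega;\mathbb{R}^m)$; similarly $u\in BV(\Omega;\mathbb{R}^d)$ by the standard $BV$ compactness. So finiteness of $\mathcal{F}$ forces the target spaces.

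For the lower bound $\mathcal{F}(v,u;\Omega)\geq\overline{F_0}(v,u;\Omega)$, I would follow the blow-up method of Fonseca--M\"uller. Fix a recovery sequence $(v_n,u_n)$, pass to a subsequence realizing the liminf, and let $\mu_n$ be the sequence of energy measures $f(v_n,\nabla u_n)\mathcal{L}^N\lfloor A + g(v_n^+,v_n^-,\nu_{v_n})\mathcal{H}^{N-1}\lfloor(J_{v_n}\cap A)$; up to subsequence $\mu_n\overset{*}{\rightharpoonup}\mu$, a finite Radon measure, and it suffices to bound the Radon--Nikod\'ym derivatives of $\mu$ with respect to $\mathcal{L}^N$, $|D^cu|$, and $\mathcal{H}^{N-1}\lfloor J_{(v,u)}$ from below by $Qf(v,\nabla u)$, $Qf^\infty(v,dD^cu/d|D^cu|)$, and $K_3(v^+,v^-,u^+,u^-,\nu)$ respectively. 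The absolutely continuous estimate uses quasiconvexification of $f$ in $z$ together with $(F_3)$ to handle the variation of $v_n$ near a Lebesgue point of $v$; the Cantor estimate uses the recession function $Qf^\infty$ and property $(F_4)$ to pass from $f$ to $f^\infty$ controlling the error; the jump estimate, which is the heart of the matter, requires rescaling on cubes $Q_{\nu}(x_0,\varepsilon)$ centered at jump points of $(v,u)$, using the Besicovitch-type differentiation Theorems~\ref{thm2.6BBBF}--\ref{thm2.8FM2} and Lemma~\ref{lemma2.5BBBF}, and then recognizing the rescaled problem as (a version of) the cell formula~\eqref{K3} with the correct boundary data $(a,b,c,d,\nu)=(v^+,v^-,u^+,u^-,\nu)$. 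A technical subtlety is that the competitors in $\mathcal{A}_3$ are required to be $L^\infty$; one must truncate the rescaled sequences without increasing the energy too much, which is where $(G_2)$ and the linear growth $(F_2)$ combine.

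For the upper bound $\mathcal{F}(v,u;\Omega)\leq\overline{F_0}(v,u;\Omega)$, the plan is first to prove it for piecewise-constant $v$ (i.e.\ $v=\sum v_i\chi_{E_i}$ via Lemma~\ref{lemma31BCD}) and $u$ affine on a polyhedral partition, by explicitly gluing near-optimal competitors for the cell problems $K_3$ across the interfaces — using a De Giorgi--Letta / Fonseca--Mal\'y measure-theoretic argument to show $\mathcal{F}(v,u;\cdot)$ is the trace of a Radon measure, so that the construction can be localized. Then one approximates a general $(v,u)\in SBV_0\times BV$: the $u$-component is handled by Reshetnyak-type continuity and a standard mollification/area-strict approximation preserving $\int Qf(v,\nabla\cdot)+\int Qf^\infty(v,\cdot)d|D^c\cdot|$, while the $v$-component is approximated by functions with $\mathcal{H}^{N-1}$-finite polyhedral jump set, using $(G_1)$, $(G_3)$ and continuity of $g$ to pass the jump energy to the limit. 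The main obstacle, as usual in these interaction problems, is the jump term: one must simultaneously control the oscillations of $v_n$ and $u_n$ on a neighborhood of $J_{(v,u)}$ and match them to the boundary data of $\mathcal{A}_3$, while ensuring the $L^\infty$ constraint on $v$; handling the coupling between the (finite-perimeter, piecewise-constant) field $v$ and the $BV$ field $u$ in a single cell formula $K_3$ is what distinguishes this from the purely $BV$ relaxation of \cite{FM2} and is where the bulk of the work lies.
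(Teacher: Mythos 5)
Your proposal does not address the statement at hand. The statement is Ambrosio's $SBV$ compactness theorem (Theorem~\ref{Theorem 2.1}), which the paper merely \emph{recalls} from \cite{A1}, \cite{A2} and does not prove. What you have written is instead an outline of a proof of Theorem~\ref{mainthmgen}, the paper's main integral-representation result. That is a different theorem, and in fact your outline explicitly \emph{invokes} Theorem~\ref{Theorem 2.1} as a black box in the first paragraph (``Ambrosio's compactness Theorem~\ref{Theorem 2.1}\dots yields $v\in SBV_0$''); so as a proof of Theorem~\ref{Theorem 2.1} it would be circular, and as a proof of what it actually argues for it is aimed at the wrong target.

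If you are asked to prove Theorem~\ref{Theorem 2.1} itself, a proof must (i) extract an $L^1$-convergent subsequence from the uniform $BV$ bound (which follows since $\Phi$ superlinear and $\Theta$ with $\Theta(t)/t\to\infty$ near $0$, together with $\int|v_n|$ bounded, control $|Dv_n|(\Omega)$), (ii) show the limit $v$ is in $SBV$ with no Cantor part, which is the genuinely delicate point and uses the chain rule in $BV$ for truncations and the superlinearity/concavity hypotheses to rule out concentration of the absolutely continuous part and diffusion of the jump part, and (iii) establish the weak $L^1$ convergence of gradients (via Dunford--Pettis, from equi-integrability given by superlinear $\Phi$), the weak-$\ast$ convergence of jump measures, and the lower semicontinuity of the jump energy via a Reshetnyak/blow-up argument exploiting the joint-convexity property of $(a,b,\nu)\mapsto\Theta(|a-b|)$ (i.e.\ $BV$-ellipticity). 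None of this appears in your proposal.

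If, on the other hand, you intended to sketch Theorem~\ref{mainthmgen}, then the sketch is in the spirit of the paper (lower bound by blow-up with Besicovitch differentiation and the cell formula $K_3$; upper bound via a De Giorgi--Letta measure argument and approximation by piecewise-constant/polyhedral data), but it should be submitted for that theorem, not for Theorem~\ref{Theorem 2.1}. One small inaccuracy to flag even in that reading: in your opening paragraph, $(G_2)$ alone does not control $\mathcal{H}^{N-1}(J_{v_n})$ through a lower bound on $|[v_n]|$ in the general $SBV_0$ case; what $(G_2)$ controls is the total jump energy $\int_{J_{v_n}}(1+|[v_n]|)\,d\mathcal{H}^{N-1}$, i.e.\ both $\mathcal{H}^{N-1}(J_{v_n})$ and $|Dv_n|(\Omega)$, precisely because of the additive ``$1$'' in the lower bound of $(G_2)$, not because the jumps are bounded away from zero.
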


\section{Auxiliary results}\label{auxres}

This section is mainly devoted to describe the properties of the energy densities involved in the integral representation of relaxed functionals \eqref{representation} and \eqref{representationFG}.

Recall that a Borel function $f:\mathbb{R}^{m}\times\mathbb{R}^{d\times
N}\rightarrow\left[  -\infty,+\infty\right]  $ is said to be quasiconvex if
\begin{equation}
f\left( q, z\right)  \leq\frac{1}{\mathcal{L}^{N}\left(  \Omega\right)  }%
\int_{\Omega}f\left( q, z+\nabla\varphi\left(  y\right)  \right)  dy
\label{qcx}%
\end{equation}
for every open bounded set $\Omega\subset\mathbb{R}^{N}$ with $\mathcal{L}%
^{N}\left(  \partial\Omega\right)  =0,$ for every $(q, z)\in\mathbb{R}^{m}
\times\mathbb{R}^{d\times N}$ and every $\varphi\in W_{0}^{1,\infty}\left(
\Omega;\mathbb{R}^{d}\right)  $ whenever the right hand side of $\left(
\ref{qcx}\right)  $ exists as a Lebesgue integral.

The quasiconvex envelope of $f:\mathbb{R}^{m}\times\mathbb{R}^{d\times
N}\rightarrow\left[  0,+\infty\right]  $ is the largest quasiconvex function
below $f$ and it is denoted by $Qf.$ If $f$ is Borel and locally bounded from below
then it can be shown that%

\begin{equation}
Qf\left(  q, z\right)  =\inf\left\{  \int_{Q} f\left(  q, z+\nabla
\varphi\right)  dx:\varphi\in W_{0}^{1,\infty}\left(  Q;\mathbb{R}^{d}\right)
\right\}  , \label{Qfbar}%
\end{equation}
for every $(q, z)\in\mathbb{R}^{m} \times\mathbb{R}^{d\times N}$.

The following result guarantees that the properties of $f$ are inherited by $Qf$. Since the proof  develops along the lines as in \cite[Proposition 2.2]{RZ}, in turn inspired by \cite{D}, we omit it.
\begin{proposition}
\label{continuityQfbar} Let $f:\mathbb{R}^{m}\times\mathbb{R}^{d\times
N}\rightarrow[0,+\infty)$ be a function satisfying $(F_{1})-(F_{3})$, and let $Q
f:\mathbb{R}^{m}\times\mathbb{R}^{d \times N}\rightarrow[0,+\infty)$ be its
quasiconvexification, as in \eqref{Qfbar}. Then $Q f$ satisfies $(F_{1}%
)-(F_{3})$.
\end{proposition}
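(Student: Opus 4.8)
The plan is to verify each of the three properties $(F_1)$, $(F_2)$, $(F_3)$ in turn for $Qf$, using the representation formula \eqref{Qfbar}, namely
\begin{equation}\nonumber
Qf(q,z)=\inf\left\{\int_Q f(q,z+\nabla\varphi)\,dx:\varphi\in W_0^{1,\infty}(Q;\mathbb{R}^d)\right\}.
\end{equation}
The growth condition $(F_2)$ for $Qf$ is the easiest: the upper bound $Qf(q,z)\le f(q,z)\le\beta(1+|z|)$ follows by taking $\varphi=0$ in the infimum, and the lower bound $Qf(q,z)\ge\beta'|z|$ follows because $z\mapsto\beta'|z|$ is convex, hence quasiconvex, hence below its own quasiconvex envelope, and $\beta'|z|\le f(q,z)$ forces $\beta'|z|\le Qf(q,z)$; alternatively one uses Jensen's inequality together with $\int_Q(z+\nabla\varphi)\,dx=z$. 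The Lipschitz-type condition $(F_3)$ is also essentially immediate from the formula: given $q_1,q_2$ and $z$, and $\varphi$ nearly optimal for $Qf(q_2,z)$, apply $(F_3)$ for $f$ pointwise inside the integral,
\begin{equation}\nonumber
Qf(q_1,z)\le\int_Q f(q_1,z+\nabla\varphi)\,dx\le\int_Q f(q_2,z+\nabla\varphi)\,dx+L|q_1-q_2|\int_Q(1+|z+\nabla\varphi|)\,dx,
\end{equation}
and then control $\int_Q|z+\nabla\varphi|\,dx$ using $(F_2)$ (i.e. $\beta'|z+\nabla\varphi|\le f(q_2,z+\nabla\varphi)$ so the integral is bounded by $\tfrac1{\beta'}(Qf(q_2,z)+\varepsilon)\le\tfrac1{\beta'}(\beta(1+|z|)+\varepsilon)$), obtaining $|Qf(q_1,z)-Qf(q_2,z)|\le L'|q_1-q_2|(1+|z|)$ for a new constant; relabelling the constant if one wants exactly $L$ is a cosmetic matter, or one simply states the estimate holds with some constant, which is all $(F_3)$ requires.

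The genuinely delicate point, and the one I expect to be the main obstacle, is continuity $(F_1)$ of $Qf$ jointly in $(q,z)$. Separate continuity is comparatively cheap: continuity in $q$ is a direct consequence of the $(F_3)$-type estimate just established for $Qf$, and local Lipschitz continuity of $z\mapsto Qf(q,z)$ in $z$ is a standard fact for quasiconvex functions with linear growth (one uses the formula \eqref{Qfbar}, replaces $\varphi$ by an admissible competitor for a nearby $z'$, and exploits the at-most-linear growth to bound the increment, cf. the argument in \cite{D} or \cite{RZ}). To upgrade separate continuity to joint continuity one combines the two: for $(q',z')$ near $(q,z)$,
\begin{equation}\nonumber
|Qf(q',z')-Qf(q,z)|\le|Qf(q',z')-Qf(q,z')|+|Qf(q,z')-Qf(q,z)|\le L'|q'-q|(1+|z'|)+\omega_{q}(|z'-z|),
\end{equation}
where the first term is controlled by the $q$-estimate and the second by the (locally uniform in $q$, once one checks the Lipschitz constant in $z$ can be chosen locally uniformly in $q$ thanks to $(F_2)$–$(F_3)$) modulus of continuity in $z$. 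Since the referenced proofs in \cite[Proposition 2.2]{RZ} and \cite{D} carry this out in an entirely analogous setting, the paper omits the details; my proposal is to invoke exactly that, checking only that hypotheses $(F_1)$–$(F_3)$ here match the hypotheses used there, so that the cited argument applies verbatim. The one thing worth double-checking is that $(F_4)$ is not needed for Proposition~\ref{continuityQfbar} — indeed it concerns only the recession function and plays no role in establishing $(F_1)$–$(F_3)$ for $Qf$ — which is consistent with the statement.
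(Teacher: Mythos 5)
Your proposal is correct and, for $(F_2)$ and $(F_3)$, is essentially the argument that \cite[Proposition 2.2]{RZ} and \cite{D} carry out and to which the paper defers; in particular your observation that $(F_3)$ only requires \emph{some} constant and that $(F_4)$ plays no role here are both accurate. The one place where your sketch is slightly misleading is the parenthetical description of the $z$-Lipschitz step for $(F_1)$: you cannot literally ``replace $\varphi$ by an admissible competitor for a nearby $z'$'' and bound the increment directly, because $f$ is only assumed continuous in $z$ (not Lipschitz), so $\int_Q f(q,z'+\nabla\varphi)\,dx - \int_Q f(q,z+\nabla\varphi)\,dx$ is not controlled by $|z'-z|$ without further work. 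The standard route — and the one used in the sources you cite — is that quasiconvexity of $Qf(q,\cdot)$ implies rank-one convexity, which together with the upper bound $Qf(q,z)\le\beta(1+|z|)$ already obtained in $(F_2)$ yields global Lipschitz continuity in $z$ with a constant depending only on $\beta$, $N$, $d$, and in particular uniformly in $q$. This uniformity is exactly what makes your triangle-inequality splitting for joint continuity go through without needing to track any $q$-dependence of $\omega_q$. With that correction the rest of the argument is sound.
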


\begin{remark}
\label{propfinfty}
Let $f:\mathbb{R}^{m} \times\mathbb{R}^{d \times N} \to[0, +\infty)$ be a
function satisfying $(F_{1})-(F_{4})$, with $f^{\infty}$ as in \eqref{finfty}.

\noindent(i) Recall that the recession function $f^{\infty}(q, \cdot)$ is 
positively one homogeneous for every $q \in\mathbb{R}^{m}$.

\medskip\noindent(ii) We observe that, if $f$ satisfies the growth condition
$(F_{2}) $, then $\beta'|z|\leq f^{\infty}(q, z)\le\beta|z|$ holds. Moreover, if $f$ satisfies $(F_3)$, then $f^\infty$ satisfies
$|f^\infty(q, z)- f^\infty(q', z)|\leq L|q-q'||z|$, where $L$ is the constant appearing in $(F_3)$.

\medskip\noindent(iii) As showed in \cite[Remark 2.2 (ii)]{FM2}, if a function
$f:\mathbb{R}^{m} \times\mathbb{R}^{d\times N}\longrightarrow[0,+ \infty)$ is
quasiconvex in the last variable and such that $f(q,
z)\le c(1+|z|)$, for some $c>0$, then, its recession function $f^{\infty
}(q, \cdot)$ is also quasiconvex.

\medskip\noindent(iv) A proof entirely similar to \cite[Proposition
3.4]{BZZ} (see also \cite[Proposition 2.6]{RZ}) ensures that for every
$(q, z) \in\mathbb{R}^{m} \times\mathbb{R}^{d \times N}, $ $Q(f^{\infty
})(q, z)= (Qf)^{\infty}(q, z)$, hence we will adopt the notation $Qf^\infty$. In particular if $f$ satisfies $(F_1)-(F_3)$, Proposition \ref{continuityQfbar} guarantees that $Qf^\infty$ is continuous in both variables. Furthermore, for every $q\in \mathbb R^m$, $Qf^\infty(q, \cdot)$ is Lipschitz continuous in the last variable.

\medskip\noindent(v) $(Qf)^{\infty}$ satisfies the analogous condition to
$(F_{4})$. We also observe, as emphasized in \cite{FM2}, that $(F_{4})$ is
equivalent to say that there exist $C >0$ and $\alpha\in(0,1) $ such that
$$
\displaystyle{\left| f^{\infty}(q, z)-f(q, z)\right|
\leq C (1+ |z|^{1-\alpha})}%
$$
for every $(q, z) \in\mathbb{R}^{m} \times\mathbb{R}^{d \times N}.$

An argument entirely similar to \cite[Proposition 2.7]{RZ} ensures that there
exist $\alpha\in(0,1),$ and $C^{\prime}>0$ such that
\[
\displaystyle{\left|  (Qf)^{\infty}(q, z)- Qf(q, z)\right| \leq
C^{\prime}(1+|z|^{1-\alpha})}
\]
for every $(q, z)\in\mathbb{R}^{m}\times\mathbb{R}^{d \times N}.$
\end{remark}

%\textbf{For the properties of $K_{3}$ we are following \cite[page 132]{BBBF}..so I suggest to cut the proof.....it is still contained %in the version CZoptimaldesign$1962013$}

The following proposition, whose proof can be obtained arguing exactly as in \cite[page 132]{BBBF}, establishes the properties of the density $K_3$. 

\begin{proposition}\label{propK3}
Let $f:\mathbb R^m \times \mathbb R^{d \times N}\to [0, +\infty)$ and $g: \mathbb R^m\times \mathbb R^m \times S^{N-1}\to (0, +\infty)$. Let $K_3$ be the function defined in \eqref{K3}.
If $( F_{1}) -( F_{4}) $ and $\left(  G_{1}\right)  -\left(  G_{3}\right)  $
hold then

\begin{enumerate}
\item[a)] $\left\vert K_3\left(  a,b,c,d,\nu\right)  -K_3\left(
a',b',c',d',\nu\right)  \right\vert \leq
C\left(  \left\vert a-a'\right\vert +\left\vert b-b'\right\vert +\left\vert c-c'\right\vert +\left\vert d-d'\right\vert \right)  $ for every $\left(  a,b,c,d,\nu\right)  ,$ $\left(
a',b',c',d',\nu\right)  \in\mathbb R^m  \times\ \mathbb R^m  \times \mathbb R^{d}\times \mathbb R^{d}\times
S^{N-1};$

\item[b)] $\nu\longmapsto K_3\left(  a,b,c,d,\nu\right)  $ is upper
semicontinuous for every $\left(  a,b,c,d\right)  \in \mathbb R^m\times \mathbb R^{m}\times
\mathbb R^{d}\times \mathbb R^{d};$

\item[c)] $K_3$ is upper semicontinuous in $\mathbb R^{m}\times \mathbb R^{m}\times
\mathbb R^{d}\times \mathbb R^{d}\times S^{N-1};$

\item[d)] $K_3\left(  a,b,c,d,\nu\right)  \leq C\left(  \left\vert
a-b\right\vert +\left\vert c-d\right\vert + 1\right)  $ for every $\nu\in
S^{N-1}.$  More precisely, from the growth conditions $(F_2)$, $(G_2)$ and the definition of $K_3$ we have 
$K_3(a,a,c,d,\nu) \leq C(|c-d|),$  $K_3(a,b,c,c,\nu)\leq C(1+|a-b|)$. 

%{\bf the last $1$ in $d)$ is due to growth condition $(G_2)$.}
\end{enumerate}
\end{proposition}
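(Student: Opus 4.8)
The plan is to establish the four assertions about $K_3$ by exploiting the scaling and covering structure of the infimum problem \eqref{K3}, following the argument scheme of \cite[page 132]{BBBF}. Throughout, the key technical tool is that competitors for $K_3(a,b,c,d,\nu)$ are pairs $(v,u)$ on the unit cube $Q_\nu$ that are $1$-periodic in the $\nu_1,\dots,\nu_{N-1}$ directions and attain prescribed traces $(a,c)$ and $(b,d)$ on the two faces $\{y\cdot\nu=\pm\tfrac12\}$; by rescaling, the same infimum is obtained using the cube $Q_\nu(x_0,\varepsilon)$ with traces on $\{(y-x_0)\cdot\nu=\pm\tfrac{\varepsilon}{2}\}$, which is what lets one localize estimates near a jump point.

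For part (a), the Lipschitz continuity in $(a,b,c,d)$, I would fix a near-optimal competitor $(v,u)\in\mathcal{A}_3(a,b,c,d,\nu)$ for $K_3(a,b,c,d,\nu)$ and modify it near the two faces to produce a competitor for $K_3(a',b',c',d',\nu)$. Concretely, in thin slabs $\{|\,y\cdot\nu\mp\tfrac12\,|<\delta\}$ one interpolates linearly (in the $\nu$-direction) between the old boundary data $(a,c)$, resp. $(b,d)$, and the new data $(a',c')$, resp. $(b',d')$; since $v$ only takes values in an affine segment this does not leave $SBV_0$ and the new jumps introduced on the two interfaces $\{y\cdot\nu=\mp(\tfrac12-\delta)\}$ have total $\mathcal H^{N-1}$-mass $\mathcal O(1)$ weighted by $g$, hence by $(G_2)$ cost $\mathcal O(|a-a'|+|b-b'|)$; the added gradient in $u$ on the slabs has size $\mathcal O((|c-c'|+|d-d'|)/\delta)$ on a set of measure $\mathcal O(\delta)$, so by $(F_2)$/$(F_4)$ (growth of $Qf^\infty$, which by Remark~\ref{propfinfty}(ii),(iv) is Lipschitz and satisfies $\beta'|z|\le Qf^\infty(q,z)\le\beta|z|$) its contribution is $\mathcal O(|c-c'|+|d-d'|)$ uniformly in $\delta$; finally the perturbation of the bulk term $Qf^\infty(v,\nabla u)$ caused by replacing $v$ by the interpolant on the slab is controlled by $(F_3)$. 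Taking $\delta$ small and then the infimum gives one inequality, and the symmetric argument gives the reverse; this yields (a).

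Parts (b) and (c), the upper semicontinuity in $\nu$ and then jointly in all variables, I would obtain by the standard device of transporting a competitor for $K_3(a,b,c,d,\nu)$ to one for $K_3(a,b,c,d,\nu')$ via a rotation/linear map $R$ close to the identity taking $\nu$ to $\nu'$ and the orthonormal frame accordingly. The map $R$ distorts $Q_{\nu'}$ into a slightly tilted parallelepiped; one covers $Q_{\nu'}$ by (rescaled, periodically repeated) copies of $R(Q_\nu)$ up to a boundary layer of small measure, fills the boundary layer by the affine interpolation used in part (a), and checks that the change of variables multiplies the bulk integral $\int Qf^\infty(v,\nabla u)$ and the surface integral $\int_{J_v}g(v^+,v^-,\nu_v)\,d\mathcal H^{N-1}$ by factors tending to $1$ as $\nu'\to\nu$ — here continuity of $g$ in all three variables $(G_1)$ is used for the surface term and $1$-homogeneity plus Lipschitzianity of $Qf^\infty(q,\cdot)$ for the bulk term. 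This gives $\limsup_{\nu'\to\nu}K_3(a,b,c,d,\nu')\le K_3(a,b,c,d,\nu)$, i.e. (b); combining with the Lipschitz estimate (a) in the remaining variables yields joint upper semicontinuity (c).

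For part (d), the linear growth bound, one simply constructs an explicit competitor in $\mathcal{A}_3(a,b,c,d,\nu)$: take $v$ to jump once, from $b$ to $a$, across the hyperplane $\{y\cdot\nu=0\}$ inside $Q_\nu$ (so $\mathcal H^{N-1}(J_v\cap Q_\nu)=1$ and the surface cost is $g(a,b,\cdot)\le C(1+|a-b|)$ by $(G_2)$), and take $u$ affine in the $\nu$-direction interpolating the two trace values $d$ and $c$ (so $|\nabla u|=|c-d|$ and the bulk cost is $\int_{Q_\nu}Qf^\infty(v,\nabla u)\,dx\le\beta|c-d|$ by Remark~\ref{propfinfty}(ii)). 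This gives $K_3(a,b,c,d,\nu)\le C(1+|a-b|+|c-d|)$, and specializing $a=b$ (no jump in $v$, so the surface term drops) gives $K_3(a,a,c,d,\nu)\le C|c-d|$, while specializing $c=d$ ($u$ constant, $\nabla u=0$) gives $K_3(a,b,c,c,\nu)\le C(1+|a-b|)$, which are the sharpened bounds in (d).

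The main obstacle is the rotation argument in (b): one must handle the mismatch between the tilted copies of $R(Q_\nu)$ and the target cube $Q_{\nu'}$ without the correction layer blowing up the surface energy. The resolution is exactly the slab-interpolation of part (a), applied on a layer whose thickness can be chosen independently small while its energetic cost stays $\mathcal O(|\nu-\nu'|)$ because of the linear (not superlinear) growth of $Qf^\infty$ and the boundedness from above of $g$; so once part (a)'s perturbation lemma is in hand, (b)–(d) follow by essentially bookkeeping, and the whole proposition reduces to that single gluing construction plus the change-of-variables estimates.
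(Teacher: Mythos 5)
Your construction for part (a) breaks down at the point where you modify $v$ near the faces. You propose to interpolate $v$ linearly in the $\nu$-direction between the old boundary value $a$ and the new value $a'$ across a slab of thickness $\delta$, asserting this ``does not leave $SBV_0$''; but a linear interpolant has $\nabla v\neq 0$, which violates the defining requirement $\nabla v=0$ of $SBV_0(Q_\nu;\mathbb R^m)$ in \eqref{SBV0}. The compatible repair is a sharp jump layer: replace $v$ by $a'$ on a thin slab $\{\tfrac12-\delta<y\cdot\nu<\tfrac12\}$, which stays in $SBV_0$ but creates a new interface of $\mathcal H^{N-1}$-measure $\approx 1$, whose cost is $g(a,a',\cdot)\ge \tfrac1C$ by the left inequality in $(G_2)$. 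That cost is $\mathcal O(1)$, not $\mathcal O(|a-a'|)$ --- your estimate silently drops the additive $1$ in the growth bound $g(\lambda,\theta,\nu)\le C(1+|\lambda-\theta|)$, which is exactly what encodes that any new interface of $v$ has unit cost regardless of amplitude.

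This is not a repairable gap in your argument alone: as you correctly note in part (d), $K_3(a,a,c,c,\nu)=0$ (constant competitor), while for $b'\ne a$ any $v\in\mathcal A_3(a,b',c,c,\nu)$ must jump on a set of $\mathcal H^{N-1}$-measure at least $1$ (slice along lines parallel to $\nu$ and use the prescribed traces), so $K_3(a,b',c,c,\nu)\ge \tfrac1C>0$. Hence $K_3$ is not even continuous across $\{a=b\}$, and the Lipschitz estimate (a) in the $(a,b)$-slots cannot hold over all of $\mathbb R^m\times\mathbb R^m$. The paper's argument is a pointer to \cite[page~132]{BBBF}, where the first two arguments range over a fixed finite set $T$, so the Lipschitz bound in $(a,b)$ is vacuous there and the genuine content is Lipschitz continuity in $(c,d)$; your slab interpolation is sound for that part, since $u$ is $W^{1,1}$ and $Qf^\infty$ has linear growth. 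Your (b) and (c) inherit the same difficulty through their reliance on (a), whereas part (d) is correct as written.
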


%We recall the notion of $BV$-ellipticity (cf. \cite{A3}, \cite{AFP2} and \cite{BFLM}).

\noindent A Borel measurable function
$g :\mathbb  R^m \times \mathbb R^m \times  S^{N-1} \to \mathbb R $ is BV-elliptic (cf. \cite{A3}, \cite{AFP2} and \cite{BFLM}) if for all  $(a, b, \nu) \in \mathbb R^m \times \mathbb R^m \times S^{N-1}$, and for any finite subset $T$ of  $\mathbb R^m$
\begin{equation}\label{Bvellipticity}
\int_{J_{w}\cap Q_{\nu}} g(w^+, w^-, \nu_w) d{\cal H}^{N-1} \geq  g(a, b,\nu)
\end{equation}
for all $w \in BV (Q_\nu; T)$ such that $w = v_0$ on $\partial Q_\nu$, 
where 
\begin{equation}\label{vab}
v_0:= \left\{
\begin{array}{ll}
a \hbox{ if } x \cdot \nu > 0,\\

b \hbox{ if } x\cdot\nu \leq 0.
\end{array}
\right.
\end{equation}

We are in position to provide some approximation results which allow us to reobtain the relaxed functionals and the related energy densities in terms of suitable relaxation procedures. To this end we start by stating a result very similar to \cite[Proposition 3.5]{BBBF} which allows to achieve $K_3$.

\begin{proposition}\label{prop3.5BBBF}
Let $f:\mathbb R^m \times \mathbb R^{d \times N}\to [0,+\infty)$ and $g:\mathbb R^m \times \mathbb R^m \times S^{N-1} \to (0,+\infty)$ be functions such that $(F_1)-(F_4)$ and $(G_1)-(G_3)$ hold, respectively. Let $K_3$ be the function defined in \eqref{K3} and $(v_0, u_0) $ be given by
\begin{equation}\label{v0u0}
v_0(x):=\left\{
\begin{array}{ll}
a \hbox{ if }x \cdot \nu >0,\\
b \hbox{ if } x \cdot \nu < 0
\end{array}
\right., \;\;\; u_0(x):= \left\{
\begin{array}{ll}
c \hbox{ if }x \cdot \nu >0,\\
d \hbox{ if }x \cdot \nu < 0.
\end{array}
\right.
\end{equation}
Then
$$
\begin{array}{ll}
K_3(a,b,c,d,\nu)&=\displaystyle{\inf_{(v_n, u_n)}\left\{ \liminf_{n\to \infty} \left(\int_{Q_\nu}Qf^\infty(v_n(x), \nabla u_n(x))dx\right.\right.}\displaystyle{+\left. \int_{Q_\nu \cap J_{v_n}}g(v_n^+(x), v_n^-(x), \nu_n(x)) d{\cal H}^{N-1}\right):}
\\
\\
&\displaystyle{ (v_n,u_n) \in SBV_0(Q_\nu;\mathbb R^m)\times W^{1,1}(Q_\nu;\mathbb R^d), (v_n,u_n)\to (v_0,u_0) \hbox{ in }L^1(Q_\nu;\mathbb R^{m+d}) 
\Big\}}\\
\\
&\displaystyle{=: K_3^\ast(a,b,c,d,\nu).}
\end{array}
$$
\end{proposition}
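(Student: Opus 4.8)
The plan is to prove the two inequalities $K_3 \le K_3^\ast$ and $K_3^\ast \le K_3$ separately. The inequality $K_3^\ast \le K_3$ is the easy direction: given any competitor $(v,u) \in \mathcal{A}_3(a,b,c,d,\nu)$ for the infimum defining $K_3$ in \eqref{K3}, one wants to produce a sequence $(v_n,u_n)$ admissible for $K_3^\ast$ — i.e. converging to $(v_0,u_0)$ in $L^1(Q_\nu;\mathbb R^{m+d})$ — whose energies converge to the energy of $(v,u)$. The natural choice is a periodic homogenization construction: extend $(v,u)$ by periodicity in the $\nu_1,\dots,\nu_{N-1}$ directions and rescale in the $\nu$ direction only, i.e. set $(v_n,u_n)(x) := (v(n x'/\!\!/), u(\cdot))$ in a thin slab and interpolate so that the boundary data $(a,c)$ resp. $(b,d)$ on $\{x\cdot\nu = \pm 1/2\}$ are attained; a standard slicing argument shows $(v_n,u_n) \to (v_0,u_0)$ in $L^1$ while, by the periodicity in the tangential directions and the $1$-homogeneity of $Qf^\infty$ (Remark \ref{propfinfty}(i)), the bulk and surface energies are left unchanged up to a vanishing boundary-layer term controlled by $(F_2)$ and $(G_2)$. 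Taking the infimum over competitors $(v,u)$ gives $K_3^\ast \le K_3$.

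For the reverse inequality $K_3 \le K_3^\ast$, I would take a sequence $(v_n,u_n) \in SBV_0(Q_\nu;\mathbb R^m)\times W^{1,1}(Q_\nu;\mathbb R^d)$ with $(v_n,u_n)\to(v_0,u_0)$ in $L^1$ and $\liminf_n E_n = K_3^\ast(a,b,c,d,\nu)$ (where $E_n$ denotes the energy functional inside the braces), and modify it into an admissible competitor for $K_3$ without increasing the energy in the limit. Two issues must be dealt with. First, $v_n$ need not be bounded, whereas $\mathcal{A}_3$ requires $v_n \in L^\infty$; this is handled by a truncation argument — replace $v_n$ by its composition with a retraction onto a large ball containing $a,b$ — using $(F_3)$/growth of $Qf^\infty$ and $(G_2)$ to check the energy does not increase (or increases by a controllable amount). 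Second, and more seriously, $(v_n,u_n)$ need not attain the prescribed boundary values on $\{x\cdot\nu = \pm 1/2\}$ nor be tangentially periodic. The standard fix is a two-step cut-off: first use the $L^1$ convergence together with a Fubini/mean-value argument to find, near the two faces, slices on which $(v_n,u_n)$ is close to $(v_0,u_0)$, and glue in the constant boundary data across a thin transition layer; then use De Giorgi's slicing/averaging over $\mathcal{O}(k)$ nested frames in the tangential directions to select one frame on which the tangential-periodicity surgery costs energy $\le C/k$, letting $k \to \infty$ after $n\to\infty$ via a diagonal argument. Throughout, the jump created by gluing is estimated by $(G_2)$ (linear in the jump size, times $\mathcal{H}^{N-1}$ of a vanishingly small set) and the bulk contribution of the transition layer by the linear growth $(F_2)$ of $Qf^\infty$.

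The main obstacle I expect is the gluing/cut-off step for the reverse inequality: one must simultaneously impose the affine-in-$\nu$ boundary data on both faces and enforce tangential periodicity while keeping the surface energy under control, and the surface term $g$ has only linear (not superlinear) growth in the jump, so the "bad" jumps introduced along the gluing interfaces must be shown to live on sets of arbitrarily small $\mathcal{H}^{N-1}$-measure. Since we work in $SBV_0$ — where $\nabla v_n \equiv 0$ and only the jump part of $v_n$ survives — the relevant compactness is that of Lemma \ref{lemma31BCD} (Caccioppoli partitions) rather than the full $SBV$ compactness; this is convenient because it means $v_n$ is piecewise constant, so truncation and gluing act cleanly on the partition sets, and the surface energy of $v_n$ is exactly $\tfrac12\sum {\cal H}^{N-1}(\partial^\ast E_i^n \cap Q_\nu)$ up to the $g$-weight. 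A secondary technical point is that $Qf^\infty$ replaces $f^\infty$ in the energy here; but by Remark \ref{propfinfty}(iv) $Qf^\infty = (Qf)^\infty$ is continuous, $1$-homogeneous and Lipschitz in $z$, so all the estimates above go through verbatim with $f$ replaced by $Qf$. Combining the two inequalities yields $K_3 = K_3^\ast$, which is the assertion of Proposition \ref{prop3.5BBBF}.
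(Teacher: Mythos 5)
Your overall plan — prove the two inequalities $K_3\leq K_3^\ast$ and $K_3^\ast\leq K_3$ separately, with the first handled by a truncation-plus-cut-off surgery and the second by exploiting tangential periodicity — is precisely the strategy of \cite[Proposition 3.5]{BBBF}, which the paper cites for the proof, and your sketch of the hard direction $K_3\leq K_3^\ast$ is in substance a re-derivation of Lemma~\ref{Lemma4.1FM} (truncation via a family of Lipschitz maps $\phi_i$ with an averaging step to pick a good level, followed by De Giorgi slicing and the Fleming--Rishel coarea formula to install the boundary data and tangential periodicity without paying energy), applied with $Qf^\infty$ in place of $Qf$. So the hard direction is fine in outline. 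One small caveat: a single ``retraction onto a large ball'' is not enough for the truncation step, because $g$ is not monotone in $|\lambda-\theta|$; one needs the family $\phi_i$ of Lemma~\ref{Lemma4.1FM} together with the $\frac1k\sum_i$ averaging to locate a truncation level whose energy excess is $O(\varepsilon)$. You gesture at this with the De Giorgi averaging, but it should be flagged that the averaging is needed already at the truncation stage, not only at the gluing stage.

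The easy direction $K_3^\ast\leq K_3$, as written, does not work. You propose to ``rescale in the $\nu$ direction only'' (and the formula $(v(nx'_{\parallel}),u(\cdot))$ suggests tangential-only rescaling — the two are not the same and neither succeeds). If you rescale only tangentially, the $\nu$-profile of $(v,u)$ is unchanged and the sequence does not converge to $(v_0,u_0)$ in $L^1$. If you rescale only in the $\nu$ direction, $\nabla u_n=(\nabla'\tilde u, n\,\partial_N\tilde u)$ is scaled anisotropically, so the $1$-homogeneity of $Qf^\infty$ cannot be used to pull the factor $n$ out; passing to the limit produces $\int Qf^\infty(\tilde v,(0,\partial_N\tilde u))$ rather than $\int Qf^\infty(\tilde v,\nabla\tilde u)$, which is the wrong quantity. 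The construction that works is the \emph{isotropic} rescaling: extend $(v,u)\in\mathcal{A}_3$ to $\mathbb R^N$ by $1$-periodicity in $\nu_1,\dots,\nu_{N-1}$ and by the constant values $(a,c)$ for $y\cdot\nu>\tfrac12$, $(b,d)$ for $y\cdot\nu<-\tfrac12$ (continuous by the boundary conditions in $\mathcal{A}_3$), then set $(v_n,u_n)(x):=(\tilde v(nx),\tilde u(nx))$. Then $(v_n,u_n)\to(v_0,u_0)$ in $L^1(Q_\nu)$, and a change of variables $y=nx$ together with the $1$-homogeneity $Qf^\infty(q,nz)=nQf^\infty(q,z)$, the $(N-1)$-scaling of $\mathcal{H}^{N-1}$, and the exact tiling of $nQ_\nu\cap\{|y\cdot\nu|<\tfrac12\}$ by $n^{N-1}$ translated copies of $Q_\nu\cap\{|y\cdot\nu|<\tfrac12\}$ shows that both the bulk and the surface energy of $(v_n,u_n)$ on $Q_\nu$ equal those of $(v,u)$ for every $n$. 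Taking the infimum over $(v,u)\in\mathcal{A}_3$ then gives $K_3^\ast\leq K_3$. With this fix, your argument is correct and coincides with the intended one.
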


\begin{remark}\label{applicationofprop3.4}
i) It is worthwhile to observe that the above result ensures a sharper result than the one which is stated, namely the same type of arguments in \cite[Proposition 3.5]{BBBF}  allow us to obtain $K_3(a,b,c,d, \nu)$ as a relaxation procedure but with test sequences in ${\cal A}_3(a,b,c,d,\nu)$, converging to $(v_0, u_0)$ in \eqref{v0u0}.

\noindent ii) Notice that by virtue of the growth conditions on $Qf^\infty$ (cf. Remark \ref{propfinfty}) we can replace in \eqref{A3} the space $W^{1,1}(Q_\nu;\mathbb R^d)$ by $W^{1,\infty}(Q_\nu;\mathbb R^d)$.

\noindent iii) Under assumptions $(G_1)-(G_3)$, the function  $K_3$ in \eqref{K3} can be obtained,  either taking  test functions $v$ in $BV(\Omega; T)$ for every $T\subset \mathbb R^m$, with ${\rm card}(T)$ finite, or in $SBV_0(\Omega;\mathbb R^m) \cap L^\infty(\Omega;\mathbb R^m)$.  
This is easy to verify by virtue of Lemma \ref{lemma31BCD}. Namely, one can approximate functions $v$ in $SBV_0(\Omega;\mathbb R^m) \cap L^\infty(\Omega;\mathbb R^m)$ by sequences $\{v_n\}$ in $BV(\Omega; T_n)$ with $T_n \subset \mathbb R^m$ and  ${\rm card}(T_n)$ finite. Moreover $(v_n^+, v_n^-, \nu_{v_n}) \to (v^+, v^-, \nu_v)$ pointwise and we can apply reverse Fatou's lemma to obtain the equivalence between the two possible definitions of $K_3$.

\noindent iv) Observe that the properties of $K_3$ and the assumptions on $f$ and $g$ allow us to replace in the definition of ${\cal A}_3$ (see formula \eqref{A3}) the set $SBV_0(Q;\mathbb R^m)\cap L^\infty(\Omega;\mathbb R^m)$ by $SBV_0(\Omega;\mathbb R^m)$.
\end{remark}

By the proposition below one can replace in \eqref{calFG}, $f$ by its quasiconvexification $Qf$. We will omit the proof, which is quite standard, exploiting the relaxation results in the Sobolev spaces, cf. \cite[Theorem 9.8]{D}. % cf. file 13-1-18-optimaldesign.tex.
\begin{proposition}
\label{propqcx} Let $\Omega\subset\mathbb{R}^{N}$ be a bounded open set,
$f$ and $g$ be as in Theorem \ref{lsctheorem}, $Qf$ as in $\left(  \ref{Qfbar}\right)  $ and let
$\mathcal{F}$ be given by \eqref{calFG} . Then for every
 $A\in
\mathcal{A}\left(  \Omega\right)  $ and  for every $\left(  v,u\right)  \in SBV_0\left( A;\mathbb R^m  \right)
\times BV\left( A;\mathbb{R}^{d}\right),$ 
\[%
\begin{array}
[c]{c}%
\mathcal{F}\left(  v,u;A\right)  =\inf\left\{  \underset{n\rightarrow
\infty}{\lim\inf}%
%TCIMACRO{\dint _{A}}%
%BeginExpansion
{\displaystyle\int_{A}}
Qf\left(  {v_{n},\nabla u_{n}}\right)  {dx+} \displaystyle{\int_{A \cap J_{v_n}}}g(v^+_n, v^-_n, \nu_n)d {\cal H}^{N-1} : \right.\\
\quad \quad \quad \quad \left\{ (v_n, u_{n})\right\}  \subset SBV_0(A; \mathbb R^m) \times W^{1,1}\left(
A;\mathbb{R}^{d}\right), \left. ~(v_n,u_{n})\rightarrow (v, u)\text{ in
}L^1(A;\mathbb R^m) \times L^{1}\left(  A;\mathbb{R}^{d}\right) 
\right\}  .
\end{array}
\]

\end{proposition}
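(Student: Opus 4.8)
The plan is to prove the two inequalities separately, as usual for relaxation identities. Denote by $\mathcal{G}(v,u;A)$ the right-hand side of the claimed identity, i.e.\ the relaxed functional built with the integrand $Qf$ in place of $f$ on the bulk part, keeping the same surface integrand $g$ on $J_{v_n}$ and the same admissible sequences $(v_n,u_n)\subset SBV_0(A;\mathbb R^m)\times W^{1,1}(A;\mathbb R^d)$ converging to $(v,u)$ in $L^1$. Since $Qf\le f$ pointwise, every competitor for $\mathcal F(v,u;A)$ is a competitor for $\mathcal G(v,u;A)$ with no larger energy, so the inequality $\mathcal{G}(v,u;A)\le\mathcal{F}(v,u;A)$ is immediate. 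The content is the reverse inequality $\mathcal F(v,u;A)\le\mathcal G(v,u;A)$: given an almost optimal sequence $(v_n,u_n)$ for $\mathcal G$, we must produce a (possibly different) sequence admissible for $\mathcal F$ whose $f$-energy is asymptotically no larger than $\liminf_n\big(\int_A Qf(v_n,\nabla u_n)\,dx+\int_{A\cap J_{v_n}}g(v_n^+,v_n^-,\nu_n)\,d\mathcal H^{N-1}\big)$.

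The key step is a \emph{fixed-$v$ Sobolev relaxation}. For each fixed $n$, consider the function $u\mapsto\int_A f(v_n,\nabla u)\,dx$ on $W^{1,1}(A;\mathbb R^d)$; by $(F_1)$ and $(F_2)$ its integrand $z\mapsto f(v_n(x),z)$ is a Carath\'eodory function with linear growth and superlinear coercivity is not needed here since we relax with respect to $L^1$-convergence of $u$ alone (not weak-$W^{1,1}$), so by the standard relaxation theorem in Sobolev spaces (cf.\ \cite[Theorem 9.8]{D}, applied with $v_n$ frozen) the $L^1(A;\mathbb R^d)$-lower semicontinuous envelope of $\int_A f(v_n,\nabla u)\,dx$ restricted to $W^{1,1}$ is $\int_A Qf(v_n,\nabla u)\,dx$. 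Hence for each $n$ one can pick $u_{n,k}\in W^{1,1}(A;\mathbb R^d)$ with $u_{n,k}\to u_n$ in $L^1$ as $k\to\infty$ and $\int_A f(v_n,\nabla u_{n,k})\,dx\to\int_A Qf(v_n,\nabla u_n)\,dx$. Then a diagonal extraction $k=k(n)$ yields $w_n:=u_{n,k(n)}$ with $w_n\to u$ in $L^1(A;\mathbb R^d)$ and $\limsup_n\int_A f(v_n,\nabla w_n)\,dx\le\liminf_n\int_A Qf(v_n,\nabla v_n)\,dx$; crucially the surface term is untouched because we only modified $u_n$, not $v_n$, so $\int_{A\cap J_{v_n}}g(v_n^+,v_n^-,\nu_n)\,d\mathcal H^{N-1}$ is unchanged. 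The pair $(v_n,w_n)$ is then admissible for $\mathcal F(v,u;A)$ and delivers the desired bound.

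The main obstacle is making the frozen-$v$ relaxation and the subsequent diagonalization \emph{uniform enough} that the diagonal sequence still converges in $L^1$ and the energies pass correctly to the limit: one must control $\|u_{n,k}-u_n\|_{L^1}$ and $|\int_A f(v_n,\nabla u_{n,k})\,dx-\int_A Qf(v_n,\nabla v_n)\,dx|$ simultaneously in a metric-space diagonalization (Attouch's lemma), which is routine but requires that the approximating sequences in \cite[Theorem 9.8]{D} can be taken in $W^{1,1}$ and not merely in $W^{1,\infty}$ — here the linear growth $(F_2)$ and the representation \eqref{Qfbar} of $Qf$ via $W_0^{1,\infty}$ test functions guarantee the density argument goes through. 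One should also remark that the argument is purely local in $A$ and uses none of $(F_3)$, $(F_4)$ or $(G_1)$–$(G_3)$ beyond what is needed to make $\mathcal F$ and $\mathcal G$ well defined, which is why the statement is asserted without proof. $\blacksquare$
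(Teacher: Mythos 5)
Your proposal is correct and follows exactly the route the paper itself signals but does not carry out: since $Qf\le f$ the inequality $\mathcal{G}\le\mathcal{F}$ is trivial, and for the reverse one freezes each $v_n$, applies the Sobolev relaxation theorem (Dacorogna, Theorem~9.8) to the Carath\'eodory integrand $(x,z)\mapsto f(v_n(x),z)$ to obtain a recovery sequence $u_{n,k}\to u_n$ in $L^1$ with $\int_A f(v_n,\nabla u_{n,k})\,dx\to\int_A Qf(v_n,\nabla u_n)\,dx$, diagonalizes in the metric space $L^1(A)$, and observes that the surface term is untouched because $v_n$ is not modified. Only two small remarks: in the two displayed estimates at the end you have written $\int_A Qf(v_n,\nabla v_n)\,dx$ where you clearly mean $\int_A Qf(v_n,\nabla u_n)\,dx$ (note $\nabla v_n\equiv 0$ for $v_n\in SBV_0$, so as written the expression would be trivial); and your parenthetical that no hypothesis beyond $(F_1)$--$(F_2)$ is used is essentially right, though $(F_1)$ (continuity in $q$) is genuinely needed so that $f(v_n(\cdot),\cdot)$ is Carath\'eodory and the Sobolev relaxation theorem applies with $v_n$ frozen.
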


The following result is analogous to \cite[Proposition 2.4]{FM1} and it is devoted to replace the test functions in \eqref{calFG} by smooth ones.  We will omit the proof, and just observe that i) follows the arguments in \cite{AF} with the application of Morse's measure covering theorem (c.f. \cite[Theorem 1.147]{FL}) .

\begin{proposition}\label{prop2.4FM1}
Let $f:\mathbb R^m \times \mathbb{R}^{d\times N}\rightarrow [0, +\infty]$  %and $g:\mathbb R^m \times \mathbb R^m \times S^{N-1} \to [0, +\infty]$
 be a function satisfying  $(F_1)-(F_3)$ %and $(G_1)-(G_3)$, respectively.
and let $Qf$ be given by \eqref{Qfbar}.

\begin{enumerate}
\item[i)] Let $B$ be a ball in $\mathbb{R}^{N}.$ If
\begin{equation}
{\overline F_0}(v, u;B)\leq\underset{n\rightarrow
\infty}{\lim\inf}\left(\int_{B}Qf\left(  v_n,\nabla u_n\right)  dx + \int_{J_{v_n}\cap B}g(v_n^+, v_n^-, \nu_{v_n})d {\cal H}^{N-1}\right)
\label{lsc}%
\end{equation}
holds for every $(v_n,u_{n}), (v,u) \in SBV_0(\Omega;\mathbb R^m)\times W^{1,1}\left(  \Omega;\mathbb{R}^{d}\right)  $
 such
that $(v_n,u_n)\rightarrow (v,u)$ in $L^1\left(  \Omega;\mathbb R^m \right) \times L^{1}\left(  \Omega;\mathbb{R}^{d}\right) $ then it holds for all open bounded sets $\Omega
\subset\mathbb{R}^{N}.$

\item[ii)] For every $(v,u) \in L^1(\Omega;\mathbb R^m)\times L^1(\Omega;\mathbb R^d)$, $\{(v_n,u_n)\} \subset SBV_0( \Omega;\mathbb R^m) \times W^{1,1}(\Omega;\mathbb{R}^d)$
such that $(v_n, u_{n})\rightarrow (v,u)$ in $L^1(\Omega;\mathbb R^m)\times L^{1}\left(  \Omega;\mathbb{R}^{d}\right)$ 
there exists $\left\{  ({\widetilde v}_{n}, \widetilde{u}_{n})\right\}  \subset C_{0}^{\infty}\left(  \mathbb{R}^{N};\mathbb{R}^m\right)   \times   
C_{0}^{\infty}\left(  \mathbb{R}^{N};\mathbb R^d\right)  $ such that $(\widetilde{v}_n, \widetilde{u}%
_{n})\rightarrow (v,u)$ strictly in $BV(\Omega;\mathbb R^m) \times BV\left(  \Omega;\mathbb{R}^{d}\right)  $ and

\[
\displaystyle{\liminf_{n \to \infty} \int_{\Omega}Qf\left(
\widetilde{v}_n,\nabla\widetilde{u}_n\right) dx
=\liminf_{n\to \infty}\int_\Omega Qf(v_n,\nabla u_n) dx.} 
%\int_{\Omega\cap J_{v_n}}
%g(v^+_n, v^-_n, \nu_{v_n}) d {\cal H}^{N-1}  \right)}
\]
\end{enumerate}
\end{proposition}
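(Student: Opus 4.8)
\textbf{Plan of proof for Proposition \ref{prop2.4FM1}.}

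The strategy is standard for ``transferring'' a lower semicontinuity inequality from balls to arbitrary open sets via a localization/measure argument, together with a mollification step for part ii). I will carry the two parts out in order.

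\medskip

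\emph{Part i): from balls to arbitrary open bounded sets.} Fix $\Omega$ open and bounded and a sequence $(v_n,u_n)\to(v,u)$ in $L^1\times L^1$ with $(v,u)\in SBV_0(\Omega;\mathbb R^m)\times W^{1,1}(\Omega;\mathbb R^d)$; we may assume $\liminf_n\big(\int_\Omega Qf(v_n,\nabla u_n)\,dx+\int_{J_{v_n}\cap\Omega}g(\cdots)\,d\mathcal H^{N-1}\big)=:\Lambda<+\infty$, and passing to a subsequence (not relabelled) that the $\liminf$ is a limit. Define, for each $n$, the nonnegative Radon measure $\mu_n:= Qf(v_n,\nabla u_n)\,\mathcal L^N\lfloor\Omega + g(v_n^+,v_n^-,\nu_{v_n})\,\mathcal H^{N-1}\lfloor(J_{v_n}\cap\Omega)$. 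Since $\sup_n\mu_n(\Omega)<+\infty$, up to a further subsequence $\mu_n\overset{*}{\rightharpoonup}\mu$ in $\mathcal M(\Omega)$ for some nonnegative finite Radon measure $\mu$, with $\mu(\Omega)\le\Lambda$. For $\mathcal L^N$-a.e.\ $x_0\in\Omega$ and for all but countably many radii $\varrho$ with $B(x_0,\varrho)\subset\subset\Omega$ we have $\mu(\partial B(x_0,\varrho))=0$, hence $\mu_n(B(x_0,\varrho))\to\mu(B(x_0,\varrho))$, and restricting the sequence $(v_n,u_n)$ to $B=B(x_0,\varrho)$ and invoking the hypothesis \eqref{lsc} on balls gives
\[
\overline{F_0}(v,u;B(x_0,\varrho))\le\liminf_{n\to\infty}\mu_n(B(x_0,\varrho))=\mu(B(x_0,\varrho)).
\]
Now I would use the definition \eqref{representationFG} of $\overline{F_0}$: the three pieces of $\overline{F_0}(v,u;\cdot)$ (the absolutely continuous $Qf$-term, the Cantor $Qf^\infty$-term, and the jump $K_3$-term) are each the restriction to the relevant Borel set of a nonnegative Radon measure $\lambda:=\lambda_{\mathrm{ac}}+\lambda_c+\lambda_j$ on $\Omega$ (here one uses that $Qf\ge0$, $Qf^\infty\ge0$ and, by Proposition \ref{propK3}(d), $K_3\le C(|v^+-v^-|+|u^+-u^-|+1)$, so the jump term is finite, being controlled by $|D(v,u)|(\Omega)+\mathcal H^{N-1}(J_v\cap\Omega)<\infty$). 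The displayed inequality says $\lambda(B(x_0,\varrho))\le\mu(B(x_0,\varrho))$ for a covering-rich family of balls, so by Morse's measure covering theorem (\cite[Theorem 1.147]{FL}), applied as in \cite{AF}, one upgrades this to $\lambda\le\mu$ as measures on $\Omega$; in particular $\overline{F_0}(v,u;\Omega)=\lambda(\Omega)\le\mu(\Omega)\le\Lambda$, which is the desired inequality \eqref{lsc} on $\Omega$.

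\medskip

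\emph{Part ii): reduction to smooth test sequences.} Given $(v,u)\in L^1\times L^1$ and $(v_n,u_n)\in SBV_0\times W^{1,1}$ converging to $(v,u)$ in $L^1\times L^1$, I may assume $\liminf_n\int_\Omega Qf(v_n,\nabla u_n)\,dx<+\infty$ (otherwise nothing to prove), hence by $(F_2)$ and $Qf\le\beta(1+|z|)$ (Proposition \ref{continuityQfbar}) the sequence $\{\nabla u_n\}$ is bounded in $L^1$, so $\{u_n\}$ is bounded in $BV(\Omega;\mathbb R^d)$ and $u\in BV$. Extend each $(v_n,u_n)$ to $BV(\mathbb R^N)$ and mollify: set $(\widetilde v_n,\widetilde u_n):=\rho_{\varepsilon_n}*(v_n,u_n)$ with $\varepsilon_n\downarrow0$ chosen fast enough (diagonalizing) that $(\widetilde v_n,\widetilde u_n)\to(v,u)$ in $L^1$ and $|D\widetilde u_n|(\Omega)\to|Du|(\Omega)$, i.e.\ strict convergence in $BV$ (and likewise the $v$-component, using $v\in SBV_0\subset BV$); this is the standard construction underlying Proposition \ref{prop2.4FM1}, cf.\ \cite[Proposition 2.4]{FM1}. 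The equality of the two $\liminf$'s of $\int_\Omega Qf(\cdot,\nabla(\cdot))\,dx$ along the original and the mollified sequences then follows from the continuity and Lipschitz-type bounds on $Qf$ ($(F_1)$–$(F_3)$ for $Qf$, Proposition \ref{continuityQfbar}): one estimates $\big|\int_\Omega Qf(\widetilde v_n,\nabla\widetilde u_n)-\int_\Omega Qf(v_n,\nabla u_n)\big|$ by splitting off the $v$-dependence via $(F_3)$ (controlled by $\|\widetilde v_n-v_n\|_{L^1}$ times $(1+\|\nabla u_n\|_{L^1})$, which is $o(1)$ by the choice of $\varepsilon_n$) and the $\nabla u$-dependence via the argument of \cite{AF}/\cite[Proposition 2.4]{FM1} exploiting quasiconvexity of $Qf$ and the strict convergence $\nabla\widetilde u_n\,\mathcal L^N\overset{*}{\rightharpoonup}Du$; here Proposition \ref{thm2.3BBBF} (approximate differentiability/blow-up) and a De~Giorgi-type slicing are what make the mollified gradients ``see'' only the absolutely continuous part on a set of full measure.

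\medskip

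\emph{Main obstacle.} The routine part is the measure-theoretic bookkeeping in i); the genuine difficulty is the gradient-term comparison in ii), namely showing that mollification does not decrease (nor, by the reverse estimate, increase) the $Qf$-energy in the limit even though $u_n$ is only $BV$ and $Qf$ is merely of linear growth — this is exactly where quasiconvexity of $Qf$, the growth bound $(F_2)$, and the blow-up/Morse-covering machinery of \cite{AF} enter, and it is the step I would treat most carefully (or, as the statement says, cite from \cite{FM1}).
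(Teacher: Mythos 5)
Your part i) is correct and matches the approach the paper itself cites: one localizes by passing to a weak-$\ast$ limit $\mu$ of the energy measures $\mu_n$, applies the ball hypothesis on a covering family of balls with $\mu(\partial B)=0$, verifies that the three terms of $\overline{F_0}(v,u;\cdot)$ assemble into a finite nonnegative Radon measure $\lambda$ (using Proposition \ref{propK3}(d) for the jump term), and then upgrades $\lambda(B)\le\mu(B)$ on balls to $\lambda\le\mu$ on $\Omega$ via Morse's covering theorem \cite[Theorem 1.147]{FL}, exactly as in \cite{AF}. This is routine and in line with the paper's references.

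Part ii), however, contains two genuine gaps. First, the estimate you draw from $(F_3)$ is mis-factored: $(F_3)$ gives the bound
$\int_\Omega \lvert Qf(\widetilde v_n,\nabla u_n)-Qf(v_n,\nabla u_n)\rvert\,dx
\le L\int_\Omega\lvert\widetilde v_n-v_n\rvert\bigl(1+\lvert\nabla u_n\rvert\bigr)\,dx,$
which is an integral of a product, and it cannot be replaced by the product of the two $L^1$-norms $\|\widetilde v_n-v_n\|_{L^1}\,(1+\|\nabla u_n\|_{L^1})$ as you wrote; that factoring is simply false. For fixed $n$ the correct argument does not need $(F_3)$ at all: one has $\rho_\varepsilon\ast v_n\to v_n$ a.e.\ (along a subsequence) and $\rho_\varepsilon\ast\nabla u_n\to\nabla u_n$ in $L^1(\Omega)$, the integrands $Qf(\rho_\varepsilon\ast v_n,\rho_\varepsilon\ast\nabla u_n)\le\beta(1+\lvert\rho_\varepsilon\ast\nabla u_n\rvert)$ are uniformly integrable, and Vitali's theorem gives $\int_\Omega Qf(\rho_\varepsilon\ast v_n,\rho_\varepsilon\ast\nabla u_n)\,dx\to\int_\Omega Qf(v_n,\nabla u_n)\,dx$; then diagonalize.

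Second, and more importantly, the strict $BV$-convergence cannot be obtained by ``choosing $\varepsilon_n$ fast enough.'' For any $\varepsilon_n\downarrow 0$ one has $\lvert D\widetilde u_n\rvert(\Omega)=\int_\Omega\lvert\rho_{\varepsilon_n}\ast\nabla u_n\rvert\,dx\to\int_\Omega\lvert\nabla u_n\rvert\,dx$ up to a quantity that vanishes with $\varepsilon_n$, and the target $\int_\Omega\lvert\nabla u_n\rvert\,dx$ does not depend on $\varepsilon_n$. Since only $L^1$-convergence of $u_n$ to $u$ is assumed, $\liminf_n\int_\Omega\lvert\nabla u_n\rvert\,dx$ can exceed $\lvert Du\rvert(\Omega)$ strictly; your $\widetilde u_n$ then satisfies $L^1$-convergence to $u$ with $\sup_n\lvert D\widetilde u_n\rvert(\Omega)<\infty$, but not $\lvert D\widetilde u_n\rvert(\Omega)\to\lvert Du\rvert(\Omega)$. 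To match the paper's strict-convergence claim one must use the finer interpolation construction of \cite[Proposition 2.4]{FM1} (a De Giorgi-type averaging over cut-offs between $\rho_{\varepsilon_n}\ast u_n$ in the interior and $\rho_{1/n}\ast u$ near $\partial\Omega$), which is precisely where the quasiconvexity of $Qf$ and the linear-growth bound $(F_2)$ enter nontrivially; plain mollification of $u_n$ followed by a diagonal choice of $\varepsilon_n$ does not deliver it.
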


In order to achieve the integral representation in \eqref{mainthmgen} for the jump part, we need to modify
$\left\{  \left(  v_{n},u_{n}\right)  \right\}  $ to match the boundary in
such a way the new sequences will be in $\mathcal{A}_3\left(  v^+(x),v^-(x),u^{+}\left(
x_{0}\right)  ,u^{-}\left(  x_{0}\right)  ,\nu\left(  x_{0}\right)  \right)  $
given in  \eqref{A3}, and the energy doesn't increase. This is achieved in the next Lemma that for sake of simplicity is
stated in the unit cube $Q\subset\mathbb{R}^{N}$, and with the normal to the jump set $\nu=e_N$. The proof relies on the techniques of \cite[Lemma 3.5]{BDV}, \cite[Lemma 3.1]{FM2} and \cite[Lemma 4.4]{ABr1}.

\begin{lemma}
\label{Lemma4.1FM}Let $Q:=\left[  0,1\right]  ^{N}$ and%
\[
 v_0\left(  y\right)  :=\left\{
\begin{array}
[c]{lll}%
a &  & \text{if }x_{N}>0,\\
b &  & \text{if }x_{N}< 0,
\end{array}
\right.\qquad u_{0}\left(  y\right)  :=\left\{
\begin{array}
[c]{lll}%
c &  & \text{if }x_{N}> 0,\\
d &  & \text{if }x_{N} <0.
\end{array}
\right. 
\]
Let $\left\{  v_{n}\right\}  \subset
SBV_0\left(  Q;\mathbb R^m  \right) $ and $\{u_{n}\} \subset W^{1,1}\left(
Q;\mathbb{R}^{d}\right)  $, such that $v_n \to v_0$ in $L^{1}\left(
Q;\mathbb R^m  \right)  $ and $u_n\to u_0$  in $L^{1}\left(  Q;\mathbb{R}^{d}\right) .$   

If $\rho$ is a mollifier,
$\rho_{n}:=n^{N}\rho\left(  nx\right)  ,$ then there exists $\left\{  \left(  \zeta_{n},\xi_{n}\right)  \right\}  \in
\mathcal{A}_3\left(  a,b,c,d,e_{N}\right)  $ such that
\[
\zeta_{n}=v_0\text{ on }\partial Q,~\zeta_{n}\rightarrow v_0\text{ in
}L^{1}\left(  Q;\mathbb R^m\right),
\]
\[
\xi_{n}=\rho_{i\left(  n\right)  }\ast u_{0}\text{ on }\partial Q,~~\ \ \xi
_{n}\rightarrow u_{0}\text{ in }L^{1}\left(  Q;\mathbb{R}^{d}\right) 
\]%
and%
\begin{equation}\nonumber
\begin{array}{ll}
\displaystyle{\underset{n\rightarrow\infty}{\lim\sup}\left(  \int_{Q}Qf\left(
\zeta_{n},\nabla\xi_{n}\right)  dx+\int_{J_{\zeta_n}\cap Q}g(\zeta_n^+, \zeta_n^-, \nu_{\zeta_n})d {\cal H}^{N-1}\right)}
\\
\\
\displaystyle{\leq \underset{n\rightarrow\infty}{\lim\inf}\left(  \int_{Q}Qf\left( v_{n},\nabla u_{n}\right)  dx+\int_{J_{v_n}\cap Q}g(v_n^+, v_n^-, \nu_{v_n})d {\cal H}^{N-1}
\right).}
\end{array} 
\end{equation}

\end{lemma}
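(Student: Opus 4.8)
The plan is to construct the matching sequences $\{(\zeta_n,\xi_n)\}$ by a cut-off/de~Giorgi slicing argument near $\partial Q$, combined with a mollification of $u_0$ to control the boundary datum for the $u$-component, and a separate surgery on the $v$-component that forces it to equal $v_0$ near the lateral faces while spending negligibly extra jump energy. First I would, for $\delta>0$ small, foliate a boundary layer of $Q$ by a family of nested cubes $Q_{(1-t\delta)}$, $t\in\{1,\dots,M\}$, and on each annulus interpolate between $u_n$ (inside) and $\rho_{i(n)}\ast u_0$ (on $\partial Q$) using a Lipschitz cut-off function $\theta_t$; by averaging over $t$ and using the pigeonhole principle one selects, for each $n$, an index $t=t(n)$ for which the ``crossover'' contribution
\[
\int_{\{\theta_{t}\neq 0,1\}}Qf\bigl(v_n,\nabla u_n\nabla\theta_t+\dots\bigr)\,dx
\]
is bounded by $C/M$ times the total energy, which is uniformly bounded by $(F_2)$, $(G_2)$ and the hypothesis; letting $M\to\infty$ diagonally kills this term. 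The linear growth $(F_2)$ is exactly what makes this interpolation affordable, since $Qf(v,\nabla u)\le\beta(1+|\nabla u|)$ and $|u_n-\rho_{i(n)}\ast u_0|\to 0$ in $L^1$ by the choice of $i(n)$ slaved to $n$.

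Next, for the $v$-component I would similarly truncate: set $\zeta_n:=v_n$ on the inner cube $Q_{(1-t(n)\delta)}$ and $\zeta_n:=v_0$ on the complementary layer, gluing along $\partial Q_{(1-t(n)\delta)}$. This creates an interface jump supported on $\partial Q_{(1-t(n)\delta)}$ whose $\mathcal H^{N-1}$-measure is $O(1)$ and whose $g$-cost is controlled by $(G_2)$ by $C(1+\|v_n-v_0\|_{L^\infty}\wedge\dots)$; since $v_n\to v_0$ only in $L^1$ and not in $L^\infty$ I would instead first replace $v_n$ by a truncation $v_n^{R}$ at level $R$ (using that $v_0$ is bounded and $f$, $g$ satisfy the Lipschitz-type bounds $(F_3)$, $(G_2)$) so that the glue cost on the thin shell is $O(\delta^{0})$ but, crucially, is again averaged over the $M$ choices of $t$ so that the selected one costs $O(1/M)\to 0$; one also uses that on the lateral faces $v_0$ is genuinely two-valued so that the new function lies in $SBV_0$ and is $1$-periodic in $\nu_1,\dots,\nu_{N-1}$ after the obvious periodic reflection near those faces. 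Here one invokes Remark \ref{applicationofprop3.4}(iii)--(iv) to pass freely between $SBV_0\cap L^\infty$ and $BV(Q;T)$ representations, so the truncation step is legitimate.

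Then I would verify the three asserted properties: $\zeta_n=v_0$ and $\xi_n=\rho_{i(n)}\ast u_0$ on $\partial Q$ hold by construction on the outermost shell; the $L^1$ convergences $\zeta_n\to v_0$, $\xi_n\to u_0$ follow because the modifications take place on a layer of vanishing measure together with the $L^1$-convergence of $v_n,u_n$ and of $\rho_{i(n)}\ast u_0\to u_0$; finally the energy inequality
\[
\limsup_{n\to\infty}\Bigl(\int_Q Qf(\zeta_n,\nabla\xi_n)\,dx+\int_{J_{\zeta_n}\cap Q}g(\zeta_n^+,\zeta_n^-,\nu_{\zeta_n})\,d\mathcal H^{N-1}\Bigr)\le\liminf_{n\to\infty}\Bigl(\int_Q Qf(v_n,\nabla u_n)\,dx+\int_{J_{v_n}\cap Q}g(v_n^+,v_n^-,\nu_{v_n})\,d\mathcal H^{N-1}\Bigr)
\]
is obtained by splitting $Q$ into the inner cube (where the integrand is unchanged, hence contributes $\le$ the right-hand side), plus the crossover and glue terms on the shell (which tend to $0$ after the diagonalization in $M$, $R$, and $i(n)$), using $(F_3)$ to compare $Qf(\zeta_n,\cdot)$ with $Qf(v_n,\cdot)$ on the inner region where $\zeta_n=v_n$ but we may have reassigned values on a small set, and the continuity/growth of $Qf^\infty$ from Remark \ref{propfinfty}.

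The main obstacle I expect is the simultaneous handling of the two crossover terms: one must interpolate $u$ and truncate/glue $v$ \emph{on the same shell}, and the $g$-term in the energy sees the newly created interface $\partial Q_{(1-t(n)\delta)}$ of the $v$-surgery, which is not small in $\mathcal H^{N-1}$-measure but only small \emph{on average over the slicing index}; so the delicate point is to arrange the pigeonhole so that one single index $t(n)$ makes \emph{both} the $u$-interpolation error and the $v$-glue jump-energy small, and then to diagonalize consistently among the parameters $\delta\to0$ (equivalently $M\to\infty$), $R\to\infty$ and the mollification scale $i(n)\to\infty$, while keeping $\zeta_n\in SBV_0(Q;\mathbb R^m)$, periodic in the first $N-1$ directions, and equal to the prescribed traces on $\partial Q$. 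This is precisely where the techniques of \cite[Lemma 3.5]{BDV}, \cite[Lemma 3.1]{FM2} and \cite[Lemma 4.4]{ABr1} are combined.
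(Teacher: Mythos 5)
Your proposal follows the same overall strategy as the paper's proof: an $L^\infty$-truncation of the test pair (the paper's Step~1, after \cite[Lemma 3.5]{BDV}), mollification of $u_0$ to supply a $W^{1,1}$ boundary datum, a family of nested cubes with Lipschitz cut-offs to interpolate the $u$-component, a Fleming--Rishel selection of a transition level for the sharp $v$-surgery, and then a pigeonhole over the layers followed by a diagonalization. The references and the identification of the main difficulty (a single index must simultaneously tame the $u$-crossover integral and the $v$-glue cost) also match.

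However, there is a genuine error at the step where you argue that the $v$-glue interface becomes harmless. You write that the new interface along $\partial Q_{(1-t(n)\delta)}$ has $\mathcal H^{N-1}$-measure $O(1)$ and $g$-cost $O(\delta^0)$, ``but, crucially, is again averaged over the $M$ choices of $t$ so that the selected one costs $O(1/M)\to 0$.'' That conclusion does not follow. Averaging or pigeonholing gives a gain of order $1/M$ only for quantities whose total over all $M$ layers is uniformly bounded; this is precisely the case for the bulk crossover term $\int_{L_\delta}(1+|\nabla u_n|+|\nabla w_n|)\,dx$ and for the weighted jump term $\int_{\partial^*\{\varphi<t\}}|v_n-v_0|\,d\mathcal H^{N-1}$, because the layers are disjoint and fill a shell of vanishing measure. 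It is not the case for the raw $\mathcal H^{N-1}$-measure of the cut surface: in every layer, $\mathcal H^{N-1}(\partial^*\{\varphi_i<t\})$ is comparable to the surface area of the unit cube, hence $O(1)$, and the Fleming--Rishel average $\int_Q|\nabla\varphi_i|\,dx$ is again $O(1)$. Summing over $M$ layers gives $O(M)$, so the averaged contribution is $O(1)$, not $O(1/M)$, and selecting a ``good'' layer cannot make it small. Consequently, as written, the estimate of the ``$1$'' part of $(G_2)$ on the new interface fails and the energy inequality in the lemma would not be achieved.

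What is actually needed is the observation that the new contribution to $J_{\zeta_n}$ along the cut surface is not all of $\partial^*\{\varphi_i<t\}$, but only the subset where the inner trace (coming from $v_n$) differs from the outer trace (coming from $v_0$). Controlling the $\mathcal H^{N-1}$-measure of \emph{that} set does not follow from $L^1$-convergence of $v_n$ alone: for an $\mathbb R^m$-valued $v_n$ the two traces can differ on a set of full measure on the cut surface. One first reduces to the case $v_n\in BV(Q;T)$ with $T$ a finite set (via Remark~\ref{applicationofprop3.4}\,(iii) and Lemma~\ref{lemma31BCD}); then, with $\delta_0:=\min_{s\neq s'\in T\cup\{a,b\}}|s-s'|>0$, a Chebyshev inequality gives
\[
\mathcal H^{N-1}\bigl(\partial^*\{\varphi_i<t\}\cap\{v_n^{\mathrm{tr}}\neq v_0^{\mathrm{tr}}\}\bigr)\le\frac{1}{\delta_0}\int_{\partial^*\{\varphi_i<t\}}|v_n-v_0|\,d\mathcal H^{N-1},
\]
and this quantity \emph{is} amenable to Fleming--Rishel averaging over $t$ and to the pigeonhole over the layers, since it is slaved to $\|v_n-v_0\|_{L^1(Q\setminus Q_0)}/\alpha_n\to 0$ with the usual choice $\alpha_n\approx\sqrt{\|v_n-v_0\|_{L^1}}$. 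Without this finite-range reduction, the naive ``$O(1/M)$'' assertion for the glue cost is the gap.
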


\begin{proof}
Without loss of generality, we may assume that
\[
\begin{array}{ll}
\displaystyle{\underset{n\rightarrow\infty}{\lim\inf}\left(\int_{Q}Qf\left(  v_{n},\nabla
u_{n}\right)  dx+\int_{J_{v_n}\cap Q}g(v_n^+, v_n^-,\nu_{v_n})d {\cal H}^{N-1}\right)}\\
\\
\displaystyle{=\lim_{n\to \infty}\left(\int_{Q}Qf\left(  v_{n},\nabla
u_{n}\right)  dx+\int_{J_{v_n}\cap Q}g(v_n^+, v_n^-,\nu_{v_n})d {\cal H}^{N-1}\right)  <+\infty.}
\end{array}
\]

The proof is divided in two steps.

\noindent {\bf Step 1.} First we claim that for every $\varepsilon >0$, denoted $ \|(v_0, u_0)\|_{\infty}$ by $M_0$, there exist a sequence $ \{\overline u_n\} \subset W^{1,1}(Q;\mathbb R^d)\cap L^\infty(Q;\mathbb R^d)$ and
a sequence $ \{\overline v_n\} \subset SBV_0(Q;\mathbb R^m)\cap L^\infty(Q;\mathbb R^m)$, and a constant $C>0$ such that $\|{\overline u_n}\|_{\infty}, \|\overline v_n\|_{\infty} \leq C$ for every $n$ and 
\begin{equation}\label{as3.7BBBF}
\begin{array}{ll}
\displaystyle{\underset{n\rightarrow\infty}{\lim\inf}\left(\int_{Q}Qf\left(  {\overline v_{n}},\nabla
\overline u_{n}\right)  dx+\int_{J_{{\overline v_n}}\cap Q}g({\overline v_n}^+, {\overline v_n}^-,\nu_{\overline v_n})d {\cal H}^{N-1}\right)}
\\
\\
\displaystyle{\leq \underset{n\rightarrow\infty}{\lim}\left(\int_{Q}Qf\left(  v_{n},\nabla
u_{n}\right)  dx+\int_{J_{v_n}\cap Q}g(v_n^+, v_n^-,\nu_{v_n})d {\cal H}^{N-1}\right)+ \varepsilon.}
\end{array}
\end{equation} 

To achieve the claim we can apply a truncation argument as in \cite[Lemma 3.5]{BDV}, (c.f. also \cite[Lemma 3.7]{BBBF}). 
%We start by fixing $k \in \mathbb N$ and letting $i \in\{\lambda, \dots, k%\}$, where $\lambda \in \mathbb N$ is the least integer strictly less than %$\ln (\|v_0\|_{\infty}+ \|u_0\|_{\infty}+ 1)$. 
For $a_i \in \mathbb R$ to be determined later depending on $\e$ and $M_0$, we define $\phi_i \in W^{1, \infty}_0(\mathbb R^{m+d};\mathbb R^{m+d})$ such that
\begin{equation}\label{Lipschitztruncature}
\phi_i(x)=\left\{
\begin{array}{ll}
x, & |x| < a_i,\\
0, &|x|\geq a_{i+1},
\end{array}
\right.
\end{equation}
$\|\nabla \phi_i\|_{\infty}\leq 1$, with $x \in \mathbb R^{m+d}$, and $x\equiv(x_1,x_2),  x_1 \in \mathbb R^m,  x_2 \in \mathbb R^d$.

For any $n \in \mathbb N$ and for any $i $ as above, let $(v^i_n,u^i_n)\in SBV_0(Q;\mathbb R^m) \times W^{1,1}(Q;\mathbb R^d)\cap L^\infty(Q;\mathbb R^{m+d})$ be  given by
$$
(v^i_n, u^i_n):=\phi_i(v_n,u_n).
$$ 
Considering the bulk part of the energy $F$ in \eqref{FG}, and exploiting Proposition \ref{propqcx} and the growth conditions on $f$ and $Qf$, we have
$$
\begin{array}{ll}
\displaystyle{\int_Q Qf(v^i_n,\nabla u^i_n)dx = \int_{Q \cap \{|(v_n, u_n)|\leq a_i\}}Qf(v_ n,\nabla u_n)dx + \int_{Q\cap \{|(v_n,u_n)|> a_{i+1}\}}Qf(0,0)dx }
\\
\\
\displaystyle{+\int_{Q \cap \{a_i< |(v_n, u_n)|\leq a_{i+1}\}} Qf(v_n^i, \nabla u_n^i)dx }\\
\\
\displaystyle{\leq \int_Q Qf(v_n, \nabla u_n)dx + C |Q \cap \{|(v_n, u_n)|> a_{i+1}\}|+ C_1 \int_{A \cap \{a_i < |(v_n, u_n)|\leq a_{i+1}\}}(1+|\nabla u_n|)dx}. 
\end{array}
$$
Concerning the surface term of the energy in \eqref{FG}, since $((v_n^i)^\pm, (u_n^i)^\pm)=\phi_i(v_n^\pm, u_n^\pm)$, and without loss of generality one can assume that $|(v_n^-, u_n^-)|\leq |(v_n^+,u_n^+)|$ ${\cal H}^{N-1}$- a.e. on $J_{(v_n,u_n)}$,
we have that
$$
\begin{array}{ll}
\displaystyle{\int_{Q\cap J_{v^i_n}}g((v_n^i)^+, (v_n^i)^-, \nu_{v_n^i})d {\cal H}^{N-1}}\\
\\
\displaystyle{\leq \int_{J_{v_n} \setminus \{ a_{i+1} \leq |(v_n^-,u_n^-)|\}\cap Q} g(\phi_i((v_n^i)^+, (u_n^i)^+), \phi_i((v_n^i)^-, (u_n^i)^-), \nu_{(v_n^i, u_n^i)})d {\cal H}^{N-1}.}
\end{array}
$$ 
Arguing as in \cite[Lemma 3.5]{BDV} (cf. also \cite[Remark 3.6]{BDV}), and exploiting the growth conditions on $g$ we can estimate $\displaystyle{\frac{1}{k}\sum_{i=1}^k F(v_n^i, u_n^i; Q)}$ for any fixed $k \in \mathbb N$, and for every $n \in \mathbb N$, with $k$ independent on $n$.
Then
$$
\begin{array}{ll}
\displaystyle{\frac{1}{k}\sum_{i=1}^k F(v_n^i, u_n^i; Q) \leq F(v_n, u_n; Q) + \frac{1}{k} \sum_{i=2}^k\left(C |Q \cap \{|(v_n, u_n)|> a_{i+1}\}|+ C_4 \int_ {J_2^i \cap Q} (1+ |v_n^-|)d {\cal H}^{N-1}\right)}\\
\\
\displaystyle{+\frac{1}{k}\left(c_2 \int_Q (1+|\nabla u_n|)dx + 3 C_4 \int_{J_{v_n}\cap Q} (1+|v_n^+-v_n^-|)d {\cal H}^{N-1}\right),}
\end{array}
$$
where $J^i_2:=\{|v_n^-|\leq a_i, |v_n^+|\geq a_{i+1}\}$.
By the growth conditions there exists  a constant $C$ such that
$$
\displaystyle{\left(c_2 \int_Q (1+|\nabla u_n|)dx + 3 c_4 \int_{J_{v_n}\cap Q} (1+|v_n^+-v_n^-|)d {\cal H}^{N-1}\right)\leq C,}
$$
for every $n \in \mathbb N$. Choose $\displaystyle{k \in \mathbb N}$ such that $\displaystyle{\frac{c}{k}\leq \frac{\varepsilon}{3}}$.
Moreover
$$
\displaystyle{C \geq\int_ {J_2^i \cap Q} |v_n^+- v_n^-|d {\cal H}^{N-1} \geq \int_ {J_2^i \cap Q} (|v_n^+ |- |v_n^-|)d {\cal H}^{N-1} \geq (a_{i+1}-a_i){\cal H}^{N-1}(J_2^i \cap Q)},  
$$
whence
$$
\displaystyle{\int_{J_2^i\cap Q}(1+ |v_n^-|)d{\cal H}^{N-1}\leq C \frac{1+ a_i}{a_{i+1}-a_i}.}
$$
The sequence $\{a_i\}$  can be chosen recursively as follows
$$
\begin{array}{ll}
C_2 |Q \cap \{|(v_n, u_n)| > a_{i}\}|\leq \frac{\varepsilon}{3}, \hbox{ for every }n \in \mathbb N, a_{i+1} \geq M_0,\\
\\
c_4C\frac{1+a_i}{a_{i +1}- a_i} \leq \frac{\varepsilon}{3} \hbox{ for every }i \in \mathbb N,  
\end{array}
$$
which is possible since $\{(v_n, u_n)\}$ is bounded in $L^1$.
Thus we obtain
$$
\frac{1}{k}\sum_{j=1}^k F(v_n^{i_j}, u_n^{i_j}; Q) \leq F(v_n, u_n; Q)+ \varepsilon.
$$
Therefore for every $n \in \mathbb N$ there exists $i(n)\in \{1,\dots, k\}$ such that
$$
F(v_n^{i_n}, u_n^{i_n}; Q) \leq F(v_n, u_n; Q)+ \varepsilon.
$$
 It suffices to define $\overline v_n:= v_n^{i_n}$ and ${\overline u_n}:= u_n^{i_n}$ to achieve \eqref{as3.7BBBF} and observe that $\{\overline u_n\}$ and $\{\overline v_n\}$ are bounded in $L^\infty$, by construction.

\noindent {\bf Step 2.} This step is devoted to the construction of sequences $\{\xi_n\}$ and $\{\zeta_n\}$ as in the statement.  Let ${\overline v_n}$ and ${\overline u_n}$ be as in $i)$. Define
\[
w_{n}\left(  x\right)  :=\left(  \rho_{n}\ast u_{0}\right)  \left(  x\right)
=\int_{B\left(  x,\frac{1}{n}\right)  }\rho_{n}\left(  x-y\right)
u_{0}\left(  y\right)  dy.
\]
As $\rho$ is a mollifier, we have for each tangential direction $i=1,\dots
,N-1,$ $w_{n}\left(  x+e_{i}\right)  =w_{n}\left(  x\right)  $ and so%
\[
w_{n}\left(  y\right)  =\left\{
\begin{array}
[c]{lll}%
c &  & \text{if }x_{N}>\frac{1}{n},\\
 d &  & \text{if }x_{N}<-\frac{1}{n},
\end{array}
\right.  ~\ \ \ \left\Vert \nabla w_{n}\right\Vert _{\infty}=O\left(
n\right)  ,~~\ w_{n}\in\mathcal{A}_1\left( c,d,e_{N}\right),
\] where 
$$
\begin{array}{ll}
{\cal A}_1(c,d,e_N):=\left\{ u\in W^{1,1}(Q_{\nu};\mathbb{R}^{d}): u(y)= c \hbox{ if } y \cdot\nu=\frac
{1}{2},  u(y)= d \hbox{ if } y\cdot\nu=- \frac{1}{2},\right.\\
\\
\left.\hbox{ with }u \;1-\hbox{periodic in }\nu_{1}, \dots,
\nu_{N-1} \hbox{ directions} \right\}. 
\end{array}
$$ 

Let $\alpha_{n}:=\sqrt{\left\Vert {\overline u_{n}}-w_{n}\right\Vert _{L^{1}\left(
Q;\mathbb{R}^{d}\right)  }+\left\Vert {\overline v_{n}}-v_{0}\right\Vert
_{L^{1}\left(  Q\right)  }},~$\newline$k_{n}:=n\left[  1+\left\Vert
{\overline u_{n}}\right\Vert _{W^{1,1}\left(  Q;\mathbb{R}^{d}\right)  }+\left\Vert
w_{n}\right\Vert _{W^{1,1}\left(  Q;\mathbb{R}^{d}\right)  }+\left\Vert
{\overline v_{n}}\right\Vert _{BV\left(  Q\right)  }+\left\Vert v_{0}\right\Vert
_{BV\left(  Q\right)  }+ {\cal H}^{N-1}(J_{{\overline v_n}})\right]  ,~s_{n}:=\frac{\alpha_{n}}{k_{n}}$ where
$\left[  k\right]  $ denotes the largest integer less than or equal to $k.$
Since $\alpha_{n}\rightarrow0^{+},$ we may assume that $0\leq\alpha_{n}<1,$
and set $Q_{0}:=\left(  1-\alpha_{n}\right)  Q,~Q_{i}:=\left(  1-\alpha
_{n}+is_{n}\right)  Q,~i=1,\dots,k_{n}.$

Consider a family of cut-off functions $\varphi_{i}\in C_{0}^{\infty}\left(
Q_{i}\right)  ,$ $0\leq\varphi_{i}\leq1,~\varphi_{i}=1$ in $Q_{i-1}%
,~\left\Vert \nabla\varphi_{i}\right\Vert _{\infty}=O\left(  \frac{1}{s_{n}%
}\right)  $ for $i=1,\dots,k_{n},$ and define
\[
u_{n}^{\left(  i\right)  }\left(  x\right)  :=\left(  1-\varphi_{i}\left(
x\right)  \right)  w_{n}\left(  x\right)  +\varphi_{i}\left(  x\right)
{\overline u_n}\left(  x\right)  .
\]
Since $u_{n}^{\left(  i\right)  }=w_{n}$ on $\partial Q$ we have that
$u_{n}^{\left(  i\right)  }\in\mathcal{A}_1\left(  c,d,e_{N}\right).  $ Clearly,%
\[
\nabla u_{n}^{\left(  i\right)  }=\nabla {\overline u_n}\text{ in }Q_{i-1},\qquad\nabla
u_{n}^{\left(  i\right)  }=\nabla w_{n}\text{ in }Q\backslash Q_{i},
\]
and in $Q_{i}\backslash Q_{i-1}$%
\[
\nabla u_{n}^{\left(  i\right)  }=\nabla w_{n}+\varphi_{i}\left(  \nabla
{\overline u_n}-\nabla w_{n}\right)  +\left(  {\overline u_n}-w_{n}\right)  \otimes\nabla
\varphi_{i}.
\]
For $0<t<1$ define
\[
v_{n,i}^{t}\left(  x\right)  :=\left\{
\begin{array}
[c]{lll}%
v_0\left(  x\right)  &  & \text{if }\varphi_{i}\left(  x\right)  <t,\\
{\overline v_n}\left(  x\right)  &  & \text{if }\varphi_{i}\left(  x\right)  \geq t.
\end{array}
\right.
\]
Clearly, $\lim_{n\rightarrow\infty}\left\Vert v_{n,i}^{t}-v
_{0}\right\Vert _{L^{1}\left(  Q\right)  }=0$ as $n\rightarrow\infty,$ independently on $i$ and $t$. %Indeed,
%\begin{align*}
%\int_{Q}\left\vert v_{n,i}^{t}\left(  x\right)  -v_0\left(  x\right)
%\right\vert dx  &  \leq\int_{A_t}\left\vert v_0\left(  x\right)
%-v_{0}\left(  x\right)  \right\vert dx+\int_{Q\backslash A_t}\left\vert
%{\overline v_n}\left(  x\right)  -v_0\left(  x\right)  \right\vert dx\\
%&  \leq0+\int_{Q}\left\vert {\overline v_n}\left(  x\right)  -v_0\left(
%x\right)  \right\vert dx\rightarrow 0
%\end{align*}
%as $n\rightarrow\infty,$ independently on $i$ and $t$, with $A_t%
%:=Q\cap\left\{  x\in Q:\varphi_{i}\left(  x\right)  <t\right\}  .$
For every $n$ and $i,$ by Fleming-Rishel formula  \eqref{FR} it is possible to find
$t_{n,i}\in\left]  0,1\right[  $ such that
\begin{align*}
\left\{  x\in Q:\varphi_{i}\left(  x\right)  <t_{n,i}\right\}   &
\in\mathcal{P}\left(  Q\right)  ,\\
\mathcal{H}^{N-1}\left(  J_{v_{0}}\cap\left\{  x\in Q:\varphi_{i}\left(
x\right)  =t_{n,i}\right\}  \right)   &  =\mathcal{H}^{N-1}\left(  J_{\overline v_n}\cap\left\{  x\in Q:\varphi_{i}\left(  x\right)  =t_{n,i}\right\}
\right)  =0,
\end{align*}
where ${\cal P}(Q)$ denotes the family of sets with finite perimeter in $Q$. 
Let%
\[
v_{n,i}^{t_{n,i}}:=\left\{
\begin{array}
[c]{lll}%
v_0\left(  x\right)  &  & \text{in }Q\cap\left\{  x\in Q:\varphi
_{i}\left(  x\right)  <t_{n,i}\right\}  ,\\
{\overline v_n}\left(  x\right)  &  & \text{in }Q\cap\left\{  x\in Q:\varphi
_{i}\left(  x\right)  \geq t_{n,i}\right\}  .
\end{array}
\right.
\]
Clearly, $\lim_{n\rightarrow\infty}\left\Vert v_{n,i}^{t_{n,i}}-v
_{0}\right\Vert _{L^{1}\left(  Q\right)  }=0$,  $\left\{v^{t_{n,i}}_{n,i}\right\}\subset SBV_0(Q;\mathbb R^m)\cap L^\infty(Q;\mathbb R^m)$ and, from Step 1,  it is uniformly bounded on $n, i$ and $t$.

We have
\begin{align*}
&  \int_{Q}Qf\left( v_{n,i}^{t_{n,i}},\nabla u_{n}^{\left(  i\right)
}\right)  dx+\int_{J_{v_{n,i}^{t_{n,i}}}\cap Q}g( (v_{n,i}^{t_{n,i}})^+,(v_{n,i}^{t_{n,i}})^-, \nu_{v_{n,i}^{t_{n,i}}}) d {\cal H}^{N-1} \\
&  \leq\int_{Q}Qf\left( {\overline v_n},\nabla {\overline u_n}\right)  dx+C\int_{Q_{i}%
\backslash Q_{i-1}}\left(  1+\left\vert {\overline u_n}\left(  x\right)  -w_{n}\left(
x\right)  \right\vert \frac{1}{s_{n}}+\left\vert \nabla {\overline u_n}\left(  x\right)
\right\vert +\left\vert \nabla w_{n}\left(  x\right)  \right\vert \right)
dx\\
&  +C\int_{Q\backslash Q_{i}}\left(  1+\left\vert \nabla w_{n}\left(
x\right)  \right\vert \right)  dx+\int_{
Q\cap\left\{  \varphi_{i}>t_{n,i}\right\}  _{1}} g({\overline v_n}^+, {\overline v_n}^-, \nu_{\overline  v_n})d {\cal H}^{N-1} \\
&  +\left\vert Dv_{n,i}^{t_{n,i}}\right\vert \left(  \left(  Q\cap\left\{
\varphi_{i}>t_{n,i}\right\}  _{0}\right)  \right) + {\cal H}^{N-1} \left(  \left(  Q\cap\left\{
\varphi_{i}>t_{n,i}\right\}  _{0}\right)  \right) +\left\vert Dv
_{n,i}^{t_{n,i}}\right\vert \left(  \partial^{\ast}\left\{  \varphi
_{i}<t_{n,i}\right\}  \right) \\
 & + {\cal H}^{N-1}\left(  \partial^{\ast}\left\{  \varphi
_{i}<t_{n,i}\right\}  \right)\\
&  \leq\int_{Q}Qf\left(  {\overline v_n},\nabla {\overline u_n}\right)  dx+I_{1}+
\int_{Q \cap J_{\overline v_n}} g({\overline v_n}^+, {\overline v_n}^-, \nu_{\overline  v_n})d {\cal H}^{N-1}
 +C \left\vert Dv_0\right\vert
\left(  Q\backslash Q_{i}:\left\{  \varphi_{i}>t_{n,i}\right\}  _{0}\right) \\
& +\frac{C}{s_{n}}\int_{Q_{i}\backslash Q_{i-1}} |{\overline v}_n-v_0|dx+ \frac{1}{s_n}O(s_n),
\end{align*}
where
\[
\left\{  \varphi_{i}>t_{n,i}\right\}  _{1}:=\left\{  x\in Q:\frac{\left\vert
\left\{  x\in Q:\varphi_{i}>t_{n,i}\right\}  \cap B_{\rho}\left(  x\right)
\right\vert }{\left\vert B_{\rho}\left(  x\right)  \right\vert }=1\right\},
\]

\[
\left\{  \varphi_{i}>t_{n,i}\right\}  _{0}:=\left\{  x\in Q:\frac{\left\vert
\left\{  x\in Q:\varphi_{i}>t_{n,i}\right\}  \cap B_{\rho}\left(  x\right)
\right\vert }{\left\vert B_{\rho}\left(  x\right)  \right\vert }=0\right\},
\]
 $I_1:=$  $\displaystyle{C\int_{Q_{i}%
\backslash Q_{i-1}}\left(  1+\left\vert {\overline u_n}\left(  x\right)  -w_{n}\left(
x\right)  \right\vert \frac{1}{s_{n}}+\left\vert \nabla {\overline u_n}\left(  x\right)
\right\vert +\left\vert \nabla w_{n}\left(  x\right)  \right\vert \right)
dx +C\int_{Q\backslash Q_{i}}\left(  1+\left\vert \nabla w_{n}\left(
x\right)  \right\vert \right)  dx}$,
and we have used \eqref{FR}  in the last two terms of the above estimate.

Averaging over all layers $Q_{i}\backslash Q_{i-1}$ 
one obtains
\begin{align*}
&  \frac{1}{k_{n}}\sum_{i=1}^{k_{n}}\left(  \int_{Q}Qf\left(  v
_{n,i}^{t_{n,i}},\nabla u_{n}^{\left(  i\right)  }\right)  dx+
\int_{Q \cap J_{v_{n,i}^{t_{n,i}}}} g((v_{n,i}^{t_{n,i}})^+, (v_{n,i}^{t_{n_i}})^-, \nu_{v_{n_i}^{t_{n,i}}})d {\cal H}^{N-1}\right)
\\
&  \leq\int_{Q}Qf\left(  {\overline v_n},{\overline u_n}\right)  dx
+ \int_{Q \cap J_{v_n}}g({\overline v_n}^+, {\overline v_n}^-, \nu_{\overline v_n})d {\cal H}^{N-1}
+\frac{C}{k_{n}}\int_{Q}\left(  1+\left\vert
\nabla {\overline u_n}\right\vert +\left\vert \nabla {\overline v_n}\right\vert \right)
dx\\
&  +\frac{C}{k_{n}}\int_{Q}\left\vert {\overline u_n}-w_n\right\vert \frac{1}{s_{n}%
}dx+C\int_{Q\backslash Q_{0}}\left(  1+\left\vert \nabla w_n\right\vert
\right)  dx+C\left\vert Dv_{0}\right\vert \left(  Q\backslash Q_{0}\right)+\frac{C}{s_{n}k_{n}}\int_{ Q\backslash
Q_{0}} |{\overline v_n}-v_0|dx  + \frac{C}{k_n}
\\
&  \leq\int_{Q}Qf\left(  {\overline v_n},\nabla {\overline u_n}\right)  dx+
\int_{Q \cap J_{v_n}} g({\overline v_n}^+, {\overline v_n}^-, \nu_{\overline v_n})d{\cal H}^{N-1}
 +\frac{C}{k_{n}}\int_{Q}\left(  1+\left\vert
\nabla {\overline u_n}\right\vert +\left\vert \nabla {\overline v_n}\right\vert \right)
dx\\
&  +\frac{C}{\alpha_{n}}\left\Vert {\overline u_n}-w_n\right\Vert _{L^{1}}+C\int_{Q\backslash Q_{0}}\left(  1+\left\vert \nabla w_n\right\vert
\right)  dx+C\left\vert Dv_{0}\right\vert \left(  Q\backslash
Q_{0}\right) +\frac{C}{\alpha_{n}}\left\Vert {\overline v_n}-v_{0}\right\Vert
_{L^{1}\left(  Q\right)} + \frac{C}{k_n} .
\end{align*}
Since $\left\vert Q\backslash Q_{0}\right\vert =O\left(  \alpha_{n}\right)  $
and $\nabla w_n\left(  x\right)  =0$ if $\left\vert x_{N}\right\vert
>\frac{1}{N}$ we estimate
\[
\int_{Q\backslash Q_{0}}\left(  1+\left\vert \nabla w_n\right\vert \right)
dx\leq O\left(  \alpha_{n}\right)  +\mathcal{H}^{N-1}\left(  Q\backslash
Q_{0}\cap\left\{  x_{N}=0\right\}  \right)  \int_{-\frac{1}{n}}^{\frac{1}{n}%
}O\left(  n\right)  dx_{N}=O\left(  \alpha_{n}\right)  .
\]
The same argument exploited above in order to estimate $\int_{Q \setminus Q_0}dx$ applies to estimate $|Dv_0|(Q\setminus Q_0)$ since $v_0$ is a jump function across $x_N=0$, namely
$|Dv_0|(Q\setminus Q_0)= C \mathcal{H}^{N-1}\left(  Q\backslash
Q_{0}\cap\left\{  x_{N}=0\right\}  \right)$, recalling also that $Q_0 =\alpha_n Q$.

Setting $\varepsilon_{n}:=O\left(  \frac{1}{n}\right)  +C\sqrt{\left\Vert
{\overline u_n}-w_n\right\Vert _{L^{1}\left(  Q;\mathbb{R}^{d}\right)  }+\left\Vert
{\overline v_n}-v_{0}\right\Vert _{L^{1}\left(  Q\right)  }}+O\left(  \alpha
_{n}\right)  $ we have that $\varepsilon_{n}\rightarrow 0^{+}$ and
\begin{align*}
&  \frac{1}{k_{n}}\sum_{i=1}^{k_{n}}\left(  \int_Q Qf\left(  v_{n,i}^{t_{n,i}},\nabla u_n^{(i)}\right)  dx+
\int_{Q \cap J_{v_{n_i}^{t_{n,i}}}} g((v_{n,i}^{t_{n,i}})^+, (v_{n,i}^{t_{n,i}})^-, \nu_{v_{n,i}^{t_{n,i}}})d {\cal H}^{N-1}\right)
 \\
&  \leq\int_{Q}Qf \left(  {\overline v_n},\nabla {\overline u_n}\right)  dx+\int_{Q \cap J_{\overline v_n}}g({\overline v_n}^+, {\overline v_n}^-, \nu_{\overline v_n})d {\cal H}^{N-1}+\varepsilon_{n}
\end{align*}

and so there exists an index $i\left(  n\right)  \in\left\{  1,\dots
,k_{n}\right\}  $ for which
\begin{equation}\nonumber
\begin{array}{ll}
\displaystyle{\int_{Q}Qf\left(  v_{n,i\left(  n\right)  }^{t_{n,i\left(  n\right)  }%
},\nabla u_n^{i\left(  n\right)  }\right)  dx+\int_{Q \cap J_{v_{n_i}^{t_{n,i}}}} g((v_{n,i}^{t_{n,i}})^+, (v_{n,i}^{t_{n,i}})^-, \nu_{v_{n,i}^{t_{n,i}}})d {\cal H}^{N-1}}
\\
\\
\displaystyle{\leq \int_{Q}Qf \left(  {\overline v_n},\nabla {\overline u_n}\right)  dx+\int_{Q \cap J_{\overline v_n}}g({\overline v_n}^+. {\overline v_n}^-, \nu_{\overline v_n})d {\cal H}^{N-1}+\varepsilon_{n}.}
\end{array}
\end{equation}
It suffices to define $\xi_{n}:=u_{n}^{i\left(  n\right)} , \zeta_{n}:=v_{n,i\left(n\right)}^{t_{n,i\left(  n\right) }}$  to get
\begin{equation}\nonumber
\begin{array}{ll}
\displaystyle{\underset{n\rightarrow\infty}{\lim\sup}\left(  \int_{Q}Qf\left(
\zeta_{n},\nabla\xi_{n}\right)  dx+\int_{J_{\zeta_n}\cap Q}g(\zeta_n^+, \zeta_n^-, \nu_{\zeta_n})d {\cal H}^{N-1}\right)}
\\
\\
\displaystyle{\leq \underset{n\rightarrow\infty}{\lim\inf}\left(  \int_{Q}Qf\left( {\overline v_n},\nabla {\overline u_n}\right)  dx+\int_{J_{\overline v_n}\cap Q}g({\overline v_n}^+, {\overline v_n}^-, \nu_{\overline v_n})d {\cal H}^{N-1}
\right),}
\end{array}
\end{equation}
which concludes the proof.
\end{proof}

\begin{remark}\label{asinGlobalMethodBFLM}
\begin{itemize}
\item[i)] Observe that arguing as in the first step of Lemma \ref{Lemma4.1FM}, it results that for every $u \in BV(\Omega;\mathbb R^d)$ and $v \in SBV_0(\Omega;\mathbb R^m)\cap L^\infty(\Omega;\mathbb R^m)$
$$
\begin{array}{ll}
{\cal F}(v, u;A) =&\inf\left\{  \displaystyle{\liminf_{n
\to\infty} \left( \int_{A} f(v_{n}, \nabla u_{n})dx +\int_{J_{v_{n}}\cap A}
g({v_{n}}^{+}, {v_{n}}^{-}, \nu_{v_{n}}) d\mathcal{H}^{N-1}\right) :}\right. \\
\\
 &\{ v_{n}\} \subset SBV_{0}\left(A;\mathbb R^m \right) \cap L^\infty(A;\mathbb R^m),  
\left\{
u_{n}\right\}  \subset W^{1,1}\left(  A;\mathbb{R}^{d}\right) ,\\
\\
& (v_n,u_n) \to (v, u)\text{ in }L^1\left(
A;\mathbb{R}^{m+d}\right), \, \sup_n \|v_n\|_{\infty}< +\infty  \Big\}.
\end{array}
$$

\item[ii)] Similarly, if also $u\in BV(\Omega;\mathbb R^d)\cap L^\infty(\Omega;\mathbb R^d)$, then
 $$
\begin{array}{ll}
{\cal F}(v,u;A) =&\inf\left\{  \displaystyle{\liminf_{n
\to\infty} \left( \int_{A} f(v_{n}, \nabla u_{n})dx +\int_{J_{v_{n}}\cap A}
g({v_{n}}^{+}, {v_{n}}^{-}, \nu_{v_{n}}) d\mathcal{H}^{N-1}\right) :}\right. \\
\\
& \{ v_{n}\} \subset SBV_{0}\left(A;\mathbb R^m \right) \cap L^\infty(A;\mathbb R^m), \left\{
 u_{n}\right\}  \subset W^{1,1}\left(  A;\mathbb{R}^{d}\right)  \cap L^\infty(A;\mathbb R^d),
 \\
  \\
&(v_n,u_n) \to (v, u)\text{ in }L^1\left(
A;\mathbb{R}^{m+d}\right),  \sup_n \|(v_n, u_n)\|_{\infty}< +\infty  \Big\}.
\end{array}
$$
\item[iii)] Notice that an argument entirely similar to \cite[Lemmas 13 and 14]{BFLM} allows us to say that for every $(v,u)\in SBV_0(\Omega;\mathbb R^m)\times BV(\Omega;\mathbb R^d)$, it results
$$
\displaystyle{{\cal F}(v,u; A)=\lim_{j
\to\infty}{\cal F}(\phi_j(v,u);A)},
$$ where $\phi_j$ are the functions defined in \eqref{Lipschitztruncature}.
\end{itemize}
%{\bf it remains to understand (and to see if really needed) that we can fix also the boundary data in the definition of ${\cal F}$. %This we already know by Lemma \ref{Lemma4.1FM} when data are perimeters.}
\end{remark}

We conclude this section with a result that will be exploited in the sequel.

\begin{lemma}
\label{Lemma0} Let $X$ be a function space, for any $F:\mathbb R\times X
\rightarrow\left[  0,\infty\right]  $%
\[
\underset{\varepsilon\rightarrow 0^+}{\lim\sup}\inf_{u\in X}F\left(  \varepsilon,u\right)  \leq\inf_{u\in X }\underset{\varepsilon\rightarrow 0^+ }{\lim\sup}F\left(
\varepsilon,u\right)  .
\]

\end{lemma}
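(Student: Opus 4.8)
The plan is to exploit the elementary monotonicity of $\limsup$ together with the defining property of the infimum, with no appeal to any structure on $X$ or regularity of $F$. First I would fix an arbitrary $u\in X$. By the definition of the infimum over the second slot, $\inf_{w\in X}F(\varepsilon,w)\le F(\varepsilon,u)$ for every $\varepsilon>0$; equivalently, the function $\varepsilon\mapsto\inf_{w\in X}F(\varepsilon,w)$ is pointwise dominated on $(0,+\infty)$ by $\varepsilon\mapsto F(\varepsilon,u)$.

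Next I would pass to the $\limsup$ as $\varepsilon\to 0^{+}$ on both sides. Since $\limsup$ is monotone with respect to the pointwise order, this gives $\limsup_{\varepsilon\to 0^{+}}\inf_{w\in X}F(\varepsilon,w)\le\limsup_{\varepsilon\to 0^{+}}F(\varepsilon,u)$. The left-hand side is independent of $u$, so the inequality holds simultaneously for every $u\in X$; taking the infimum over $u\in X$ on the right-hand side yields exactly $\limsup_{\varepsilon\to 0^{+}}\inf_{u\in X}F(\varepsilon,u)\le\inf_{u\in X}\limsup_{\varepsilon\to 0^{+}}F(\varepsilon,u)$, which is the asserted inequality.

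There is essentially no obstacle here: the statement is the ``$\inf\limsup\ge\limsup\inf$'' half of a minimax-type inequality and holds in full generality, since all quantities are valued in $[0,+\infty]$ and no continuity, measurability, or compactness is required. The only point meriting a word of care is that, because the extended value $+\infty$ is permitted, the chain of inequalities should be read in $[0,+\infty]$, where monotonicity of $\limsup$ and the order properties of $\inf$ remain valid; with that convention the argument above is complete.
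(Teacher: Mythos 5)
Your proof is correct and follows essentially the same approach as the paper: fix an arbitrary test element, use the defining inequality of the infimum, apply the monotonicity of $\limsup$ under pointwise domination, and then take the infimum over the test element on the right-hand side. The only (cosmetic) difference is that you explicitly flag the extended-real convention, which the paper leaves implicit.
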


\begin{proof}
For any $\widetilde{u}\in X $%
\[
\inf_{u\in X  }F\left(  \varepsilon,u\right)  \leq
F\left(  \varepsilon,\widetilde{u}\right)  .
\]
Thus%
\[
\underset{\varepsilon\rightarrow 0^+}{\lim\sup}\inf_{u\in X  }F\left(  \varepsilon,u\right)  \leq\underset{\varepsilon
\rightarrow 0^+ }{\lim\sup}F\left(  \varepsilon,\widetilde{u}\right)
\]
for every $\widetilde{u}\in X  .$ Applying the
infimum in the previous inequality one obtains%
\[
\inf_{\widetilde{u}\in X}\underset{\varepsilon
\rightarrow0^+}{\lim\sup}\inf_{u\in X}F\left(
\varepsilon,u\right)  \leq\inf_{\widetilde{u}\in X  }\underset{\varepsilon\rightarrow 0^+ }{\lim\sup}F\left(  \varepsilon
,\widetilde{u}\right)  .
\]
Hence%
\[
\underset{\varepsilon\rightarrow 0^+ }{\lim\sup}\inf_{u\in X }F\left(  \varepsilon,u\right)  \leq\inf_{u\in X}\underset{\varepsilon\rightarrow 0^+ }{\lim\sup}F\left(
\varepsilon,u\right)  .
\] \end{proof}

\section{Lower bound}\label{lb}

This section  is devoted to the proof of the lower bound inequality for Theorem \ref{mainthmgen}. Recall that ${\cal F}$ and $\overline{F_0}$  are the functionals introduced in \eqref{calFG} and \eqref{representationFG}. 
%The next result is devoted to prove that ${\overline{F_0}}\leq {\cal F}$.

%Let ${\overline F}: L^1(\Omega; \mathbb R^m) \times L^1(\Omega;%\mathbb R^d) \times {\cal A}(\Omega)\to [0, + \infty)$ be the functional %defined as
%\begin{equation}\label{Fbar}
%\overline{F}(v,u;A):= \left\{
%\begin{array}{ll}
%F_0(v,u; A) & \hbox{ if }(v,u) \in SBV_0(\Omega;\mathbb R^m)\times %BV(\Omega;\mathbb R^d),
%\\
%+ \infty &\hbox{ otherwise.}
%\end{array}
%\right.
%\end{equation}
%for every $A \in {\cal A}(\Omega)$, where $F_0$ is the functional defined %in 
%\eqref{representationFG}.

\begin{theorem}
\label{lsctheorem} Let $\Omega\subset\mathbb{R}^{N}$ be a bounded open set,
let $f:\mathbb R^m \times \mathbb R^d\rightarrow\lbrack0,+\infty)$ satisfy $(F_1)-(F_4)$ and let $g:\mathbb R^m \times \mathbb R^m \times S^{N-1}\to [0, +\infty)$ satisfy $(G_1)-(G_3)$. 
Then for every 
%$A\in\mathcal{A}\left(
%\Omega\right)$ and every 
$\left(  v,u\right)  \in SBV_0 ( \Omega;\mathbb R^m)  \times BV\left(
\Omega;\mathbb{R}^{d}\right)$, and for every sequence $\left\{  (v_{n}, u_n)\right\}  \subset
SBV_0( \Omega; \mathbb R^m) \times W^{1,1}\left(\Omega;\mathbb{R}^{d}\right)  $ such that $(v_n, u_n) \to (v,u)$ in $L^1(\Omega;\mathbb R^m)\times L^1(\Omega; \mathbb R^d)$,  
\begin{equation}
\overline{F_{0}}\left(  v,u;\Omega\right)  \leq\underset{n\rightarrow\infty}{\lim\inf
}F\left(  v_{n},u_{n};\Omega\right)  , \label{lsc0}%
\end{equation}
where ${\overline F_0}$ is given by \eqref{representationFG}  .
\end{theorem}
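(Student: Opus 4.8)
The plan is to establish the lower bound separately for the three parts of $\overline{F_0}$ — the absolutely continuous (bulk) part, the Cantor part, and the jump part — by using the blow-up method of Fonseca and Müller together with a measure-theoretic localization. First I would fix a sequence $\{(v_n,u_n)\}$ with $(v_n,u_n)\to(v,u)$ in $L^1$ and, after passing to a subsequence, assume the $\liminf$ is a limit and is finite; by $(F_2)$ and $(G_2)$ the energies $F(v_n,u_n;\Omega)$ are then bounded, so $\sup_n|Du_n|(\Omega)<+\infty$ and $\sup_n\big(\mathcal H^{N-1}(J_{v_n})+|Dv_n|(\Omega)\big)<+\infty$. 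Thus $u\in BV(\Omega;\mathbb R^d)$ and, by the Ambrosio compactness theorem (Theorem~\ref{Theorem 2.1}) applied with suitable $\Phi,\Theta$, $v\in SBV(\Omega;\mathbb R^m)$; since $\nabla v_n=0$ and $\mathcal H^{N-1}(J_{v_n})$ is bounded, in fact $v\in SBV_0(\Omega;\mathbb R^m)$. Define the nonnegative Radon measures
\[
\mu_n:=f(v_n,\nabla u_n)\,\mathcal L^N\lfloor\Omega+g(v_n^+,v_n^-,\nu_{v_n})\,\mathcal H^{N-1}\lfloor J_{v_n},
\]
and let $\mu$ be a weak-$*$ limit of a (further) subsequence. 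It suffices to show that
\[
\frac{d\mu}{d\mathcal L^N}(x_0)\ge Qf(v(x_0),\nabla u(x_0))\quad\text{for }\mathcal L^N\text{-a.e. }x_0,
\]
\[
\frac{d\mu}{d|D^cu|}(x_0)\ge Qf^\infty\!\Big(v(x_0),\tfrac{dD^cu}{d|D^cu|}(x_0)\Big)\quad\text{for }|D^cu|\text{-a.e. }x_0,
\]
\[
\frac{d\mu}{d\mathcal H^{N-1}\lfloor J_{(v,u)}}(x_0)\ge K_3\big(v^+(x_0),v^-(x_0),u^+(x_0),u^-(x_0),\nu(x_0)\big)\quad\text{for }\mathcal H^{N-1}\text{-a.e. }x_0\in J_{(v,u)},
\]
since these three mutually singular measures are dominated by $\mu$ and their sum is $\overline{F_0}(v,u;\Omega)$; here I use Remark~\ref{vmeas} to make sense of $v$ as $|D^cu|$-measurable.

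For the \emph{bulk estimate} I would blow up at a point $x_0$ that is simultaneously a Lebesgue point of $v$ and of $\nabla u$, a point of approximate differentiability of $u$ (Proposition~\ref{thm2.3BBBF}(i)), and a point where the Radon–Nikodým derivative $\frac{d\mu}{d\mathcal L^N}(x_0)$ exists (Theorem~\ref{thm2.6BBBF}); rescaling on cubes $Q(x_0,\e)$ and diagonalizing, one produces a sequence of competitors on the unit cube converging in $L^1$ to the affine map $y\mapsto\nabla u(x_0)y$ with $v$-component converging to the constant $v(x_0)$, whose surface energy becomes negligible (the jump sets shrink in $\mathcal H^{N-1}$ after rescaling because $\mathcal H^{N-1}(J_{v_n})$ is bounded and we rescale by $\e^{1-N}$ on a set where $|Dv_n|$ carries no mass). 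The continuity and $(F_3)$-Lipschitz dependence of $f$ on its first argument (to freeze $v_n\approx v(x_0)$) together with the definition \eqref{Qfbar} of $Qf$ and a standard lower semicontinuity/De Giorgi-type slicing argument then yield the bound $Qf(v(x_0),\nabla u(x_0))$. The \emph{Cantor estimate} is handled by the same blow-up but at a point $x_0$ where $\frac{dDu}{d|D^cu|}(x_0)$ is a rank-one matrix of the form $\lambda\otimes\nu$ and $\frac{dD^cu}{d|D^cu|}(x_0)$ exists; here the relevant rescaling uses the (non-cubic) anisotropic blow-up adapted to $|D^cu|$, and one invokes the positive one-homogeneity of $Qf^\infty$ (Remark~\ref{propfinfty}(i)), its continuity and Lipschitz dependence in both variables (Remark~\ref{propfinfty}(iv)), and condition $(F_4)$ (equivalently its reformulation in Remark~\ref{propfinfty}(v)) to pass from $\frac{f(v_n,t z)}{t}$ to $Qf^\infty(v(x_0),z)$; that $v$ is $|D^cu|$-measurable and $|D^cu|$-a.e. approximately continuous (again Remark~\ref{vmeas} and Theorem~\ref{thm2.8FM2}) is what lets us freeze its first argument. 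The recession-function replacement is precisely where $(F_4)$ is indispensable, as the error term $C|z|^{1-\alpha}/t^\alpha$ must vanish in the blow-up limit.

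For the \emph{jump estimate}, which I expect to be the main obstacle, I would blow up at $x_0\in J_{(v,u)}$ chosen so that $\frac{d\mu}{d\mathcal H^{N-1}\lfloor J_{(v,u)}}(x_0)$ exists, $x_0$ is a jump point in the sense of Proposition~\ref{thm2.3BBBF}(ii) for both $v$ and $u$ with common normal $\nu(x_0)$, and Lemma~\ref{lemma2.5BBBF} applies; rescaling on cubes $Q_{\nu(x_0)}(x_0,\e)$ with the $\e^{1-N}$ normalization and diagonalizing produces sequences $(\tilde v_n,\tilde u_n)$ on $Q_\nu$ converging in $L^1$ to the pair $(v_0,u_0)$ of \eqref{v0u0} with $a=v^+(x_0)$, $b=v^-(x_0)$, $c=u^+(x_0)$, $d=u^-(x_0)$, and with $\liminf$ of the rescaled energies bounded by $\frac{d\mu}{d\mathcal H^{N-1}}(x_0)$. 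At this stage one cannot yet compare with $K_3$ because the competitors do not satisfy the affine boundary conditions of $\mathcal A_3$, and their $u$-component need not be bounded; the remedy is exactly Lemma~\ref{Lemma4.1FM} (together with Proposition~\ref{prop3.5BBBF} and Remark~\ref{applicationofprop3.4}), which modifies $(\tilde v_n,\tilde u_n)$ into admissible competitors in $\mathcal A_3(v^+(x_0),v^-(x_0),u^+(x_0),u^-(x_0),\nu(x_0))$ without increasing the limit energy — but applying that lemma first requires reducing to $Qf$ in place of $f$ via Proposition~\ref{propqcx}, and controlling the error terms coming from the tangential-periodicity and boundary-matching construction (the $\frac{1}{k_n}$ and $\frac{1}{\alpha_n}\|\cdot\|_{L^1}$ terms in the proof of Lemma~\ref{Lemma4.1FM}), which is the delicate bookkeeping. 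Once the competitors lie in $\mathcal A_3$, the very definition \eqref{K3} of $K_3$ gives the desired inequality, and a final covering/summation argument (Morse covering, as in Proposition~\ref{prop2.4FM1}) assembles the three local bounds into \eqref{lsc0}.
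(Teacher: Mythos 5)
Your proposal is correct and follows essentially the same route as the paper: passing to the weak-$*$ limit measure $\mu$, decomposing it into bulk, Cantor, jump (and residual singular) parts, and establishing the three Radon--Nikod\'ym lower bounds by blow-up — the bulk and Cantor estimates via quasiconvexification and the Fonseca--M\"uller relaxation formulas (the paper invokes \cite[Theorem 2.19]{FM2} directly, which you rederive via slicing), and the jump estimate via $(F_4)$/Remark~\ref{propfinfty}(v), diagonalization, Proposition~\ref{propqcx}, and Lemma~\ref{Lemma4.1FM} to land competitors in $\mathcal A_3$. The compactness digression establishing $v\in SBV_0$ is superfluous since the theorem already assumes $(v,u)\in SBV_0\times BV$, but this does no harm.
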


\begin{proof}[Proof] 
Let $(v,u) \in SBV_0(\Omega;\mathbb R^m)\times BV(\Omega;\mathbb R^d)$.
Without loss of generality, we may assume that  for every $\{(v_n, u_n)\} \subset SBV_0(\Omega;\mathbb R^m)\times BV(\Omega;\mathbb R^d)$ converging to $(v,u)$ in $L^1(\Omega; \mathbb R^m)\times L^1(\Omega;\mathbb R^d)$,
\begin{equation}\nonumber
\begin{array}{ll}
\displaystyle{\underset{n\rightarrow\infty}{\lim\inf}\left(\int_{\Omega} f\left( 
v_n,\nabla u_n\right)dx  +\int_{J_{v_n}\cap \Omega}g(v^+_n, v^-_n, \nu_{v_n})d {\cal H}^{N-1}\right)}\\
\\
\displaystyle{=\lim_{n\rightarrow\infty}\left(\int_{\Omega} f\left(  v_n,\nabla
u_{n}\right)dx  +   \int_{J_{v_n}\cap \Omega}g(v^+_n, v^-_n, \nu_{v_n})d {\cal H}^{N-1} \right) <+\infty.}
\end{array}
\end{equation}

For every Borel set $B \subset \Omega$ define
$$
\displaystyle{\mu_n(B):=\int_B f\left(  v_n,\nabla
u_{n}\right) dx +   \int_{J_{v_n}\cap B}g(v^+_n, v^-_n, \nu_{v_n})d {\cal H}^{N-1} .}
$$ 
Since $\{\mu_n\}$ is a  sequence of nonnegative Radon measures, uniformly bounded in the space of measures, we can extract a subsequence, still denoted by $\{\mu_n\}$,   
 weakly $\ast$ converging in the
sense of measures to some Radon measure $\mu.$ Using Radon-Nikod\'ym theorem we
can decompose $\mu$ as a sum of four mutually singular nonnegative measures,
namely%
\begin{equation}\label{mudecomposition}
\mu=\mu_{a}\mathcal{L}^{N}+\mu_{c}\left\vert D^{c}u\right\vert +\mu_{j}\mathcal{H}^{N-1}\lfloor J_{(v,u)}+\mu_{s},
\end{equation}
where we have been considering $(v,u)$ as a unique field in $BV(\Omega; \mathbb R^{m + d})$ and we have been exploiting the fact that $D^c(v,u)= (\underline{0}, D^c u)$ (cf. Remark \ref{vmeas}). 
By Besicovitch derivation theorem%
\begin{align}
\mu_{a}\left(  x_{0}\right)   &  =\lim_{\varepsilon\rightarrow0^{+}}\frac
{\mu\left(  B\left(  x_{0},\varepsilon\right)  \right)  }{\mathcal{L}%
^{N}\left(  B\left(  x_{0},\varepsilon\right)  \right)  }<+\infty
,~\text{for~\ }\mathcal{L}^{N}-\text{a.e.}~x_{0}\in\Omega,\nonumber\\
\mu_{j}\left(  x_{0}\right)   &  =\lim_{\varepsilon\rightarrow0^{+}}\frac
{\mu\left(  Q_{\nu}\left(  x_{0},\varepsilon\right)  \right)  }{\mathcal{H}%
^{N-1}\left(  Q_{\nu}\left(  x_{0},\varepsilon\right)  \cap J_{(v, u)}\right)
}<+\infty,~\text{for }\mathcal{H}^{N-1}-\text{a.e. }x_{0}\in J_{(v, u)}\cap
\Omega,\label{BDT}\\
\mu_{c}\left(  x_{0}\right)   &  =\lim_{\varepsilon\rightarrow0^{+}}\frac
{\mu\left(  Q\left(  x_{0},\varepsilon\right)  \right)  }{\left\vert
Du\right\vert \left(  Q\left(  x_{0},\varepsilon\right)  \right)  }%
<+\infty,~\text{for }\left\vert D^{c}u\right\vert -\text{a.e. }x_{0}\in
\Omega.\nonumber
\end{align}

We claim that%
\begin{equation}
\mu_{a}\left(  x_{0}\right)  \geq Qf\left(  v\left(  x_{0}\right)  ,\nabla
u\left(  x_{0}\right)  \right)  ,~\text{for \ }\mathcal{L}^{N}-\text{a.e}%
.~x_{0}\in\Omega,\label{lboundbulk}%
\end{equation}%
\begin{equation}
\mu_{j}\left(  x_{0}\right)  \geq K_3\left(  v^+(x_0),v^-(x_0),u^{+}\left(  x_{0}\right)
,u^{-}\left(  x_{0}\right)  ,\nu_{(v,u)}\right)  ,~\text{for }\mathcal{H}%
^{N-1}-\text{a.e}.~x_{0}\in J_{(v,u)}\cap\Omega,\label{lboundjump}%
\end{equation}%
\begin{equation}
\mu_{c}\left(  x_{0}\right)  \geq\left(  Qf\right)  ^{\infty}\left(
v\left(  x_{0}\right)  ,\frac{dD^{c}u}{d\left\vert D^{c}u\right\vert
}\left(  x_{0}\right)  \right)  \text{ for }\left\vert D^{c}u\right\vert
-\text{a.e. }x_{0}\in\Omega, \label{lboundcantor}%
\end{equation}

\noindent where $Qf$ is the density introduced in \eqref{Qfbar}, $Qf^\infty$ is  its recession function as in \eqref{finfty} and $K_3$ is given by \eqref{K3}.
If $\left(  \ref{lboundbulk}\right)  -\left(  \ref{lboundcantor}\right)  $
hold then $\left(  \ref{lsc0}\right)  $ follows immediately. Indeed, since
$\mu_{n}\overset{\ast}{\rightharpoonup}\mu$ in the sense of measures

\begin{align*}
&  \underset{n\rightarrow\infty}{\lim\inf}\left(\int_{\Omega} f\left(  v_n,\nabla
u_{n}\right)dx  +   \int_{J_{v_n}\cap \Omega}g(v^+_n, v^-_n, \nu_{v_n})d {\cal H}^{N-1} \right) \\
&  \geq\underset{n\rightarrow\infty}{\lim\inf}\mu_{n}\left(  \Omega\right)
\geq\mu\left(  \Omega\right)  \geq\int_{\Omega}\mu_{a}~dx+\int_{J_{(v,u)}}\mu
_{j}~d\mathcal{H}^{N-1}+\int_{\Omega}\mu_{c}d|D^{c}u|\\
&  \geq\int_{\Omega}Qf\left(  v\left(  x\right)  ,\nabla u\left(  x\right)
\right)  dx+\int_{J_{u}\cap\Omega}K_3\left(  v^+(x),v^-(x),u^{+}(x)
,u^{-}( x)  ,\nu_{(v,u)}\right)  d\mathcal{H}^{N-1}\\
&  +\int_{\Omega}\left(  Qf\right)  ^{\infty}\left(  v\left(  x\right)
,\frac{dD^{c}u}{d\left\vert D^{c}u\right\vert }\left(  x\right)  \right)
d\left\vert D^{c}u\right\vert
\end{align*}
where we have used the fact that $\mu_s$ is nonnegative.

We prove \eqref{lboundbulk}$-$\eqref{lboundcantor} using
the blow-up method introduced in \cite{FM1}.

\noindent\textbf{Step 1.} Let $x_{0}\in\Omega$ be a Lebesgue point for $\nabla u$ and $v$, such that
$x_{0}\notin J_{(v,u)},$ \eqref{approximate differentiability} applied to $u$,
and $\left(  \ref{BDT}\right)  _{1}$ hold.

We observe that 
$$
\begin{array}{ll}
\displaystyle{ \underset{n\rightarrow\infty}{\lim\inf}\left(\int_{\Omega} f\left(  v_n,\nabla
u_{n}\right)dx  +   \int_{J_{v_n}\cap \Omega}g(v^+_n, v^-_n, \nu_{v_n})d {\cal H}^{N-1} \right) }\\
\displaystyle{\geq \underset{n\rightarrow\infty}{\lim\inf}\int_{\Omega} f\left(  v_n,\nabla u_{n}\right)dx \geq \underset{n\rightarrow\infty}{\lim\inf}\int_{\Omega} Qf\left(  v_n,\nabla u_{n}\right)dx.}
\end{array}
$$

Note that by Proposition \ref{continuityQfbar} $Qf$ satisfies $(F_1)-(F_3)$. By Proposition \ref{prop2.4FM1} we may assume that $\left\{ (v_n, u_{n})\right\}  \subset
C_{0}^{\infty}\left(  \mathbb{R}^{N};\mathbb R^m \right) \times C_{0}^{\infty}\left(  \mathbb{R}^{N};\mathbb{R}^{d}\right) $ and
applying \cite[formula (2.10) in Theorem 2.19]{FM2}, to the functional $G: (v,u)\in W^{1,1}(\Omega;\mathbb R^{m+d}) \to \int_{\Omega}Q f(v,\nabla u)dx$ we obtain \eqref{lboundbulk}.

\noindent{\bf Step 2. }Now we prove $\left(  \ref{lboundjump}\right)  .$

Remind that $J_{( v,u)  }=J_v\cup J_{u}$ and $\nu_{(v,u)}= \nu_v$ for every $(v,u) \in SBV_0(\Omega;\mathbb R^m) \times W^{1,1}(\Omega;\mathbb R^d)$.
% and $D\chi=\nu
%d\mathcal{H}^{N-1}\lfloor J_{\chi}.$ 
By Lemma \ref{lemma2.5BBBF}, Proposition \ref{thm2.3BBBF} ii) and Theorem
\ref{thm2.6BBBF} %in \cite{BBBF} {\bf Shall we recall these results here in the % preliminaries...maybe it is better!} 
we may fix $x_{0}\in J_{( v,u)}\cap\Omega$
such that%
\begin{equation}\label{4.13}
\begin{array}{ll}
\displaystyle{\lim_{\varepsilon\rightarrow0^{+}}\frac{1}{\varepsilon^{N-1}}\int_{J_{(v,u)}  \cap Q_{\nu}\left(  x_{0},\varepsilon\right)  }\left( \left\vert v^{+}\left(  x\right)  -v^{-}\left(  x_{0}\right)
\right\vert +
\left\vert u^{+}\left(  x\right)  -u^{-}\left(  x_{0}\right)  \right\vert
 \right)  d\mathcal{H}^{N-1}}\\
\\
 \displaystyle{ =\left\vert v^{+}\left(  x_{0}\right)  -v^{-}\left(  x_{0}\right)
\right\vert +\left\vert u^{+}\left(  x_{0}\right)  -u^{-}\left(
x_{0}\right)  \right\vert,}
\end{array}
\end{equation}

\noindent 
\begin{equation}\label{4.14}
\begin{array}{ll}
\displaystyle{\lim_{\varepsilon\rightarrow0^{+}}\frac{1}{\varepsilon^{N}}\int_{\left\{
x\in Q_{\nu}\left(  x_{0},\varepsilon\right)  :\left(  x-x_{0}\right)  \cdot\nu\left(
x\right)  >0\right\}  }\left\vert v\left(  x\right)  -v^{+}\left(  x_0\right)
\right\vert ^{\frac{N}{N-1}}dx }\\
\\
\displaystyle{+\lim_{\varepsilon\rightarrow0^{+}}\frac{1}{\varepsilon^{N}}\int_{\left\{
x\in Q_{\nu}\left(  x_{0},\varepsilon\right)  :\left(  x-x_{0}\right)  \cdot\nu\left(
x\right)  >0\right\}  }\left\vert u\left(  x\right)  -u^{+}\left(
x_0\right)  \right\vert ^{\frac{N}{N-1}}dx=    0},
\end{array}
\end{equation}

\noindent
\begin{equation}\label{4.15}
\begin{array}{ll}
\displaystyle{\lim_{\varepsilon\rightarrow0^{+}}\frac{1}{\varepsilon^N}\int_{\left\{
x\in Q_{\nu}\left(  x_{0},\varepsilon\right)  :\left(  x-x_{0}\right)  \cdot\nu\left(
x\right)  <0\right\}  }\left\vert v\left(  x\right)  -v^{-}\left(  x_0\right)
\right\vert ^{\frac{N}{N-1}}dx}\\
\\
\displaystyle{+\lim_{\varepsilon\rightarrow0^{+}}\frac{1}{\varepsilon^N}\int_{\left\{
x\in Q_{\nu}\left(  x_{0},\varepsilon\right)  :\left(  x-x_{0}\right)  \cdot\nu\left(
x\right)  <0\right\}  }\left\vert u\left(  x\right)  -u^{-}\left(
x_0\right)  \right\vert ^{\frac{N}{N-1}}dx=0,}
\end{array}
\end{equation}

\noindent
\begin{equation}
\displaystyle{\mu_{j}\left(  x_{0}\right)  =\lim_{\varepsilon\rightarrow0^{+}}\frac
{\mu(  x_0+\varepsilon Q_{\nu( x_0)})}{\mathcal{H}^{N-1}\lfloor J_{( v,u)  }(  x_0+\varepsilon Q_{\nu( x_0)})  }\text{ \ exists and it is
finite.}} \label{4.16}%
\end{equation}

For simplicity of notation we write $Q:=Q_{\nu\left(  x_{0}\right)  }.$ Then
by $\left(  \ref{4.16}\right)  $,
\begin{equation}\label{4.17}
\mu_{j}(  x_0)  =\lim_{\varepsilon
\rightarrow 0^+}\frac{1}{\varepsilon^{N-1}}\int_{x_0+\varepsilon Q}%
d\mu\left(  x\right)  .
\end{equation}
Without loss of generality, we may choose $\varepsilon>0$ such that
$\mu\left(  \partial\left(  x_{0}+\varepsilon Q\right)  \right)  =0.$ Since $Qf \leq f$, we have%
\begin{align*}
\mu_{j}\left(  x_{0}\right)   &  \geq\lim_{\varepsilon\rightarrow0^{+}}%
\lim_{n\rightarrow\infty}\frac{1}{\varepsilon^{N-1}}\left(  \int%
_{x_{0}+\varepsilon Q}Qf\left(  v_n\left(  x\right)  ,\nabla u_{n}\left(
x\right)  \right)  dx+\int_{J_{v_n}}g(v_n^+, v_n^-, \nu_{v_n}) d {\cal H}^{N-1} \right)  \\
&  =\lim_{\varepsilon\rightarrow0^{+}}\lim_{n\rightarrow\infty}\varepsilon
\int_{Q}Qf\left(  v_n\left(  x_{0}+\varepsilon y\right)  ,\nabla
u_{n}\left(  x_{0}+\varepsilon y\right)  \right)  dy\\
&  +\int_{Q\cap J\left( v_n, u_n\right)  -\frac{x_0}{\varepsilon}} 
g\left( v_n^+ (  x_0+\varepsilon y), v_n^-(x_0+ \varepsilon y) ,
\nu_{(v_n, u_n)} (x_0+\varepsilon y)\right) d\mathcal{H}^{N-1}\left(  y\right)  .
\end{align*}
Define
\begin{equation}\label{vne}
\begin{array}{cc}
v_{n,\varepsilon}\left(  y\right)  :=v_{n}\left(  x_{0}+\varepsilon
y\right) , \;
u_{n,\varepsilon}\left(  y\right)   :=u_{n}\left(  x_{0}+\varepsilon
y\right),\;
\nu_{n,\varepsilon}\left(  y\right)  :=\nu_{\left(  v_n,u_n\right)
}\left(  x_{0}+\varepsilon y\right)  ,
\end{array}
\end{equation}
and%
\begin{equation}\label{u0v0}
\begin{array}{cc}
v_{0}\left(  y\right)  :=\left\{
\begin{array}
[c]{ccc}
v^{+}(x_0)  &  & \text{if }y\cdot\nu\left(  x_{0}\right)  >0,\\
v^{-}(x_0)    &  & \text{   if }y\cdot\nu\left(  x_{0}\right)  <0, \;\;
\end{array}
\right.
u_{0}\left(  y\right)  :=\left\{
\begin{array}
[c]{ccc}%
u^{+}\left(  x_{0}\right)   &  & \text{if }y\cdot\nu\left(  x_{0}\right)
>0,\\
u^{-}\left(  x_{0}\right)   &  & \text{if }y\cdot\nu\left(  x_{0}\right)
<0.
\end{array}
\right.
\end{array}
\end{equation}
Since $(v_n,u_n)\rightarrow (v,u)$ in $L^{1}\left(  \Omega;\mathbb{R}^{m+d}\right)$, by \eqref{4.14} and \eqref{4.15}
one obtains%
\begin{equation}\label{blabla}
\begin{array}{ll}
&  \displaystyle{\lim_{\varepsilon\rightarrow0^{+}}\lim_{n\rightarrow\infty}\int%
_{Q}\left\vert v_{n,\varepsilon}\left(  y\right)  -v_{0}\left(  y\right)
\right\vert dy=\lim_{\varepsilon\rightarrow0^{+}}\frac{1}{\varepsilon^{N}%
}\left(  \int_{\left\{  x\in x_{0}+\varepsilon\partial Q:\left(
x-x_{0}\right)  \cdot\nu\left(  x_{0}\right)  >0\right\}  }\left\vert v\left(
x\right)  -v^{+}\left(  x_{0}\right)  \right\vert dx\right.}  \\
& \displaystyle{ \left.  +\int_{\left\{  x\in x_{0}+\varepsilon\partial Q:\left(
x-x_{0}\right)  \cdot\nu\left(  x_{0}\right)  <0\right\}  }\left\vert v\left(
x\right)  -v^{-}\left(  x_{0}\right)  \right\vert dx\right)  =0}
\end{array}
\end{equation}
and%
\begin{equation}\label{blabla2}
\begin{array}{ll}
&  \displaystyle{\lim_{\varepsilon\rightarrow0^{+}}\lim_{n\rightarrow\infty}\int%
_{Q}\left\vert u_{n,\varepsilon}\left(  y\right)  -u_0\left(
y\right)  \right\vert dy=\lim_{\varepsilon\rightarrow0^{+}}\frac
{1}{\varepsilon^{N}}\left(  \int_{\left\{  x\in x_{0}+\varepsilon\partial
Q:\left(  x-x_{0}\right)  \cdot\nu\left(  x_{0}\right)  >0\right\}
}\left\vert u\left(  x\right)  -u^{+}\left(  x_{0}\right)  \right\vert
dx\right.}  \\
&  \displaystyle{+\left.  \int_{\left\{  x\in x_{0}+\varepsilon\partial Q:\left(
x-x_{0}\right)  \cdot\nu\left(  x_{0}\right)  <0\right\}  }\left\vert
u\left(  x\right)  -u^{-}\left(  x_{0}\right)  \right\vert dx\right)
=0.}
\end{array}
\end{equation}
Thus%
\begin{align*}
\mu_{j}\left(  x_{0}\right)   &  \geq\lim_{\varepsilon\rightarrow0^{+}}%
\lim_{n\rightarrow\infty}\left(\int_{Q}Qf^{\infty}\left(  v_{n,\varepsilon
}\left(  y\right)  ,\nabla u_{n,\varepsilon}\left(  y\right)  \right)
dy+\int_{Q\cap J\left(  v_{n,\e} ,u_{n_\e}\right)  } g(v_{n,\e}^+, v_{n,\e}^-, \nu_{v_{n,\e}}) d {\cal H}^{N-1}(y)\right.\\
&  \left.+\int_{Q}\left(\varepsilon Qf\left(  v_{n,\varepsilon}\left(  y\right)
,\frac{1}{\varepsilon}\nabla u_{n,\varepsilon}\left(  y\right)  \right)
-Qf^{\infty}\left(  v_{n,\varepsilon},\nabla u_{n,\varepsilon}\right) \right) dy \right).
\end{align*}

Exploiting $(v)$ in Remark \ref{propfinfty} we can argue as in the estimates \cite[(3.3)-(3.5)]{FM2}, thus obtaining
\[
\mu_{j}\left(  x_{0}\right)  \geq\underset{\varepsilon\rightarrow0^{+}%
}{\lim\inf}~\underset{n\rightarrow\infty}{\lim\inf}\left(  \int_{Q}Qf^{\infty
}\left( v_{n,\varepsilon}\left(  y\right)  ,\nabla u_{n,\varepsilon
}\left(  y\right)  \right)  dy+\int_{Q\cap J\left(  v_{n,\varepsilon}%
,u_{n,\varepsilon}\right)  }g(v_{n,\e}^+, v_{n,\e}^-, \nu_{v_{n,\e}}) d {\cal H}^{N-1}\left(  y\right)  \right)  .
\]
Since $(v_{n,\varepsilon}, u_{n,\e})\rightarrow (v_0, u_0)$ in $L^{1}\left(  Q;\mathbb{R}%
^{m+d}\right)  $ as $n\rightarrow\infty$ and $\varepsilon\rightarrow
0^{+},$ by a standard diagonalization argument, as in \cite[Theorem 4.1 Steps 2 and 3]{BBBF}, we obtain a sequence $(\bar{v}_k,\bar{u}_k)$ converging to $(v_0, u_0)$ in $L^1(Q;\mathbb R^{m+d})$ as $k \to  \infty$ such that
\[
\mu_{j}\left(  x_{0}\right)  \geq\lim_{k\rightarrow\infty}\left(  \int%
_{Q}Qf^{\infty}\left(  \bar{v}_{k}\left(  y\right)  ,\nabla \bar{u}_{k}\left(
y\right)  \right)  dy+\int_{Q\cap J_{\left(  v_k,w_k \right)  }%
} g(\bar{v}_k^+, \bar{v}_k^-, \nu_{\bar{v}_k}) d {\cal H}^{N-1}\left(  y\right)  \right).
\]

Applying Lemma \ref{Lemma4.1FM} with $Qf$ replaced by $Qf^\infty$ and using $(v)$ in Remark \ref{propfinfty} we may find $\left\{  \left(  \zeta_{k}%
,\xi_{k}\right)  \right\}  \in\mathcal{A}_3\left(  v^+(x_0),v^-(x_0),u^{+}\left(
x_{0}\right)  ,u^{-}\left(  x_{0}\right)  ,\nu\left(  x_{0}\right)  \right)  $
such that

\begin{equation}\nonumber
\begin{array}{ll}
\displaystyle{\mu_{j}(x_0)  \geq\lim_{k\rightarrow\infty}\left(  \int_Q Qf^{\infty}\left( \zeta_{k},\nabla\xi_{k}\right)  dx+ \int_{Q \cap J_{(\zeta_k, \xi_k)} }g(\zeta_k^+, \zeta_k^-, \nu_{\zeta_k}) d {\cal H}^{N-1} \right)} 
\\
\\
\displaystyle{\geq K_3\left(
v^+(x_0), v^-(x_0),u^{+}\left(  x_{0}\right)  ,u^{-}\left(  x_{0}\right)  ,\nu\left(
x_{0}\right)  \right) }.
\end{array}
\end{equation}

\noindent\textbf{Step 3.} Here we show \eqref{lboundcantor}.

\noindent Let $(v,u)  \in SBV_0(\Omega;\mathbb R^m)\times BV\left(  \Omega;\mathbb{R}^{d}\right)  $,  note, as already emphasized in Remark \ref{vmeas}, that
$|D^c(v,u)|= |D^c u|$. For $\left\vert D^{c}u\right\vert -$a.e.
$x_{0}\in\Omega$ we have%
\[
\lim_{\varepsilon\rightarrow0^{+}}\frac{\left\vert D(v,u)\right\vert \left(
Q\left(  x_{0},\varepsilon\right)  \right)  }{\left\vert D^{c}(v,u)\right\vert
\left(  Q\left(  x_{0},\varepsilon\right)  \right)  }= \lim_{\varepsilon\rightarrow0^{+}}\frac{\left\vert D(v,u)\right\vert \left(
Q\left(  x_{0},\varepsilon\right)  \right)  }{\left\vert D^{c}u\right\vert
\left(  Q\left(  x_{0},\varepsilon\right)  \right)  }=1.
\]
And so by Theorems 2.4. $iii)$  and 2.11 in \cite{FM2}, and by Theorem \ref{thm2.6BBBF} for
$\left\vert D^{c}u\right\vert -$a.e. $x_{0}\in\Omega$ the following hold%
\[
\mu_c\left(  x_{0}\right)  =\lim_{\varepsilon\rightarrow0^{+}}\frac{\mu\left(
Q\left(  x_{0},\varepsilon\right)  \right)  }{\left\vert Du\right\vert \left(
Q\left(  x_{0},\varepsilon\right)  \right)  },
\]%
\[
\lim_{\varepsilon\rightarrow0^{+}}\frac{1}{\varepsilon^{N}}\int_{Q\left(
x_{0},\varepsilon\right)  }\left(\left\vert u\left(  x\right)  -u\left(
x_{0}\right)  \right\vert +\left\vert v\left(  x\right)  -v\left(
x_{0}\right)  \right\vert\right) dx=0,
\]
for $\mathcal{H}^{N-1}-x_{0}\in\Omega\backslash J_{(v,u)},$%
\[
A\left(  x_{0}\right)  =\lim_{\varepsilon\rightarrow0^{+}}\frac{\left(
D(v,u)\right)  \left(  Q\left(  x_{0},\varepsilon\right)  \right)  }{\left\vert
D(v,u)\right\vert \left(  Q\left(  x_{0},\varepsilon\right)  \right)
},~~\left\Vert A\left(  x_{0}\right)  \right\Vert =1,~~A\left(  x_{0}\right)
=a\otimes\nu,
\]%
with $a \in \mathbb R^d$ and $\nu \in S^{N-1}$,
\[
\lim_{\varepsilon\rightarrow0^{+}}\frac{\left\vert D(v,u)\right\vert \left(
Q\left(  x_{0},\varepsilon\right)  \right)  }{\varepsilon^{N-1}}%
=\lim_{\varepsilon\rightarrow0^{+}}\frac{\left\vert Du\right\vert \left(
Q\left(  x_{0},\varepsilon\right)  \right)  }{\varepsilon^{N-1}}=0,
\]%
\[
\lim_{\varepsilon\rightarrow0^{+}}\frac{\left\vert D(v,u)\right\vert \left(
Q\left(  x_{0},\varepsilon\right)  \right)  }{\varepsilon^{N}}=\lim
_{\varepsilon\rightarrow0^{+}}\frac{\left\vert Du\right\vert \left(  Q\left(
x_{0},\varepsilon\right)  \right)  }{\varepsilon^{N}}=\infty.
\]

%Moreover, by Remark \ref{quiborel}%
%\begin{equation}
%\lim_{\varepsilon\rightarrow0^{+}}\frac{\left\vert D\chi\right\vert \left(
%Q\left(  x_{0},\varepsilon\right)  \right)  }{\left\vert D^{c}u\right\vert
%\left(  Q\left(  x_{0},\varepsilon\right)  \right)  }=0.\label{quiborel1}%
%\end{equation}

Arguing as in the end of Step 1, by Proposition \ref{prop2.4FM1} (ii), we may assume that $\left\{ (\widetilde{v}_n, \widetilde{u}_n)\right\}  \subset C_{0}^{\infty}(\mathbb R^N;\mathbb R^{m+d})$.
Applying \cite[formula (2.12) in Theorem 2.19]{FM2}, to the functional $G: (v,u)\in W^{1,1}(\Omega;\mathbb R^{m+d}) \to \int_{\Omega}Qf(v,\nabla u)dx$ we obtain for $\left\vert
D^{c}(v,u)\right\vert -$a.e. $x_{0}\in\Omega$
\begin{equation}\nonumber
\mu_c(  x_{0})    \geq(  Qf)  ^{\infty}\left(
v(  x_{0})  ,\frac{dD^{c} u  }{d\left\vert
D^{c}u \right\vert }  (x_0)  \right)
\end{equation}
and that concludes the proof.
\end{proof}

%{\bf In the applications, remind to mention that the proof of Lemma \ref{Lemma4.1FM} can be easily adapted to the case when %$SBV_0(\Omega;\mathbb R^m)$ is replaced by $BV(\Omega;T)$, with $T$ a finite set in $\mathbb R^m$, thus leading in Step 2 of %Theorem \ref{lsctheorem} to replace $K_3$ in \eqref{K3} by $.K_2$ in \eqref{K2}.}

\section{Upper bound}\label{ub}

This section is devoted to prove that ${\cal F}\leq {\overline F_0}$. 

\begin{theorem}
\label{thupperbound}Let $\Omega\subset\mathbb{R}^{N}$ be a bounded open set, let $f:\mathbb R^d \times \mathbb R^m\rightarrow\lbrack0,+\infty)$, be a function satisfying $(F_1)$ - $(F_4)$, and let $g: \mathbb R^m \times \mathbb R^m \times S^{N-1}\to [0,+\infty[$ be a function satisfying $(G_1)$ - $(G_3)$. 

Then for every $\left( v,u\right)  \in
SBV_0\left(  \Omega;\mathbb R^m \right)  \times BV\left(
\Omega;\mathbb{R}^{d}\right)  ,$ for every $A\in\mathcal{A}\left(
\Omega\right)$, there exist sequences $\left\{  v_n\right\}  \subset
SBV_0\left(  \Omega;\mathbb R^m\right) ,\left\{  u_{n}\right\}
\subset W^{1,1}\left(  \Omega;\mathbb{R}^{d}\right)  $ such that $v_n \to v$ in $L^1\left(  \Omega;\mathbb R^m\right)$,  $u_{n}\rightarrow u$ 
in $L^{1}\left(  \Omega;\mathbb{R}^{d}\right)  $, and %
\begin{equation}\nonumber
\underset{n\rightarrow\infty}{\lim\inf}F\left(  v_n,u_n;A\right)  \leq
{\overline F_0}\left(  v,u;A\right)  . 
\end{equation}

\end{theorem}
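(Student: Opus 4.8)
The plan is to establish the upper bound $\mathcal{F}(v,u;A)\le\overline{F_0}(v,u;A)$ by a localization-and-blow-up argument, reducing the problem to three model cases—the absolutely continuous part, the Cantor part, and the jump part—each matching one of the three terms in the definition \eqref{representationFG} of $\overline{F_0}$. Throughout I would work with the relaxed functional $\mathcal{F}$ in the form given by Proposition \ref{propqcx}, so that $f$ is replaced by $Qf$, and exploit Remark \ref{asinGlobalMethodBFLM} (i)--(iii) to restrict, whenever convenient, to competitors $v_n\in SBV_0\cap L^\infty$ with uniformly bounded $L^\infty$ norms, and to reduce via the truncations $\phi_j$ to the case where $(v,u)$ itself is bounded. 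The standard device is to introduce the set function $A\mapsto\mathcal{F}(v,u;A)$, show it is the trace on $\mathcal{A}(\Omega)$ of a Radon measure (using the growth conditions $(F_2)$, $(G_2)$ for the upper bound $\mathcal{F}(v,u;A)\le C(|A|+|Du|(A)+\mathcal{H}^{N-1}(J_{(v,u)}\cap A))$ and a De Giorgi--Letta type criterion for the measure property), and then estimate its Radon--Nikod\'ym derivatives with respect to $\mathcal{L}^N$, $|D^cu|$ and $\mathcal{H}^{N-1}\lfloor J_{(v,u)}$ separately.

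For the bulk term, at $\mathcal{L}^N$-a.e.\ Lebesgue point $x_0$ of $(v,\nabla u)$ I would blow up, use the continuity of $Qf$ (Proposition \ref{continuityQfbar}) together with $(F_3)$ to freeze $v$ at $v(x_0)$, and then use the definition \eqref{Qfbar} of the quasiconvex envelope to produce, via a standard cut-off/periodic-test-function construction, a recovery sequence giving $\frac{d\mathcal{F}(v,u;\cdot)}{d\mathcal{L}^N}(x_0)\le Qf(v(x_0),\nabla u(x_0))$. For the Cantor part, at $|D^cu|$-a.e.\ $x_0$ one blows up along cubes $Q(x_0,\varepsilon)$; here $v$ is $|D^cu|$-measurable (Remark \ref{vmeas}) so one can again freeze $v\equiv v(x_0)$ up to a small error controlled by $(F_3)$, and then invoke the known upper bound for the relaxation of the $W^{1,1}$ functional $G(u)=\int Qf(v(x_0),\nabla u)\,dx$ on $BV$ (as in \cite[Theorem 2.19]{FM2}) to get $Qf^\infty(v(x_0),\tfrac{dD^cu}{d|D^cu|}(x_0))$; the positive one-homogeneity and the decay estimate $(F_4)$ for $Qf^\infty$ (Remark \ref{propfinfty} (iv)--(v)) are what make the freezing-of-$v$ error negligible.

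The jump part is the main obstacle. At $\mathcal{H}^{N-1}$-a.e.\ $x_0\in J_{(v,u)}$ one blows up the relevant quantity on cubes $Q_\nu(x_0,\varepsilon)$ with $\nu=\nu_{(v,u)}(x_0)$; the target value is $K_3(v^+(x_0),v^-(x_0),u^+(x_0),u^-(x_0),\nu)$. The natural strategy is: first observe that by definition \eqref{K3} of $K_3$ there is an almost-optimal pair $(\chi,w)\in\mathcal{A}_3(\dots)$ for the cell problem; then, by Proposition \ref{prop3.5BBBF} (and Remark \ref{applicationofprop3.4}), this cell value is also attained as a relaxation along sequences in $SBV_0\times W^{1,1}$ converging to the step profile $(v_0,u_0)$ of \eqref{v0u0}; then rescale this cell solution into the cube $Q_\nu(x_0,\varepsilon)$, periodically in the $N-1$ tangential directions, glue it to the (nearly constant on either side of the hyperplane) data using the layer construction of Lemma \ref{Lemma4.1FM}, which guarantees the boundary values are attained and the energy does not increase, and finally diagonalize in $\varepsilon$ and in the cell-problem index. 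The delicate points are: controlling the mismatch between the true traces $u^\pm(x_0)$, $v^\pm(x_0)$ (recovered in the $L^1$-averaged sense \eqref{4.13}--\eqref{4.15}) and the piecewise-constant model data, which forces an interpolation on a thin transition layer whose energetic cost must be shown to vanish—this is exactly where Lemma \ref{Lemma4.1FM} and the growth/Lipschitz bounds on $Qf^\infty$ and $g$ are used; handling the fact that $K_3$ is only upper (not lower) semicontinuous (Proposition \ref{propK3}), which is harmless here since we want an upper bound and can apply reverse Fatou; and finally reassembling the three local estimates into a global one via the measure-decomposition $\mu=\mu_a\mathcal{L}^N+\mu_c|D^cu|+\mu_j\mathcal{H}^{N-1}\lfloor J_{(v,u)}$ and a partition-of-unity/gluing argument across the bulk, Cantor and jump regions, controlling interface errors with the growth conditions. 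Theorem \ref{mainthmgen} then follows by combining this with the lower bound Theorem \ref{lsctheorem}.
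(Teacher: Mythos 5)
Your proposal has the right skeleton (measure property via Lemma \ref{measure}, reduction to $L^\infty$ data via Remark \ref{asinGlobalMethodBFLM}, separate Radon--Nikod\'ym estimates against $\mathcal{L}^N$, $|D^cu|$ and $\mathcal{H}^{N-1}\lfloor J_{(v,u)}$), but for two of the three density bounds you take a genuinely different route from the paper. For the bulk and Cantor parts the paper does \emph{not} build a recovery sequence: it applies the Global Method of Bouchitt\'e--Fonseca--Mascarenhas \cite[Theorem 4.1.4]{BFMGlobal} to the frozen-$v$ Sobolev functional $G(u;A)=\int_A Qf(v(x),\nabla u)\,dx$, obtaining the abstract densities $f_0$ and $h$ of \eqref{f00}, \eqref{f0}, and then reduces the upper bound to the pointwise comparisons $f_0(x_0,\xi)\le Qf(v(x_0),\xi)$ and $h(x_0,a,\nu)\le Qf^\infty(v(x_0),a\otimes\nu)$, which are proved by Lemma \ref{Lemma0} (swap of $\limsup$ and $\inf$), Lebesgue dominated convergence, $(F_3)$ and quasiconvexity. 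Your direct cut-off/oscillating recovery construction from \eqref{Qfbar} can be made to work, but it duplicates precisely the work the Global Method packages once and for all; the paper's route buys a cleaner, purely density-level argument.

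For the jump part the discrepancy is larger: the paper does \emph{not} blow up. It follows the Ambrosio--Mortola--Tortorelli / Fonseca--M\"uller hierarchical scheme (step profile on a cube, then on general open sets, then polyhedral interfaces, then arbitrary sets of finite perimeter, then countable-valued, then general bounded), using the upper semicontinuity and growth of $K_3$ (Proposition \ref{propK3}), reverse Fatou, and the lower semicontinuity of $\mathcal{F}$ to pass to the general case. Your blow-up strategy is plausible but skips a genuine difficulty: after rescaling the cell competitor into $Q_\nu(x_0,\varepsilon)$ you must glue it to the \emph{actual} pair $(v,u)$, not merely to the step profile $(v_0,u_0)$, and the gluing cost (both the gradient term $\sim\delta^{-1}\|(v,u)-(v_0,u_0)\|_{L^1}$ and the surface contribution along the level set where the cut-off transitions) must be shown to be $o(\varepsilon^{N-1})$; Lemma \ref{Lemma4.1FM} as stated is tailored to matching a sequence to prescribed boundary data for the lower bound, so running it ``in reverse'' here needs a careful restatement. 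You should either carry out that gluing estimate explicitly or adopt the paper's hierarchical approximation, which confines the approximation error to the single Case~3 step where it is absorbed by Proposition \ref{propK3}~(c)--(d). Also a minor point: for the Cantor density the paper cites \cite[Theorem~2.19]{FM2} only in the lower bound; the upper bound uses \cite{BFMGlobal} again.
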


\noindent Before proving the upper bound  we recall our strategy, which was first proposed in \cite{AMT} and further developped in \cite{FM2}. Namely, first we will show that  
${\cal F}(v, u;\cdot )$ is a variational functional with respect to the $L^1$ topology and
\begin{equation}\nonumber
{\cal F}(v,u;\cdot)  \leq \mathcal{L}^{N}+ |Dv|+|Du| + {\cal H}^{N-1}\lfloor{J_v}. 
\end{equation}
Next by Besicovitch differentiation Theorem, a blow-up argument will provide an upper bound estimate in terms of ${\overline F}_0$, first for bulk and Cantor parts, then also for the jump part, when the target functions $(v,u)$ are bounded. Finally the same approximation as in \cite[Theorem 4.9]{AMT}, will give the estimate for every $(v,u)\in SBV_0(\Omega;\mathbb R^m)\times BV(\Omega;\mathbb R^d)$.

We recall that ${\cal F}(v,u ; \cdot)$ is said to be a variational functional with respect to the
$L^1$ topology if
\begin{itemize}
\item[(i)] ${\cal F}(\cdot, \cdot ;A)$ is local, i.e.,
${\cal F}(v,u;A) = {\cal F}(v', u';A)$
for every $v,v' \in SBV_0(A;\mathbb R^m)$, $ u, u' \in BV(A; \mathbb R^d)$ satisfying $u = u'$ , $v= v'$ a.e. in $A$. %and $|Du|\lfloor A= |Du'| \lfloor A$, $|Dv|\lfloor A= |Dv'| \lfloor A$.

\item[(ii)] ${\cal F}(\cdot, \cdot;A)$ is sequentially lower semicontinuous, i.e., if $v_n, v \in  BV(A; \mathbb R^m)$, $u_n, u \in BV(A;\mathbb R^d)$
and $v_n \to v$  in $L^1(A; \mathbb R^m)$,  $u_n \to u$ in $L^1(A;\mathbb R^d)$ then
${\cal F}(v, u; A) \leq \liminf_{n \to \infty} {\cal F}(v_n,u_n ; A)$.

\item[(iii)] ${\cal F}(\cdot, \cdot ;A)$ is the trace on $\{A \subset \Omega: A \hbox{ is open}\}$  
of a Borel measure on ${\cal B}(\Omega) $ the family of all Borel subsets of $\Omega$.

\end{itemize}

Since the lower semicontinuity and the locality of ${\cal F}(\cdot, \cdot; A)$ follow by its definition, it remains to prove $(iii)$. This is the target of the following lemma, where  $(iii)$ will be obtained via a refinement of De Giorgi-Letta criterion, cf. \cite[Corollary 5.2]{DMFL}.

\begin{lemma}
\label{measure} Let $\Omega\subset\mathbb{R}^{N}$ be an open bounded set with
Lipschitz boundary and let  $f$ and $g$ be as in Theorem \ref{thupperbound}. For  every $\left( v,u\right)  \in
SBV_0\left(  \Omega;\mathbb R^m \right)  \times BV\left(
\Omega;\mathbb{R}^{d}\right) $, the set
function $\mathcal{F}\left(  v,u;\cdot\right)  $  in \eqref{calFG} is the trace of a Radon
measure absolutely continuous with respect to $\mathcal{L}^{N}+ |Dv|+\left\vert
Du\right\vert  + {\cal H}^{N-1}\lfloor{J_v}.$
\end{lemma}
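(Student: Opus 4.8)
The plan is to verify the hypotheses of the De Giorgi--Letta type criterion stated in \cite[Corollary 5.2]{DMFL}, which reduces the problem to checking that $\mathcal{F}(v,u;\cdot)$, as a set function on $\mathcal{A}(\Omega)$, is (a) subadditive, (b) superadditive, (c) inner regular, and (d) dominated above by the Radon measure $\lambda:=\mathcal{L}^{N}+|Dv|+|Du|+\mathcal{H}^{N-1}\lfloor J_v$. The absolute continuity with respect to $\lambda$ will then follow automatically once the domination in (d) is established, since the extension provided by the criterion inherits the bound. Throughout I fix $(v,u)\in SBV_0(\Omega;\mathbb{R}^m)\times BV(\Omega;\mathbb{R}^d)$ and work with the localized functional in \eqref{calFG}.

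First I would prove the upper bound (d). Given $A\in\mathcal{A}(\Omega)$, the constant sequence $(v_n,u_n)=(v,u)$ is not admissible in general (one needs $u_n\in W^{1,1}$), so I would instead mollify: by Proposition \ref{prop2.4FM1}(ii) there is a sequence $(\widetilde v_n,\widetilde u_n)\subset C_0^\infty\times C_0^\infty$ converging strictly in $BV$ to $(v,u)$; evaluating $F$ on this sequence and using the growth bounds $(F_2)$ and $(G_2)$ (the latter applied on $J_{\widetilde v_n}=\emptyset$, so only the bulk term survives for the smooth approximants of $v$) together with strict convergence gives $\mathcal{F}(v,u;A)\le C\lambda(A)$; more precisely $\int_A f(\widetilde v_n,\nabla\widetilde u_n)\,dx\le\beta(|A|+\int_A|\nabla\widetilde u_n|\,dx)\to\beta(|A|+|Du|(A))$ and one must also carry the $v$-part, which for smooth approximants contributes through $(F_3)$ only via an $L^1$-convergent term; the jump contribution of $v$ is recovered in the limit as part of $|Dv|(A)$ after undoing the mollification. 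Actually the cleaner route, which I would adopt, is to invoke Remark \ref{asinGlobalMethodBFLM}(i)--(ii) to reduce to bounded target functions and then use a direct competitor built from a partition (Lemma \ref{lemma31BCD}) for $v$ plus mollification for $u$; in any case the bound $\mathcal{F}(v,u;A)\le C\lambda(A)$ is the only thing needed for (d).

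Next, nested subadditivity. For $A,B,C\in\mathcal{A}(\Omega)$ with $C\subset\subset B\subset A$, I would take near-optimal sequences $(v_n',u_n')$ for $\mathcal{F}(v,u;B)$ and $(v_n'',u_n'')$ for $\mathcal{F}(v,u;A\setminus\overline C)$ and glue them across a thin annulus $B\setminus\overline C$ using cutoff functions $\varphi\in C_c^\infty(B)$, $\varphi\equiv1$ on $C$: set $u_n:=\varphi u_n'+(1-\varphi)u_n''$ and define $v_n$ by the Fleming--Rishel slicing device exactly as in Step 2 of the proof of Lemma \ref{Lemma4.1FM}, choosing the level set $\{\varphi>t\}$ so that its reduced boundary carries no extra $\mathcal{H}^{N-1}$-mass from $J_{v_n'}$ or $J_{v_n''}$. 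The error terms concentrate in $B\setminus\overline C$ and are controlled by $\lambda(B\setminus\overline C)$ via $(F_2)$, $(G_2)$; averaging over a family of annuli (a De Giorgi slicing argument, as in \cite{FM1}) makes this error vanish, yielding $\mathcal{F}(v,u;A)\le\mathcal{F}(v,u;B)+\mathcal{F}(v,u;A\setminus\overline C)$, and then $\le\mathcal{F}(v,u;B)+C\lambda(A\setminus\overline C)$. Superadditivity is easy: if $A_1,A_2\in\mathcal{A}(\Omega)$ are disjoint and $(v_n,u_n)$ is admissible for $\mathcal{F}(v,u;A_1\cup A_2)$, its restrictions are admissible for each $A_i$ and the energies add, so $\mathcal{F}(v,u;A_1)+\mathcal{F}(v,u;A_2)\le\mathcal{F}(v,u;A_1\cup A_2)$. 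Inner regularity $\mathcal{F}(v,u;A)=\sup\{\mathcal{F}(v,u;A'):A'\subset\subset A\}$ follows from nested subadditivity combined with the upper bound (d): $\mathcal{F}(v,u;A)\le\mathcal{F}(v,u;A')+C\lambda(A\setminus\overline{A'})$ and $\lambda(A\setminus\overline{A'})\to0$ as $A'\uparrow A$ since $\lambda$ is a finite Radon measure. With subadditivity, superadditivity, inner regularity and the domination by $\lambda$ in hand, \cite[Corollary 5.2]{DMFL} produces the Radon measure and its absolute continuity with respect to $\lambda$.

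The main obstacle is the gluing construction in the subadditivity step: one must simultaneously interpolate the Sobolev field $u$ and the $SBV_0$ field $v$ across the annulus without creating uncontrolled jump energy, and the natural tool for $v$ (slicing along level sets of the cutoff, as in Lemma \ref{Lemma4.1FM}) forces one to track the interplay between the perimeter-type term $|D\chi_{\{\varphi>t\}}|$ and the growth of $g$, choosing the slicing level via Fleming--Rishel so that these contributions stay $O(\lambda(\text{annulus}))$. The delicate point is that $g$ has only linear growth in $|v^+-v^-|$ but a nonzero lower-order part (the ``$1$'' in $(G_2)$), so the newly created interface on $\partial^\ast\{\varphi>t\}$ contributes a term proportional to $\mathcal{H}^{N-1}$ of that interface, which must be absorbed into $\mathcal{H}^{N-1}\lfloor J_v$-mass of the annulus or shown to be negligible after averaging; this is precisely where the De Giorgi averaging over many annuli, together with the finiteness of $\lambda$, is essential, and it is the part of the argument requiring the most care.
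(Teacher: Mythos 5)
Your proposal follows the paper's approach in broad strokes: both invoke the De Giorgi--Letta type criterion from \cite[Corollary 5.2]{DMFL}, both establish the domination by $\lambda:=\mathcal{L}^{N}+|Dv|+|Du|+\mathcal{H}^{N-1}\lfloor J_v$, and both prove nested subadditivity by gluing near-optimal sequences across an annulus, using a smooth cutoff for the Sobolev variable and a Fleming--Rishel level-set selection for the $SBV_0$ variable. Two remarks are in order.

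First, your initial route for the domination estimate is not viable as stated: Proposition \ref{prop2.4FM1}(ii) produces $\widetilde v_n\in C_0^\infty$ with nonzero gradient, and such functions are not in $SBV_0$ (which requires $\nabla v_n=0$), so they are inadmissible competitors and plugging them into $F$ would give $+\infty$; that proposition is only an auxiliary tool for the bulk energy, not a supply of test sequences for $\mathcal{F}$. You do recognize this and fall back on the valid alternative (keep $v$ as is, mollify only $u$, and use Remark \ref{asinGlobalMethodBFLM} and Lemma \ref{lemma31BCD}), which works and is consistent with the paper's reference to \cite[Lemma 2.6 and Remark 2.7]{BFMGlobal} and \cite[Lemma 4.7]{BZZ}. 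You should state that route as the actual proof rather than as an afterthought.

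Second, in the subadditivity step the paper does not literally average over a family of annuli in the De Giorgi style: it fixes the annuli $L_\delta$ around $\partial B_\eta$, extracts weak-$\ast$ limits $\nu,\nu_1$ of the error measures (generated by the gradients, jump sets, and newly created interfaces), lets $\delta\to 0^+$ to concentrate the error on $\partial B_\eta$, and finally selects a subsequence $\eta_i\to 0^+$ along which $\nu(\partial B_{\eta_i})=\nu_1(\partial B_{\eta_i})=0$ (atoms of a finite measure being at most countable). This is the standard cousin of the averaging device you describe, and either formulation yields the same conclusion; the paper's version keeps closer track of where the created $\mathcal{H}^{N-1}$-mass on $\partial^\ast\{\varphi_\delta<t\}$ lives, which is precisely the delicate point you correctly flag in your final paragraph. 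One further detail worth recording: the paper first carries out the argument for $v\in SBV_0\cap L^\infty$ and then removes the boundedness via Remark \ref{asinGlobalMethodBFLM}(iii); this reduction is needed so that the $L^\infty$ bound on the $v$-competitors (from Remark \ref{asinGlobalMethodBFLM}(i)) is available when estimating the jump contributions on the glued interfaces.
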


\begin{proof}
An argument very similar to \cite[ Lemma 2.6 and Remark 2.7]{BFMGlobal} and \cite[Lemma 4.7]{BZZ} entails
\[
\mathcal{F}(v,u; A)\leq C\left(  \mathcal{L}^{N}(A)+|D v
|(A)+|Du|(A)+ {\cal H}^{N-1}\lfloor{J_v}(A)\right)  .
\]

\noindent By \cite[Corollary 5.2]{DMFL} to obtain $(iii)$ it suffices to prove that
\begin{equation}\nonumber
\displaystyle\mathcal{F}{(v,u;A)\leq\mathcal{F}(v,u;B)+\mathcal{F}%
(v,u;A\setminus\overline{U})}
\end{equation}
\noindent for all $A,U,B\in\mathcal{A}(\Omega)$ with $U\subset\subset B\subset\subset A$,  $u\in BV(\Omega;\mathbb{R}^{d})$ and $v\in SBV_0(\Omega;\mathbb R^m)$.

We start by assuming that $v \in SBV_0(\Omega;\mathbb R^m)\cap L^\infty(\Omega;\mathbb R^m)$.

Fix $\eta>0$ and find $\{w_{n}\}\subset W^{1,1}\left(  (A\setminus\overline
{U});\mathbb{R}^{d}\right)  $, $\{v
_{n}\}\subset SBV_0(A\setminus\overline{U};\mathbb R^m)\cap L^\infty(A\setminus\overline{U};\mathbb R^m)$   
(cf. Remark \ref{asinGlobalMethodBFLM}) such that $w_n\rightarrow
u$ in $L^{1}((A\setminus\overline{U});\mathbb{R}^{d})$ and $v
_{n}\rightarrow v$ in $L^{1}((A\setminus\overline{U});\mathbb R^m)$ and%
\begin{equation}
{\limsup_{n\rightarrow\infty}\int_{A\setminus\overline{U}}f\left( v_n,  \nabla w_n  \right)  dx+\int_{A \setminus {\overline U} \cap J_{v_n}} g(v_n^+. v_n^-, \nu_{v_n})d {\cal H}^{N-1}  \leq\mathcal{F}(v,u;A\setminus{\overline{U}})+\eta.}
\label{E1}%
\end{equation}

Extract a subsequence still denoted by $n$ such that the above upper limit is
a limit.

Let $B_0$ be an open subset of $\Omega$ with Lipschitz boundary
%Let $B_{0}\in\mathcal{R}(\Omega)$ be
 such that $U\subset\subset B_{0}%
\subset\subset B$. Then there exist $\{u_{n}\}\subset W^{1,1}(B_{0}%
;\mathbb{R}^{d})$ and $\{\overline{v}_{n}\}\subset SBV_0\left(  B_{0}%
;\mathbb R^m\right) \cap L^\infty(B_0;\mathbb R^m) $ 
(cf. (i) in Remark \ref{asinGlobalMethodBFLM}) such that $u_{n}\rightarrow u$ in $L^{1}(B_{0}%
;\mathbb{R}^{d})$ and $\overline{v}_{n}\rightarrow v$ in $L^{1}%
(B_{0};\mathbb R^m)$ and%
\begin{equation}
\mathcal{F}{(v,u;B_{0})=\lim_{n\rightarrow\infty}}\left( \int_{B_0}f({\overline v_n},\nabla u_n)dx + \int_{J_{\overline v_n}\cap B_0}g({\overline v_n}^+, {\overline v_n}^-, \nu_{\overline v_n})d {\cal H}^{N-1} \right). \label{E2}%
\end{equation}

\noindent For every $(\overline{v}, w) \in SBV_0(A;\mathbb R^m)\cap L^\infty(A;\mathbb R^m) \times W^{1,1}(A;\mathbb R^d)$,  consider $\displaystyle{\mathcal{G}_{n}({\overline v}, w;A):=\int_{A}\left(  1+|\nabla w|\right)  dx}$ $\displaystyle{+ (1+[{\overline v}]){\cal H}^{N-1}\lfloor{(J_{\overline v} \cap A)}}$.
%+ \left|D_\alpha \tilde{\chi}, %\frac{1}{\e}_n D_3 \tilde{\chi}\right|(A %\times I)$.

\noindent Due to the coercivity \eqref{H1}, we may extract a bounded subsequence not
relabelled, from the sequence of measures $\nu_{n}:=\mathcal{G}_{n}(v_n, w_n;\cdot)+\mathcal{G}_{n}({\overline v}_n, u_n;\cdot)$ restricted to $B_{0}\setminus
\overline{U}$, converging in the sense of distributions to some Radon measure
$\nu$, defined on $B_{0}\setminus\overline{U}$.
Analogously, for every $w  \in SBV_0(A;\mathbb R^m)\cap L^\infty(A;\mathbb R^m)$ we could define a sequence of measures ${\cal H}_n(w;E)$:=$\int_{J_w \cap E}d {\cal H}^{N-1}$.

\noindent For every $t >0$ , let $B_{t}:= \left\{  x\in B_{0} | \mathrm{dist}%
(x, \partial B_{0}) > t\right\}  $. Define, for $0 < \delta< \eta$,
the subsets $L_{\delta}:= B_{\eta- 2 \delta} \setminus\overline{B_{\eta+
\delta}}.$ Consider a smooth cut-off function $\varphi_{\delta}\in C^{\infty
}_{0}(B_{\eta-\delta};[0,1])$ such that $\varphi_\delta(x)= 1$ on $B_{\eta}%
$. As the thickness of the strip is of order $\delta$, we have an upper bound
of the form $\|\nabla\varphi_{\delta}\|_{L^{\infty}(B_{\eta-\delta})}
\leq\frac{C}{\delta}.$

\noindent Define ${\overline w_n}(x):=\varphi_{\delta}(x)u_{n}(x)+(1-\varphi_{\delta}(x))w_{n}(x)$. Clearly, $\left\{  {\overline w_n}\right\}  $ converges to $u$ in $L^{1}(A)$ as
$n\rightarrow\infty$, and%
\[
\nabla {\overline w_n}=\varphi_{\delta}\nabla u_{n}+(1-\varphi_{\delta})\nabla
w_{n}+\nabla\varphi_{\delta}\otimes(u_{n}-w_{n}).
\]

\noindent Arguing as in \cite[Lemma 4.4]{ABr1}, we may consider a sharp transition for the  %({\bf when we just consider $BV(\Omega;T)$ functions everything follows works very well...the troubles are for $SBV_0$)
\noindent $SBV_0$ functions, namely 
%$\varphi\in
%C^{1}(\Omega)$ such that $0\leq\varphi\leq1$ and $\varphi\equiv0$ in %$B_{\eta
%}$ and $\varphi\equiv1$ in $A\setminus\overline{B_{\eta-\delta}}$, %clearly
%$\Vert\nabla\varphi\Vert_{L^{\infty}}\leq\frac{C}{\delta}$. 
\noindent let $\{v_{n}\}$
and $\{{\overline v_n}\}$ be as above, then for every  $0<t <1$ we
may define  $\tilde{v}_{n}^{t}$ such that $\tilde{v}_{n}^{t}%
\rightarrow v$ in $L^{1}(A)$ as $n\rightarrow\infty$, and%

\[
\tilde{v}_{n}^{t}(x):=\left\{
\begin{array}
[c]{ll}%
v_n(x) & \hbox{ in }\{x:\varphi_\delta(x)<t\},\\
{\overline v_n}(x) & \hbox{ in }\{x:\varphi_\delta(x)\geq t\}.
\end{array}
\right.
\]
\noindent Clearly $\tilde{v}_{n}^{t}(x)\in\{v_{n}(x),\overline{v}%
_{n}(x)\}$ almost everywhere in $A$,
 and since $\mathcal{H}^{N-1}(J_{v_n}), {\mathcal H}^{N-1}(J_{\overline v_n}) < +\infty$
%By Fleming-Rishel coarea formula  \eqref{FR} 
for all but at most countable $t \in \left] 0,1\right[ $ it results
that
% $\left\{ x\in \Omega :\varphi _{\delta }( x\right) <t\} $
%has finite perimeter and 
\[
\mathcal{H}^{N-1}\left( J_{v_{n}}\cap \left\{ x\in A:\varphi _{\delta
}\left( x\right) =t\right\} \right) =\mathcal{H}^{N-1}\left( J_{\overline{v}%
_{n}}\cap \left\{ x\in A:\varphi _{\delta }\left( x\right) =t\right\}
\right) =0.
\]

%{\bf By the above equalities we can observe that}
%$$
%{\cal H}_n ({\tilde v}_n^t, A\cap \partial^{\ast}\{\varphi_\delta < t\} ) \leq \int_{\partial^{\ast}\{\varphi_\delta < t\}} d {\cal %H}^{N-1}
%$$
Moreover, using coarea formula \eqref{FR} and the mean value theorem it is possible to find a $t$ for which the integral over the level set is comparable with the double integral with $t$ varying between $0$ and $1$.
Thus we have
$$
\int_{\partial^{\ast}\{\varphi_\delta < t\}} d {\cal H}^{N-1}\leq  \frac{C}{\delta} {\cal L}^N(B_{\eta - \delta} \setminus B_\eta) \leq C.
$$
An analogous reasoning provides for the same $t$ that
\begin{equation}\label{secondmeanvalue}
{\int_{\partial^{\ast}\{\varphi_\delta<t\}}|[{\tilde v_n}^t]|d {\cal H}^{N-1}} \leq\frac{C}{\delta
}\int_{B_{\eta-\delta}\setminus B_{\eta}}|v_{n}(x)-\overline{v}_{n}(x)|dx.
\end{equation}
Thus, as  for the $\{{\cal G}_n\}$ above, we may extract a bounded subsequence not
relabelled, from the sequence of measures ${\cal H}_n ({\tilde v}_n^t, \cdot)$,  restricted to $B_0\setminus \overline U \cap \partial^\ast\{\varphi_\delta <t\}$, converging in the sense of distributions to some Radon measure
$\nu_1$, defined on $B_{0}\setminus \overline U$.

%{\bf for a. e. $\eta$ and for  a.e. $t$.Thus as $\delta \to $0$, up to a subsequence the measures ${\cal H}^{N-1 %(\partial^{\ast}\{\varphi_\delta < t\})$ goes to a measure $\nu_1$}
By \eqref{H1} we have the estimate%
\[
\begin{array}
[c]{l}
\displaystyle{\int_A f\left(  {\tilde v_n}^t,  \nabla {\overline w_n}\right)  dx+ \int_{A \cap J_{\tilde v_n^t}} g(({\tilde v}_n^t)^+, ({\tilde v_n}^t)^-, \nu_{{\tilde v_n}^t})d {\cal H}^{N-1}}\\
\\
\leq{\displaystyle\int_{B_{\eta}}
f({\overline v_n}, \nabla u_n)dx+ 
\int_{J_{\overline v_n}\cap B_\eta}g({\overline v_n}^+, {\overline v_n}^-, \nu_{\overline v_n })d {\cal H}^{N-1}}\\
\\
+{\displaystyle\int_{(A\setminus\overline{B_{\eta-\delta}})}
f\left( v_n,  \nabla w_n\right)dx + \int_{J_{v_n}\cap(A\setminus\overline{B_{\eta-\delta}}) }g( v_n^+, v_n^-, \nu_{v_n })d {\cal H}^{N-1}} \\
\\
+C\left( \mathcal{G}_n(v_n, w_n; L_{\delta})+{\mathcal G}_n({\overline v_n}, u_n;L_{\delta})\right)  +
\frac{1}{\delta} \displaystyle{\int_{L_{\delta}}
|w_n-u_n|dx+\int_{\partial^{\ast}\{\varphi_\delta<t\}}|[{\tilde v_n}^t]|d {\cal H}^{N-1}+ {\cal H}_n ({\tilde v}_n^t; L_\delta \cap \partial^{\ast}\{\varphi_\delta<t\})}
\\
\\
\leq {\displaystyle\int_{B_0}
f({\overline v_n}, \nabla u_n)dx+ 
\int_{J_{\overline v_n}\cap B_0}g({\overline v_n}^+, {\overline v_n}^-, \nu_{\overline v_n })d {\cal H}^{N-1}}\\
\\
+{\displaystyle\int_{(A\setminus\overline{U})}
f\left( v_n,  \nabla w_n\right)dx + \int_{J_{v_n}\cap(A\setminus\overline{U}) }g( v_n^+, v_n^-, \nu_{v_n })d {\cal H}^{N-1}} \\
\\
+C\left( \mathcal{G}_n(v_n, w_n, L_{\delta})+{\mathcal G}_n({\overline v_n}, u_n,L_{\delta})\right)  +
\frac{1}{\delta} \displaystyle{\int_{L_{\delta}}
|w_n-u_n|dx+\int_{\partial^{\ast}\{\varphi_\delta<t\}}|[{\tilde v_n}^t]|d {\cal H}^{N-1}+ {\cal H}_n ({\tilde v}_n^t; L_\delta \cap \partial^{\ast}\{\varphi_\delta<t\})}
 \end{array}
\]

Passing to the limit as $n\rightarrow\infty$, and applying \eqref{E1}, 
\eqref{E2}, \eqref{secondmeanvalue}  and the $L^1$ convergence of
$\{v_n\}$ and $\{\overline v_n\}$ to $v$, 
it results that
\begin{align*}
\mathcal{F}{(v,u;A)}  &  {\leq\mathcal{F}}({v,u;B_{0})+\mathcal{F}%
(v,u;A\setminus{\overline{U}})+\eta+C\nu(\overline{L_{\delta}}) + C \nu_1(\overline{L_\delta})
+\limsup_{n\rightarrow\infty}\int_{\partial^{\ast}\{\varphi_\delta<t\}}|[{\tilde v_n}^t]|d {\cal H}^{N-1}}\\
&  \leq\mathcal{F}{(v,u;B)+\mathcal{F}(v,u;A\setminus{\overline{U}%
})+\eta+C\nu(\overline{L_{\delta}})+ C\nu_1(\overline{L_{\delta}}).}%
\end{align*}
%since,  by \eqref{secondmeanvalue} 
%\[
%{\limsup_{n\rightarrow\infty}\int_{\partial^{\ast}\{\varphi_\delta<t\}}|[{\tilde v_n}^t]|d {\cal H}^{N-1}} \leq\frac{C}{\delta
%}\underset{n\rightarrow\infty}{\lim\sup}\int_{B_{\eta-\delta}\setminus B_{\eta}}|v_{n}(x)-\overline{v}_{n}(x)|=0.
%\]
\noindent Letting $\delta$ go to $0$ we obtain
\[
\displaystyle\mathcal{F}{(v,u;A)\leq\mathcal{F}(v,u;B)+\mathcal{F}%
(v,u;(A\setminus{\overline{U}}))+\eta+C\nu(\partial B_{\eta})+ C\nu_1(\partial B_{\eta}).}%
\]

\noindent It suffices to choose a subsequence $\{\eta_{i}\}$ such that $\eta_i
\to 0^{+}$ and $\nu(\partial B_{\eta_i})=\nu_1(\partial B_{\eta_i})=0$, to conclude the proof of subadditivity in the case $v \in SBV_0\cap L^\infty$.

%Now it suffices to apply Proposition \ref{proposition measure}.

\noindent In the general case, by virtue of Remark \ref{asinGlobalMethodBFLM}, we can argue as in the last part of Theorem 10 in \cite{BFLM}.\end{proof}

\medskip

\medskip
\begin{proof}
[Proof of Theorem \ref{thupperbound}]
We assume  first that $(v,u)\in (SBV_0(\Omega;\mathbb R^m)\times BV(\Omega;\mathbb R^d)) \cap L^\infty(\Omega;\mathbb R^{m+d})$.

\noindent\textbf{Step 1.} In order to prove the upper bound,  we start by recalling that by Proposition \ref{propqcx} we can  replace $Qf$ by $f$  in \eqref{calFG}. First we deal with the bulk part. 

%$u\in BV\left(  \Omega;\mathbb{R}^{d}\right)  \cap L^{\infty}\left(
%\Omega;\mathbb{R}^{d}\right)  .$ \textbf{(In the end we expect to be %able to
%pass to the limit as in FM2)}

Since the $\mathcal{F}\left(  v,u;\cdot\right)  $ is a measure absolutely
continuous with respect to $\mathcal{L}^{N}+\left\vert Du\right\vert
+(1+ [v]){\cal H}^{N-1}\lfloor{J_v} $ we claim that
\[
\frac{d\mathcal{F}\left(  v,u;\cdot\right)  }{d\mathcal{L}^{N}}\left(
x_{0}\right)  \leq Qf\left(  v\left(  x_{0}\right)  ,\nabla u\left(
x_{0}\right)  \right)
\]
for $\mathcal{L}^{N}-$a.e. $x_{0}\in\Omega$ where $x_{0}$ is a Lebesgue point
of $v$ and $u$ such that
\begin{equation}%
\begin{array}
[c]{l}%
\lim\limits_{\varepsilon\rightarrow0^{+}}\frac{1}{\varepsilon}\left\{
\frac{1}{\varepsilon^{N}}\int_{B\left(  x_{0},\varepsilon\right)  }\left\vert
u\left(  x\right)  -u\left(  x_{0}\right)  -\nabla u\left(  x_{0}\right)
\left(  x-x_{0}\right)  \right\vert ^{\frac{N}{N-1}}dx\right\}  ^{\frac
{N-1}{N}}=0,\medskip\\
\lim\limits_{\varepsilon\rightarrow0^{+}}\frac{1}{\varepsilon}\left\{
\frac{1}{\varepsilon^{N}}\int_{B\left(  x_{0},\varepsilon\right)  }\left\vert
v\left(  x\right)  -v\left(  x_{0}\right)  \right\vert ^{\frac{N}{N-1}%
}dx\right\}  ^{\frac{N-1}{N}}=0,\medskip\\
\mu_{a}\left(  x_{0}\right)  =\lim\limits_{\varepsilon\rightarrow0^{+}}%
\frac{\mu\left(  B\left(  x_{0},\varepsilon\right)  \right)  }{\mathcal{L}%
^{N}\left(  B\left(  x_{0},\varepsilon\right)  \right)  }<\infty.
\end{array}
\label{upper1}%
\end{equation}

Let $U:=\left( v,u\right) .$ By $\left(  \ref{upper1}\right)  $ and Theorems \ref{thm2.6BBBF} and \ref{thm2.8FM2}
for $\mathcal{L}^{N}-$a.e. $x_{0}%
\in\Omega$ we have%
\begin{align}
&  \lim_{\varepsilon\rightarrow0^{+}}\frac{1}{\mathcal{L}^{N}\left(  B\left(
x_{0},\varepsilon\right)  \right)  }\int_{B\left(  x_{0},\varepsilon\right)
}\left\vert U\left(  x\right)  -U\left(  x_{0}\right)  \right\vert \left(
1+\left\vert \nabla U\left(  x\right)  \right\vert \right)  dx=0,\nonumber\\
&  \lim_{\varepsilon\rightarrow0^{+}}\frac{\left\vert D_{s}U\right\vert
\left(  B\left(  x_{0},\varepsilon\right)  \right)  }{\mathcal{L}^{N}\left(
B\left(  x_{0},\varepsilon\right)  \right)  }=0,\nonumber\\
&  \lim_{\varepsilon\rightarrow0^{+}}\frac{\left\vert DU\right\vert \left(
B\left(  x_{0},\varepsilon\right)  \right)  }{\mathcal{L}^{N}\left(  B\left(
x_{0},\varepsilon\right)  \right)  }\text{ exists and it is finite,}%
\label{upper3}\\
&  \lim_{\varepsilon\rightarrow0^{+}}\frac{1}{\mathcal{L}^{N}\left(  B\left(
x_{0},\varepsilon\right)  \right)  }\int_{B\left(  x_{0},\varepsilon\right)
}Qf\left( v\left(  x_{0}\right)  ,\nabla u\left(  x\right)  \right)
dx=Qf\left( v\left(  x_{0}\right)  ,\nabla u\left(  x_{0}\right)  \right)
,\nonumber\\
&  \frac{d\mathcal{F}\left(  v,u;\cdot\right)  }{d\mathcal{L}^{N}}\left(
x_{0}\right)  \text{ exists and it is finite.}\nonumber
\end{align}

We  observe that the assumptions imposed on $f$ and Proposition \ref{continuityQfbar} 
%we can apply the Global Method because our new density coincides with a Borel function Lebesgue a. e.
allow us to apply for  every $v \in SBV_0(\Omega;\mathbb R^m)$ the Global Method (cf. \cite[Theorem 4.1.4]{BFMGlobal}) to the functional
$u \in W^{1,1}(\Omega;\mathbb R^d) \times {\cal A}(\Omega)\to G(u;A):= \int_A Qf(v(x),\nabla u(x))dx$, thus obtaining an integral representation for the relaxed functional 
\begin{equation}\label{auxrelax}
\displaystyle{{\cal G}(u;A) = \inf\left\{\liminf_{n \to \infty} G(u_n; A): u_n \to u \hbox{ in } L^1(A;\mathbb R^d)\right\}}
\end{equation}
for every $ (u,A)\in BV(\Omega;\mathbb R^d)\times {\cal A}(\Omega)$.

Recall that the growth condition $(G_2)$, the lower semicontinuity with respect to the $L^1$-topology of the functional $v \in SBV_0(\Omega;\mathbb R^m)\mapsto ( (1+[v]){\cal H}^{N-1}\lfloor{(J_v \cap A)}$ entails
\begin{equation}\label{FestG}
{\cal F}(v, u;A)\leq {\cal G}(u;A)+ (1+[v]){\cal H}^{N-1}\lfloor{(J_v \cap A)}, 
\end{equation}

Differentiating with respect to ${\cal L}^N$ at $x_0$ and exploiting \eqref{upper1} and \eqref{upper3} we obtain that
$$
\displaystyle{\frac{d{\cal F}((v, u);\cdot)}{d {\cal L}^N}(x_0) \leq f_0(x_0, \nabla u(x_0))},
$$
where for every $x_0 \in \Omega$ and $\xi \in \mathbb R^d$, $f_0(x_0,\xi)$ is given as in \cite[formula (4.1.5)]{BFMGlobal}, namely
\begin{equation}\label{f00}
f_0(x_0,\xi):=\limsup_{\varepsilon \to 0^+} \inf_{\begin{array}{ll}
z\in W^{1,1}(Q;\mathbb R^d)\\
z(y)= \xi y \hbox{ on }\partial Q
\end{array}}\left\{\int_Q Qf(v(x_0+\varepsilon y), \nabla z(y))dy\right\}.
\end{equation}
To conclude the proof  we claim that $f_0(x_0,\xi) \leq Qf(v(x_0),\xi)$ for every $x_0 \in \Omega$ satisfying \eqref{upper1} and \eqref{upper3} and $\xi \in \mathbb R^d$.

By virtue of Lemma \ref{Lemma0} we have that
$$
\begin{array}{ll}
\displaystyle{\limsup_{\varepsilon \to 0^+} \inf_{\begin{array}{ll}
z\in W^{1,1}(Q;\mathbb R^d)\\
z(y)= \xi y \hbox{ on }\partial Q
\end{array}}\left\{\int_Q Qf(v(x_0+\varepsilon y), \nabla z(y))dy\right\}} \\
\leq
\displaystyle{\inf_{\begin{array}{ll}
z\in W^{1,1}(Q;\mathbb R^d)\\
z(y)= \xi y \hbox{ on }\partial Q 
\end{array}}\left\{\limsup_{\varepsilon \to 0^+}\int_Q Qf(v(x_0+\varepsilon y), \nabla z(y))dy\right\}.}
\end{array}
$$
Computing the $\limsup$ on the right hand side, we have
$$
\begin{array}{ll}
\displaystyle{\limsup_{\e \to 0^+} \int_Q Qf(v(x_0+ \varepsilon y), \nabla z(y))dy}\\
\displaystyle{=\limsup_{\e \to 0^+} \left(\int_Q Qf(v(x_0+ \varepsilon y), \nabla z(y))dy - \int_Q Qf(v(x_0), \nabla z(y))dy\right) + \int_Q Qf(v(x_0), \nabla z(y))dy.}
\end{array}
$$
Since $x_0$ is a Lebesgue point for $v$, and recalling that $v \in SBV_0(Q;\mathbb R^m)\cap L^\infty(Q;\mathbb R^m)$, by Lebesgue dominated convergence theorem and $(F_3)$ applied to $Qf$ (see Proposition \ref{continuityQfbar}),  we have that
$$
\begin{array}{ll}
\displaystyle{\limsup_{\e \to 0^+} \left(\int_Q Qf(v(x_0+ \varepsilon y), \nabla z(y))dy - \int_Q Qf(v(x_0), \nabla z(y))dy\right)}\\
\displaystyle{ \leq \limsup_{\e \to 0^+} \int_Q L|v(x_0+ \varepsilon y)- v(x_0)|(1+ |\nabla z(y)|)dy =0.}
\end{array}
$$
Hence 
$$
\displaystyle{\limsup_{\e \to 0^+} \int_Q Qf(v(x_0+ \varepsilon y), \nabla z(y))dy=\int_Q Qf(v(x_0), \nabla z(y))dy.}
$$
By the quasiconvexity of $Qf(v(x_0),\cdot)$, and \eqref{f00} one obtains
$$
f_0(x_0,\xi) \leq Qf(v(x_0), \xi),
$$
which concludes the proof, when replacing $\xi$ by $\nabla u(x_0)$.

\noindent\textbf{Step 2.} We prove the upper bound for the Cantor part.

By Radon-Nikod\'ym theorem we can write
\begin{equation}\label{CantorU}
\left\vert DU\right\vert =\left\vert D^{c}u\right\vert +\sigma
\end{equation}
where $U:=(v,u) \in (SBV_0(\Omega;\mathbb R^m) \times BV(\Omega;\mathbb R^d))\cap L^\infty(\Omega;\mathbb R^{m+d})$, $\sigma$ and $\left\vert D^{c}u\right\vert $ are mutually singular Radon
measures.

Observe that $U \equiv \left(v,u\right)  $ is $\left\vert D^{c}u\right\vert -$measurable,
$D v$ is singular with respect to $\left\vert D^{c}u\right\vert $ and by
Theorems \ref{thm2.6BBBF}, \ref{thm2.8FM2}, and \cite[Theorem 2.11]{FM2} for $\left\vert D^{c}u\right\vert -$a.e.
$x\in B\left(  x_{0},\varepsilon\right)  $%
\begin{equation}\label{cantor}
\begin{array}{ll}
\displaystyle{\lim_{\varepsilon\rightarrow0^{+}}\frac{\mu\left(  B\left(  x_{0}%
,\varepsilon\right)  \right)  }{\left\vert D^{c}u\right\vert \left(  B\left(
x_{0},\varepsilon\right)  \right)  }=0,}
\\
\\
\displaystyle{\lim_{\varepsilon\rightarrow0^{+}}\frac{\left\vert Du\right\vert \left(
B\left(  x_{0},\varepsilon\right)  \right)  }{\left\vert D^{c}u\right\vert
\left(  B\left(  x_{0},\varepsilon\right)  \right)  }\text{ exists and is
finite }}
\\
\\
\displaystyle{\lim_{\varepsilon\rightarrow0^{+}}\frac{\varepsilon^{N}}{\left\vert D^{c}u\right\vert\left(  B\left(  x_{0},\varepsilon\right)  \right)  }=0,}\\
\\
\displaystyle{\lim_{\varepsilon\rightarrow0^{+}}\frac{1}{{\cal L}^N 
\left(  B\left(  x_{0},\varepsilon\right)  \right)  }\int_{B\left(
x_{0},\varepsilon\right)  }\left(\left\vert u\left(  x\right)  -u\left(
x_{0}\right)  \right\vert +\left\vert v\left(  x\right)  -v\left(
x_{0}\right)  \right\vert\right) d x =0.}
\end{array}
\end{equation}

Moreover,
\begin{equation}\label{ABrankone}
\displaystyle{A\left(  x\right)  :=\lim_{\varepsilon\rightarrow0^{+}}\frac{D^{c}u\left(
B\left(  x,\varepsilon\right)  \right)  }{\left\vert D^{c}u\right\vert \left(
B\left(  x,\varepsilon\right)  \right)  },\text{~\ \ }\lim_{\varepsilon
\rightarrow0^{+}}\frac{D^{c}U\left(  B\left(  x,\varepsilon\right)  \right)
}{\left\vert D^{c}U\right\vert \left(  B\left(  x,\varepsilon\right)  \right)
}=:D\left(  x\right)}
\end{equation}
exist and they are rank-one matrices of norm 1, in particular
\begin{equation}\label{as3.27BFMglobal}
\displaystyle{A(x)= a_u(x) \otimes \nu_u(x)},
\end{equation} 
where $(a_u(x),\nu_u(x)) \in \mathbb R^d \times S^{N-1}$.
By Theorem \ref{thm2.8FM2} we have
\[
\lim_{\varepsilon\rightarrow0^{+}}\frac{1}{\left\vert D^{c}u\right\vert
\left(  B\left(  x_{0},\varepsilon\right)  \right)  }\int_{B\left(
x_{0},\varepsilon\right)  }f^{\infty}\left(  v\left(  x_{0}\right)
,A\left(  x\right)  \right)  d\left\vert D^{c}u\right\vert =f^{\infty}\left(
v\left(  x_{0}\right)  ,A\left(  x_{0}\right)  \right)  .
\]

We recall as in Step 1, that 
%the assumptions imposed on $f$ allows us to apply for  every $v \in SBV_0(\Omega;\mathbb R^m)$ 
via the Global Method (cf. \cite[Theorem 4.1.4]{BFMGlobal}) we can obtain an integral representation for the functional
%$G(u;A):= \int_A Qf(v(x),\nabla u(x))dx$, thus obtaining an integral representation for the relaxed functional
 ${\cal G}(u; A)$ in \eqref{auxrelax} for every $ (v,u)\in BV(\Omega;\mathbb R^{m+d})$.
Moreover by  Proposition \ref{propqcx}, we can replace $f$ by $Qf$ in \eqref{calFG} and \eqref{FestG} holds. 
%the growth condition $(G_2)$, the lower semicontinuity with respect to the $L^1$-topology of the functional $v \in SBV_0(\Omega;%\mathbb R^m)\mapsto ( (1+[v]){\cal H}^{N-1}\lfloor{J_v \cap A}$ entails
%$$
%{\cal F}(v,u;A)\leq {\cal G}(u,A)+ (1+[v]){\cal H}^{N-1}\lfloor{J_v \cap A}, 
%$$

Differentiating with respect to $|D^c u|$ at $x_0$ and exploiting 
\eqref{CantorU} and \eqref{cantor} we deduce
$$
\displaystyle{\frac{d{\cal F}((v,u);\cdot)}{d |D^c u|}(x_0) \leq h(x_0, a_u, \nu_u)},
$$
where $\nu_u(x)$ agrees with the unit vector that, together with $a_u$, satisfies \eqref{as3.27BFMglobal} for $|D^c u|$-a.e. $x \in \Omega \setminus J_u$, and
where $h(x_0,a, \nu)$ is given as in \cite[formula (4.1.7)]{BFMGlobal}, namely
\begin{equation}\label{f0}
h(x_0,a,\nu):=\limsup_{k \to \infty}\limsup_{\varepsilon \to 0^+} \inf_{\begin{array}{ll}
z\in W^{1,1}(Q^{(k)}_\nu;\mathbb R^d)\\
z(y)= a(\nu \cdot y) \hbox{ on }\partial Q^{(k)}_\nu
\end{array}}\left\{\frac{1}{k^{N-1}}\int_{Q^{(k)}_\nu} Qf^\infty(v(x_0+\varepsilon y), \nabla z(y))dy\right\},
\end{equation}
where $a \in \mathbb R^d$, $\nu \in S^{N-1}$, $Q_\nu^{(k)}:= R_\nu \left(\left(-\frac{k}{2},\frac{k}{2}\right)^{N-1}\times \left(-\frac{1}{2},\frac{1}{2}\right)\right),$
and $R_\nu$ is a rotation such that $R_\nu(e_N)=\nu$.

We also recall that by (iv) in Remark \ref{propfinfty}, $Q(f^\infty)= (Qf)^\infty= Qf^\infty$.

To conclude the proof it is enough to show that
$$
h(x_0, a,\nu) \leq Qf^\infty (v(x_0), a \otimes \nu).
$$

By Lemma \ref{Lemma0}
\begin{equation}\label{hCantor}
\begin{array}{ll}
\displaystyle{h(x_0, a, \nu) \leq \limsup_{k \to \infty}\inf_{\begin{array}{ll}
z\in W^{1,1}(Q^{(k)}_\nu;\mathbb R^d)\\
z(y)= a(\nu \cdot y) \hbox{ on }\partial Q^{(k)}_\nu
\end{array}} \left\{\limsup_{\varepsilon \to 0^+} \frac{1}{k^{N-1}}\int_{Q_ \nu^{(k)}} Q f^\infty(v(x_0+\varepsilon y), \nabla z(y))dy\right\}.}
\end{array}
\end{equation}
In order to compute $\displaystyle{\limsup_{\varepsilon \to 0^+} \frac{1}{k^{N-1}}\int_{Q_\nu^{(k)}}Q f^\infty(v(x_0+\varepsilon y), \nabla z(y))dy}$, we add and subtract inside the integral $ Qf^\infty(v(x_0), \nabla z(y))$. Then, as in Step 1, exploiting the fact that $x_0$ is a Lebesgue point for $v\in SBV_0(\Omega;\mathbb R^m)\cap L^\infty(\Omega;\mathbb R^m)$, and that $Qf^\infty$ satisfies $(F_3)$ (see  Remark \ref{propfinfty} where $(F_3)$ has been deduced for $f^\infty$ and Proposition \ref{continuityQfbar}), via Lebesgue dominated convergence theorem, we can conclude that
$$
\displaystyle{\limsup_{\varepsilon \to 0^+}\frac{1}{k^{N-1}}\int_{Q_\nu^{(k)}} Qf^\infty((v(x_0+\varepsilon y), \nabla z(y)) dy)= \frac{1}{k^{N-1}} \int_{Q_\nu^{(k)}}Qf^\infty(v(x_0), \nabla z(y))dy.}
$$
Finally the quasiconvexity of $Qf^\infty$ (deduced via Remark \ref{propfinfty} and Proposition \ref{continuityQfbar}) provides
$$
\displaystyle{ Qf^\infty (v(x_0), a \otimes \nu) = \inf_{\begin{array}{ll}
z\in W^{1,1}(Q^{(k)}_\nu;\mathbb R^d)\\
z(y)= a(\nu \cdot y) \hbox{ on }\partial Q^{(k)}_\nu
\end{array}} \left\{\frac{1}{k^{N-1}}\int_{Q_ \nu^{(k)}} Q f^\infty(v(x_0), \nabla z(y))dy\right\}},
$$ 
which, together with \eqref{hCantor} concludes the proof of the upper bound for the Cantor part when $(v,u) \in (SBV_0(\Omega;\mathbb R^m)\times BV(\Omega;\mathbb R^d))\cap L^\infty(\Omega;\mathbb R^{m+d})$.

\noindent\textbf{Step 3.} We prove the upper bound for the jump. Namely, we
claim that
\begin{equation}\label{claimjump}
\mathcal{F}\left(  U;J_U  \right)\equiv{\cal F}(v,u, J_{(v,u)})  \leq\int_{J_ U}K_3\left(  v^+,v^-,u^+,u^-,\nu\right)  d\mathcal{H}^{N-1}%
\end{equation}
for every $U\equiv \left( v,u\right)  \in \left( SBV_0\left(  \Omega;\mathbb R^m\right)\times  BV\left(  \Omega;\mathbb{R}^{d}\right) \right)  \cap L^{\infty}\left(\Omega;\mathbb{R}^{m+ d}\right)  $.

The proof is divided into three parts according to the assumptions on the limit function $U.$

\emph{Case 1- }$U\left(  x\right)  :=\left(  a,c\right)  \chi_{E}\left(
x\right)  +\left(  b,d\right)  \left(  1-\chi_{E}\left(  x\right)  \right)  $
with $P( E, \Omega)  <\infty.$

\emph{Case 2- }$U\left(  x\right)  :=\sum_{i=1}^{\infty}(a_{i},c_i)\chi_{E_{i}%
}\left(  x\right)  $ where $\left\{  E_{i}\right\}  _{i=1}^{\infty}$ forms a
partition of $\Omega$ into sets of finite perimeter and $(a_i,c_i) \in \mathbb R^m \times \mathbb R^d$.

\emph{Case 3- }$U\in (SBV_0(  \Omega;\mathbb R^m)\times  BV(
\Omega;\mathbb{R}^{d}))  \cap L^{\infty}\left(  \Omega;\mathbb{R}%
^{m+d}\right)  .$

{\it Case 1-} We start by proving that for every open set $A\subset\Omega$%
\[
\mathcal{F}\left(  U;A\right)\equiv{\cal F}(v,u;A)  \leq\int_{A}Qf\left(  v\left(  x\right)
,0\right)  dx+\int_{J_U  \cap A}K_3\left(  a,b,c,d,\nu%
\right)  d\mathcal{H}^{N-1}.
\]

\begin{enumerate}
\item[a)] Assume first that%
\[
v(x)  :=\left\{
\begin{array}
[c]{cc}%
a & \text{if }x\cdot\nu>0,\\
b   & \text{if }x\cdot\nu<0,
\end{array}
\right.  \text{ and }u\left(  x\right)  :=\left\{
\begin{array}
[c]{cc}%
c & \text{if }x\cdot\nu>0,\\
d  & \text{if }x\cdot\nu<0.
\end{array}
\right.
\]
We start with the case when $A=a+\lambda Q$ is an open cube with two faces
orthogonal to $\nu$,  for simplicity we also assume that $\nu =e_N$ and $Q_\nu$ will be denoted simply by $Q$. Our proof develops as in  \cite[Proposition 4.1 and Lemma 4.2]{FR}, cf. also \cite[Proposition 5.1]{BBBF}, thus we will present just the main steps.

Suppose first that $a=0$ and $\lambda =1$. 
By Proposition \ref{prop3.5BBBF} (cf. also Remark \ref{applicationofprop3.4}), there exists $(v_n,u_n) \in {\cal A}_3(a,b,c,d,\nu)$ such that $(v_n,u_n)\to (v,u)$ in $L^1(Q;\mathbb R^{m+d})$ and
\begin{equation}\label{5.9BBBF}
\displaystyle{K_3(a,b,c,d,\nu) =\lim_{n\to \infty}\left(\int_Q Qf^\infty(v_n(x), \nabla u_n(x))dx + \int_{J_{v_n}\cap Q} g(v_n^+(x), v_n^-(x), \nu_n(x))d {\cal H}^{N-1}\right).}
\end{equation}

We denote by $Q'$ the set $\left\{  x\in Q:x_{N}=0\right\}  .$ For
$k\in\mathbb{N}$ we label the elements of $\left(  \mathbb{Z\cap}\left[
-k,k\right]  \right)  ^{N-1}\times\left\{  0\right\}  $ by $\left\{
a_{i}\right\}  _{i=1}^{\left(  2k+1\right)  ^{N-1}}$ and we observe that%
\[
\left(  2k+1\right)  \overline{Q'}=%
%TCIMACRO{\dbigcup \limits_{i=1}^{\left(  2k+1\right)  ^{N-1}}}%
%BeginExpansion
{\displaystyle\bigcup\limits_{i=1}^{\left(  2k+1\right)  ^{N-1}}}
%EndExpansion
\left(  a_{i}+\overline{Q'}\right)
\]
with
\[
\left(  a_{i}+Q'\right)  \cap\left(  a_{j}+Q'\right)  =\emptyset\text{ for
}i\neq j.
\]
%Extending $\xi\left(  \cdot,x_{N}\right)  $ and $\zeta\left(  \cdot
%,x_{N}\right)  $ by periodicity to $\mathbb{R}^{N-1}$ 

We define%
\[
z_{n,k}\left(  x\right)  :=\left\{
\begin{array}
[c]{lll}%
a &  & \text{if }x_{N}>\frac{1}{2\left(  2k+1\right)  },\\
v_n\left(  \left(  2k+1\right)  x\right)   &  & \text{if }\left\vert
x_{N}\right\vert <\frac{1}{2\left(  2k+1\right)  },\\
b &  & \text{if }x_{N}<-\frac{1}{2\left(  2k+1\right)  }.
\end{array}
\right.
\]
and%
\[
w_{n,k}\left(  x\right)  :=\left\{
\begin{array}
[c]{lll}%
c &  & \text{if }x_{N}>\frac{1}{2\left(  2k+1\right)  },\\
u_n\left(  \left(  2k+1\right)  x\right)   &  & \text{if }\left\vert
x_{N}\right\vert <\frac{1}{2\left(  2k+1\right)  },\\
d &  & \text{if }x_{N}<-\frac{1}{2\left(  2k+1\right)  }.
\end{array}
\right.
\]
By the periodicity of the functions $v_n$ and $u_n$, it is easily seen that 
$$
\displaystyle{\lim_{n \to \infty}\lim_{k \to \infty}\|z_{n,k}-v\|_{L^{1}\left(
Q;\mathbb R^m\right) }=0 }, \;\;\;\;\;\;\;\;\;\;\displaystyle{\lim_{n \to \infty}\lim_{k \to \infty}\|w_{n, k}- u \|_{L^{1}\left(
Q;\mathbb{R}^{d}\right) }=0 .}
$$
\noindent Thus, by a standard diagonalization argument, we have
$$
\begin{array} {ll}
\displaystyle{{\cal F}(v,u; Q)\leq \limsup_{n\to \infty}\limsup_{k\to \infty}\left(\int_Q Qf (z_{n,k}(x),\nabla w_{n,k}(x))dx
%\right.}\\
%\\
%\displaystyle{\left. 
+\int_{Q\cap J_{z_{n,k}}} g(z_{n,k}^+(x), z_{n,k}^-(x), \nu_{n,k}(x))d {\cal H}^{N-1}\right).}
\end{array}
$$

%Since $$
%	\begin{array}{ll}
%\displaystyle{\int_Q Qf(z_{n,k}(x), \nabla w_{n,k}(x))dx=\int_{-\frac{1}{2}}^{\frac{-1}{4k+2}} \int_{Q'}Qf(b, 0)dx %+\int^{\frac{1}{2}}_{\frac{1}{4k+2}} \int_{Q'}Qf(a,0)dx }\\
%\\
%\displaystyle{+ \int_{\frac{-1}{4k+2}}^{\frac{1}{4k+2}} \int_{Q'}Qf (v_n((2k+1)x), (2k+1)\nabla u_n((2k+1)x) ) dx, }
%\end{array}
%$$
%again, by the periodicity of $(v_n, u_n)$ and $(F_4)$

Arguing as in \cite[Proposition 5.1]{BBBF} for the bulk part we have
%$$
%\begin{array}{ll}
%\displaystyle{\int_{\frac{-1}{4k+2}}^{\frac{1}{4k+2}} \int_{Q'}Qf (v_n((2k+1)x), (2k+1)\nabla %u_n((2k+1)x) ) dx=}\\
%\\
%\displaystyle{\left(\frac{1}{2k+1}\right)^N\int_{-\frac{1}{2}}^{\frac{1}{2}} \int_{(2k+1) Q'} %Qf(v_n(y), (2k+1)\nabla u_n(y))d y=}\\
%\\
%\displaystyle{\frac{1}{2k+1} \int_Q Qf(v_n(y), (2k+1) \nabla u_n(y))dy .}
%\end{array}
%$$
% by $(F_4)$
$$
\begin{array}{ll}
\displaystyle{\limsup_{k \to \infty}\int_Q Qf(z_{n,k}(x), \nabla w_{n,k}(x))dx = \int_Q Qf(v(y), 0)dy+\int_Q Qf^\infty (v_n(y),\nabla u_n(y))dy,}
\end{array}
$$
and for the surface term
$$
\begin{array}{ll}
\displaystyle{\int_{Q \cap J_{z_{n,k}}} g(z_{n,k}^+(x), z_{n,k}^-(x), \nu_{n,k}(x))d {\cal H}^{N-1}}
%\\
%\displaystyle{\leq \int_{\{x \in Q: |x_N|<\frac{1}{4k+2}\} \cap \frac{1}{2k+1}J_{v_n}} g(v_n^+((2k+1)x), v_n^-((2k+1)x), %\nu_n((2k+1)x))d {\cal H}^{N-1}}\\
%\\
%\displaystyle{=\left(\frac{1}{2k +1}\right)^{N-1} \int_{\{x \in (2k+1)Q: |x_N|<\frac{1}{2}\} \cap J_{v_n}} g(v_n^+(y), %v_n^-(y), \nu_n(y))d {\cal H}^{N-1}(y)}\\
\displaystyle{\leq \int_{Q \cap J_{v_n}} g(v_n^+(y), v_n^-(y), \nu_n(y))d {\cal H}^{N-1}(y).}
\end{array}
$$
Putting together the estimates for bulk and surface terms and exploiting \eqref{5.9BBBF} we obtain that
$$
\begin{array}{ll}
\displaystyle{{\cal F}(v,u; Q) \leq \limsup_{n\to \infty} \left(\int_Q Qf(v, 0)dx + \int_Q Qf^\infty(v_n(y), \nabla u_n(y)) dy \right.}\\
\\
\displaystyle{\left.+\int_{Q \cap J_{v_n}} g(v_n^+(y), v_n^-(y), \nu_n(y))d {\cal H}^{N-1} \right) = \int_Q Qf(v(x), 0)dx  +K_3(a,b,c,d, e_N)}\\
\\
\displaystyle{=\frac{Qf(a,0)+ Qf(b,0)}{2}+ K_3(a,b,c,d,e_N).}
\end{array}
$$

In order to consider sets $A= a + \lambda Q$ with $a \in \mathbb R^N$  and $\lambda>0$ we define
$$
\displaystyle{(Qf)_\lambda(b,B):= Qf\left(b, \frac{B}{\lambda}\right), \;\;\;\ g_\lambda(\xi, \zeta, \nu):=\frac{1}{\lambda}g(\xi, \zeta ,\nu) }
$$
and for every $E \subset \Omega$,
$$
\begin{array}{ll}
\displaystyle{{\cal F}_\lambda(v,u; E):= \inf_{\{(v_n, u_n)\}} \left\{ \liminf_{n \to \infty} \left(\int_E (Qf)_\lambda(v_n(x), \nabla u_n(x) )dx + \int_{E \cap J_{v_n}} g_\lambda(v_n^+(x), v_n^-(x), \nu_n(x)) d {\cal H}^{N-1} \right):\right.}
\\
\\
\displaystyle{ (v_n, u_n) \in SBV_0(E;\mathbb R^m) \times W^{1,1}(E;\mathbb R^d), (v_n, u_n)\to (v,u) \hbox{ in }L^1(E;\mathbb R^{m+d})\Big\}.}\end{array}
$$

It is easily seen that for every $(v,u) \in L^1(\Omega ;\mathbb R^{m+d})$, we have
$$
\displaystyle{{\cal F}(v,u; A)= \lambda^N {\cal F}_\lambda (v_\lambda, u_\lambda; Q)},
$$

where
$$
\displaystyle{v_\lambda(x):=v\left(\frac{x-a}{\lambda}\right)}, \;\;
\displaystyle{u_\lambda(x):=u\left(\frac{x-a}{\lambda}\right)}.
$$
Since $Qf^\infty_\lambda = \frac{1}{\lambda}Qf^\infty$, by the definition of $K_3$ for $f_\lambda$ and $g_\lambda$ we have that
$(K_3)_\lambda(a,b,c,d,\nu)= \frac{1}{\lambda} K_3(a,b,c,d,\nu). $

By the definition of $u_\lambda$ and $v_\lambda$  we have that 
$$
v_\lambda= \left\{
\begin{array}{ll}
a \hbox{ if }x_N >0 ,\\
b \hbox{ if }x_N <0,
\end{array}
\right.\;\;\;\;\;\; u_\lambda= \left\{
\begin{array}{ll}
c \hbox{ if }x_N >0 ,\\
d \hbox{ if }x_N <0.
\end{array}
\right.
$$

So by the previous case it results that

$$
%\begin{array}{cc}
\displaystyle{{\cal F}(v,u; A)\lambda^N ={\cal F}_\lambda (v_\lambda, u_\lambda; Q) \leq \lambda^N \left( \displaystyle{\frac{Qf_\lambda(a,0)+ Qf_\lambda(b,0)}{2}+ (K_3)_\lambda(a,b,c,d,e_N)}\right).}
%= \lambda^N \left(\frac{Qf(a,0)+ Qf(b,0)}{2}\right)+ \lambda^{N-1}K_3(a,b,c,d,e_N).}
%\end{array} 
$$
%We
%know that there exist $\left\{  u_{n}\right\}  \subset W^{1,1}\left(
%A;\mathbb{R}^{d}\right)  ,$ $\left\{  \chi_{n}\right\}  \subset BV\left(
%A;\left\{  0,1\right\}  \right)  $ such that
%\[
%\int_{A}QV\left(  \chi_{n},\nabla u_{n}\right)  dx+\left\vert D\chi
%_{n}\right\vert \left(  A\right)  \rightarrow\int_{A}QV\left(  \chi\left(
%x\right)  ,0\right)  dx+\int_{J\left(  U\right)  \cap A}K_{2}\left(
%a,b,c,d,\nu\right)  d\mathcal{H}^{N-1}%
%\]
%and so $\left(  \ref{5.20FM}\right)  $ holds.

\item[b)] Now let $U:=(v,u)$ as in $a)$ and let $A$ be any open set. 
The proof of this step is identical to \cite[Section 5. Step 3, case 1., b)]{FM2}. Indeed it is enough to apply the same strategy replacing $u$ and $K$ in \cite{FM2} by $U $ and $K_3$ respectively herein, obtaining

\begin{equation}\label{formulasurface}
\displaystyle{\mathcal{F}\left( v,u;A\right)\leq \int_{A}Qf\left( v\left(  x\right)  ,0\right)  dx+\int_{J_U  \cap A}K_3\left(  a,b,c,d,\nu\right)  d\mathcal{H}^{N-1}.}
\end{equation}

\item[c)] Now suppose that $U$ has a polygonal interface, i.e.
$ U=\left(  a,c\right)  \chi_{E}+\left(  b,d\right)  \left(  1-\chi_{E}\right)$, $E$ is a polyhedral set, i.e., $E$ is a
bounded strongly Lipschitz domain and $\partial E=H_{1}\cup H_{2}\cup\dots\cup
H_{M}$ are closed subsets of hyperplanes of type $\left\{  x\in\mathbb{R}%
^{N}:x\cdot\nu_{i}=\alpha_{i}\right\}  .$ 

The details of the proof are omitted since they are very similar to  \cite[Section 5, Step 3, case 1, c)]{FM2} . We just observe that, given an open set $A $ contained in $\Omega$, the argument relies on an inductive procedure on  $I:=\left\{  i\in\left\{  1,\dots,M\right\}  :\mathcal{H}^{N-1}\left(  H_{i}\cap A\right)  >0\right\}$, starting from the case $I=0$, when  $u\in W^{1,1}\left(
A;\mathbb{R}^{d}\right)  $ and $v\in SBV_0(A;\mathbb R^m)\cap L^\infty(A;\mathbb R^m)$, for which it suffices to consider $u_{n}=u$ and $v_{n}=v$ with \eqref{formulasurface} reducing to
\[
\mathcal{F}\left( v,u;A\right)  \leq\int_{A}Qf\left(v\left(
x\right)  ,0\right)  dx.
\]

%Let $A$ be an open set contained in
%$\Omega$ and let $I=\left\{  i\in\left\{  1,\dots,M\right\}  :\mathcal{H}%
%^{N-1}\left(  H_{i}\cap A\right)  >0\right\}  .$ If $A\cap J\left(  U\right)
%=\emptyset,~$i.e., if $\operatorname*{card}I=0$ then $u\in W^{1,1}\left(
%A;\mathbb{R}^{d}\right)  $ and $\chi\in W^{1,1}\left(  A;\mathbb{R}%
%^{d}\right)  $ and it suffices to consider $u_{n}=u\in W^{1,1}\left(
%A;\mathbb{R}^{d}\right)  $ and $v_{n}=v\in SBV_0\left(  A;\mathbb R^M  \right) \cap %L^\infty(A;\mathbb R^m) $ with \eqref{formulasurface} reducing to
%\[
%\mathcal{F}\left( v,u;A\right)  \leq\int_{A}Qf\left(v\left(
%x\right)  ,0\right)  dx.
%\]
The case $\operatorname*{card}I=1$ was studied in part $b)$ where $E$ is a
large cube so that $J_U \cap\Omega$ reduces to the flat
interface $\left\{  x\in\Omega:x\cdot\nu=0\right\}  .$ 

Then the induction step, which first assumes that \eqref{formulasurface} is true if
$\operatorname*{card}I=k,~k\leq M-1$ and then proves that it is still true if
$\operatorname*{card}I=k,$ 
develops exactly as in \cite[Proposition 5.1, Step 2, c)]{BBBF}, the only difference being that the slicing method used to connect the sequence across the interfaces relies on the same techniques as Lemma \ref{Lemma4.1FM}, referred to more general open sets  than cubes (cf. also \cite[Section 5, Step 3, case 1, c]{FM2}). 
Thus one can conclude that
$$
\begin{array}{ll}
{\cal F}(v,u; A)\leq 
%\displaystyle{\limsup_{n \to \infty}\left[\int_A Qf(\zeta_n(x), \nabla \xi_n(x))dx + \int_{A \cap %J_{\zeta_n}} g(\zeta_n^+, \zeta_n^-, \nu_{\zeta_n})d {\cal H}^{N-1}\right]}\\
%\\
%\displaystyle{\limsup_{n \to \infty}\left[\int_{A_1} Qf(\tilde{v}_n(x), \nabla \tilde{u}_n(x))dx %+ \int_{A_1 \cap J_{\tilde{v}_n}} g(\tilde{v}_n^+, \tilde{v}_n^-, \nu_{\tilde{v}_n})d {\cal %H}^{N-1}\right]}\\
%\\
%\displaystyle{+\limsup_{n \to \infty}\left[\int_{A_2} Qf(\hat{v}_n(x), \nabla \hat{u}_n(x))dx %+ \int_{A_2 \cap J_{\hat{v}_n}} g(\hat{v}_n^+, \hat{v}_n^-, \nu_{\hat{v}_n})d {\cal %H}^{N-1}\right]}\\
%\\
\displaystyle{ \int_A Qf(v(x), 0)dx + \int_{J_U \cap A} K_3(a,b,c,d,\nu)d {\cal H}^{N-1}.}
% + \delta + C|A \cap U_\delta|+ (|Dv|+ {\cal H}^{N-1})(U_\delta \cap A \cap J_{(v,u)});}
\end{array}
$$
%so letting $\delta \to 0$ , we conclude this step.

\item[d)] If $E$ is an arbitrary set of finite perimeter, the step develops in strong analogy with \cite [Section 5, Step 3, case 1, f)]{FM2}. 
Essentially, exploiting Proposition \ref{propK3} (b) and the approximation via polyhedral sets with finite perimeter as in \cite[Lemma 3.1]{B}, and application of  Lebesgue's monotone convergence theorem gives 

%We start by observing that if $E$ is a set of finite perimeter, then \cite[Lemma 3.1]{B} ensures %the existence of a sequence of polyhedral sets $E_n$ such that $\chi_{E_n}\to \chi_{E}$ in %$L^1(\Omega)$ and $|D\chi_{E_n}(\Omega)| \to |D \chi_E|(\Omega)$. 
%Next,  Proposition \ref{propK3} (b)) guarantees that there exists a sequence of continuous %functions ${K_3}_m:  \mathbb R^{N} \to [0, + \infty)$ such that
%$K_3(a,b,c,d, y) \leq {K_3}_n(y) \leq C|y|, \hbox{ for all }y \in \mathbb R^N, $   and
%$K_3(a,b,c,d,y) ={\rm inf}_m {K_3}_m(y),$ where  we extended $K_3(a,b,c,d, \cdot)$ as a %homogeneous function of degree one.

%Setting $U_n=(v_n, u_n):= (a,c) \chi_{E_n}+ (b,d)\chi_{E_n}$, by the previous steps we have
%$$
%\begin{array}{ll}
%\displaystyle{{\cal F}(v,u, A) \leq \liminf_{n \to \infty} {\cal F}(v_n, u_n, A)\leq}\\
%\\
%\displaystyle{\liminf_{n \to \infty}\left[\int_{A} Qf(v_n(x), 0)dx + \int_{J_{v_n}\cap A} %K_3(a,b,c,d,\nu)d {\cal H}^{N-1}\right]}\\
%\\
%\displaystyle{\leq \int_{A }Qf(v(x), 0)dx + \lim_{n \to \infty} \int_{J_{v_n}\cap %A}%{K_3}_m(\nu(x))d{\cal H}^{N-1}}\\
%\\
%\displaystyle{\int_A Qf(v(x), 0)dx + \int_{J_v \cap A}{K_3}_m (\nu)d{\cal H}^{N-1}}
%\end{array}
%$$
%where we have used the fact that $|E_n \Delta E| \to 0$ and $|D \chi_{E_n}| \to |D \chi_E|$

%Letting $m \to \infty$, and using Lebesgue's Monotone convergence theorem, we obtain
$$
\displaystyle{{\cal F}(v,u; A) \leq \int_A Qf(v(x), 0)dx + \int_{A \cap J_U}K_3(a,b,c,d,\nu)d{\cal H}^{N-1}},
$$
This last inequality, together with Lemma \ref{measure}, yields
$$
%\begin{array}{ll}
\displaystyle{{\cal F}(v,u; J_{(v,u)}) \leq 
%\inf \left\{{\cal F}(v,u; A): A \subset \Omega, A \hbox{ is open }, J_{(v,u)} \subset A\right\}}\\
%\\
%\displaystyle{\leq \inf \left\{ \int_A Qf(v(x), 0)dx + \int_{A \cap J_{(v,u)}}K_3(a,b,c,d, \nu)d {\cal H}^{N-1}: A \subset \Omega, A %\hbox{ open}, J_{(v,u)} \subset A\right\}}
%\\
\int_{J_{(v,u)}} K_3(a,b,c,d, \nu)d {\cal H}^{N-1}}
%\end{array},
$$
\noindent which gives \eqref{claimjump} when $U\equiv (v,u)=(a,c)\chi_E + (b,d)(1-\chi_E)$ is the characteristic function of a set of finite perimeter.
\end{enumerate}

{\it Case 2-} Arguing as in \cite [Section 5, Step 3, case 2]{FM2}, we refer to  \cite[Proposition 4.8, Step 1]{AMT}, and clearly we obtain for every $(v,u)\in BV(\Omega; T) \times BV(\Omega;T)$, with $T$ a finite subset of $\mathbb R^d$ %(i.e. $(v,u) $ can be written as a finite sum of constants time characteristic functions of mutually disjoint sets of finite perimeter in $\Omega$), that 
$$
\displaystyle{{\cal F}(v,u; A)= {\cal F}(v,u; A \cap J_{(v,u)}) \leq \int_{J_{(v,u)}}K_3(v^+, v^-, u^+, u^-, \nu_{v,u}(x))d {\cal H}^{N-1}(x).}
$$

{\it Case 3-} For $U\equiv(v,u)\in (SBV_0(\Omega;\mathbb R^m) \times BV(\Omega;\mathbb R^d) )\cap L^\infty(\Omega;\mathbb R^{m+d})$, the proof develops analogously to \cite[Proposition 4.8, Step 2]{AMT} and we add some details for the reader's convenience.
% taking into account that the approximation for $u$ can be identical to that defined in \cite[Proposition 4.8, Step 2]{AMT}, while we %approximate $v$ exploiting Lemma \ref{lemma31BCD},  i.e. in terms of the partial sums of the Caccioppoli partition itself, say $v= %\sum_{h=1}^\infty v_h$. Moreover observe that the approximation of Lemma \ref{lemma31BCD} provides also %$J_{v_h}\subseteq J_v$.

First we observe that the jump set $J_U \equiv J_{(v,u)}$ can be decomposed as $(J_{u} \setminus J_v) \cup (J_v \setminus J_u) \cup (J_u\cap J_v) $, recalling that these sets are mutually disjoint and the tangent hyperplanes to $J_u$ and $J_v$ coincide up to a set of ${\cal H}^{N-1}$- measure $0$.

%Then we can observe that in 
%\begin{itemize}
%\item i) $J_u \setminus J_v$  one can invoke exactly the same argument introduced in \cite[Proposition 4.8, Step 2]{AMT} for $u$, %fixing $v$, in fact the lower semicontinuity of ${\cal F}$ guarantees in particular that ${\cal F}(u,v, A)\leq \liminf_h {\cal F}(u_h, v, %A)$ for every sequence $u_h \to u$ in $L^1$;

%{\bf without loss of generality we can assume th%at there exists an open set $A$ such that $A \cap J_v = \emptyset$ and $A %\supset J_u \setminus J_v$.}

% $J_u \cap J_v$. First we may assume, by virtue of $[(i)]$ and $[(ii)]$  that the jump set of $(v,u)$ is of the form $J_u \cap J_v$. Moreover since the tangent hyperplanes to $J_u$ and $J_v$ coincide ${\cal H}^{N-1}$ a.e. in $J_u \cap J_v$, without loss of generality we may consider $J_u= J_v$.  Then, in order to obtain an upper bound for ${\cal F}(v,u; J_u \cap J_v)$ we can take any open set $A$, such that $A \supset J_u \cap J_v$. The lower semicontinuity of ${\cal F}(\cdot, \cdot; A)$ entails that
%$$
%\displaystyle{{\cal F}(v,u;A) \leq \liminf_{h \to \infty} {\cal F}(v_h, u_h; A)}
%$$ 
%for every sequence $(v_h, u_h) \to (v,u)$ in $L^1(A; \mathbb R^{m +d})$. 
 
Let $A \in {\cal A}(\Omega)$, such that $A \supset J_U$, we assume $U(x) \in [0,1]^{m+d}$  for a.e. $x \in A$. 
For every $h \in \mathbb N$, $h \geq 2$, it is possible to define a set $B_h:= A \setminus J_U \cup \{ x \in J_U : |U^+(x)-U^-(x)| \leq  \frac{1}{4(m+d)h}\}$, and define the sequence $\{U_h\}\equiv \{(v_h, u_h)\}$ according to \cite[Proposition 4.8, Step 2]{AMT}. Observe that $J_{v_h}\subset J_v$.
%the construction proposed by Ambrosio-Mortola-Tortorelli, ensures in particular that, }
%We can  construct the sequence $\{v_h\}$ as the Caccioppoli partion of $v$ given by Lemma \ref{lemma31BCD}.
Then, by Step 2, we have that
\begin{equation}\label{Cantor0}
\begin{array}{ll}
\displaystyle{{\cal F}(v,u,;A )\leq \liminf_{h \to \infty} {\cal F}(v_h, u_h;A) }
%\\
\displaystyle{=\liminf_{h \to \infty} \left(\int_A Qf(v_h, 0)dx + \int_A Qf^\infty \left(v_h, \frac{d D^c u_h}{d |D^c u_h|}\right) d |D^c u_h| \right.}\\
\\
\;\;\;\;\;\;\;\;\;\;\;\;\,\,\,\;\;\;\;\;\;\;\,\;\;\;\;\;\;\;\;\;\;\;\;\;\;\;\,\,\,\;\;\;\;\;\;\;\,\;\;\;\displaystyle{\left. +\int_ {A\cap (J_{u_h}\cup J_{v_h})} K_3(v_h^+, v_h^-, u_h^+, u_h^-, \nu_{v_h, u_h})d {\cal H}^{N-1} \right).}
\end{array}
\end{equation}
We restrict our attention to the surface integral.
% sincthe Cantor  term is $0$ and the  bulk integral will vanish when taking the infimum over all the open sets $A\supset (J_u \cap %J_v)$.
Clearly,
$$
\begin{array}{ll}
\displaystyle{\int_ {A\cap (J_{u_h}\cup J_{v_h})} K_3(v_h^+, v_h^-, u_h^+, u_h^-, \nu_{v_h, u_h})d {\cal H}^{N-1}= \int_ {A\cap (J_{u_h}\cup J_{v_h})\cap B_h} K_3(v_h^+, v_h^-, u_h^+, u_h^-, \nu_{v_h, u_h})d {\cal H}^{N-1}}\\
\\
\displaystyle{+ \int_ {A\cap (J_{u_h}\cup J_{v_h})\cap (A \setminus B_h)} K_3(v_h^+, v_h^-, u_h^+, u_h^-, \nu_{v_h, u_h})d {\cal H}^{N-1}.}
\end{array}
$$
%Again as in \cite[Proposition 4.8, step 2]{AMT} we can conclude that
%$$
%\displaystyle{ \int_ {A\cap (J_{u_h}\cup J_{v_h})\cap (A \setminus B_h)} K_3(v_h^+, v_h^-, u_h^+, u_h^-, \nu_{v_h, u_h})d %{\cal H}^{N-1}\leq  \int_ {A\cap J_u\cap J_v} K_3(v^+, v^-, u^+, u^-, \nu_{v, u})d {\cal H}^{N-1}.}
%$$

By the decomposition of the jump set $J_{(v_h, u_h)}$,  Proposition \ref{propK3} d), the fact that $J_{v_h}\subset J_v$, the same type of estimates as in \cite[page 300]{AMT}, entail (with the constant $C$ varying from place to place)
%the properties of $\{v_h\}$ and $\{u_h\}$, 
%which, in turn, guarantee that the total variation of $v_h$ is controlled by the total variation of $v$, and $J_{v_h}\subset J_v$, 
\begin{equation}\label{stimeuhvh}
\begin{array}{ll}
 \displaystyle{\int_ {A\cap (J_{u_h}\cup J_{v_h})\cap  B_h} K_3(v_h^+, v_h^-, u_h^+, u_h^-, \nu_{v_h, u_h})d {\cal H}^{N-1}= \int_ {A\cap (J_{u_h}\setminus J_{v_h})\cap B_h} K_3(v_h^+, v_h^-, u_h^+, u_h^-, \nu_{v_h, u_h})d {\cal H}^{N-1}}\\
\\
\displaystyle{ +\int_ {A\cap (J_{v_h}\setminus J_{u_h})\cap B_h} K_3(v_h^+, v_h^-, u_h^+, u_h^-, \nu_{v_h, u_h})d {\cal H}^{N-1}+  \int_ {A\cap J_{u_h}\cap J_{v_h} \cap  B_h} K_3(v_h^+, v_h^-, u_h^+, u_h^-, \nu_{v_h, u_h})d {\cal H}^{N-1}}\\
\\
\displaystyle{\leq C \int_ {A\cap (J_{u_h}\setminus J_{v_h})\cap  B_h} |u_h^+-u_h^-|d {\cal H}^{N-1}+ C \int_ {A\cap (J_{v_h}\setminus J_{u_h})\cap B_h} (|v_h^+- v_h^-| +1) d {\cal H}^{N-1}}\\
\\
\displaystyle{+ C  \int_ {A\cap J_{u_h}\cap J_{v_h}\cap B_h} \left( |v_h^+- v_h^-| + | u_h^+- u_h^-| +1 \right)d {\cal H}^{N-1}}\\
\\
\displaystyle{\leq 2C (m+d) |Du| (A \cap B_h)+ C (m+d)|Dv|(A \cap B_h) + C{\cal H}^{N-1}(J_v \cap B_h\cap A),}
\end{array}
\end{equation}.
Moreover, by Proposition \ref{propK3} c), d) and reverse Fatou's lemma we have  
%( {\bf and again the fact that the $u_h$ constructed in \cite{AMT} guarantee that $J_{u_h} \subset J_u$}) entail that 
$$
\displaystyle{ \int_ {(J_{v_h}\cup J_{u_h})\cap (A \setminus B_h)} K_3(v_h^+, v_h^-, u_h^+, u_h^-, \nu_{(v_h, u_h)})d {\cal H}^{N-1}\leq  \int_ {A\cap (J_v\cup J_u)} K_3(v^+, v^-, u^+, u^-, \nu_{(v, u)})d {\cal H}^{N-1}.}
$$

Clearly, taking the limit as $h \to \infty$,  from the above inequality and \eqref{stimeuhvh} we may conclude that,
$$
\begin{array}{ll}
\displaystyle{{\cal F}(v,u;A) \leq \int_{A \cap (J_v \cup J_u)}K_3(v^+, v^-, u^+, u^-, \nu_{(v,u)})d{\cal H}^{N-1} }
\\
\\\displaystyle{+C (|Du| (A \setminus (J_v \cup J_u))+ |Dv|(A \setminus (J_u \cup J_v)) +\int_{A} Qf(v, 0)dx, }
\end{array}
$$
where we have exploited the fact that the Cantor term in \eqref{Cantor0} is $0$, from the construction of the $u_h$, and 
$\displaystyle{\liminf_{h \to \infty}{\cal H}^{N-1}(J_v \cap B_h\cap A)= {\cal H}^{N-1}(J_v \cap (A \setminus (J_u \cup J_v))) =0}$.
Now, since ${\cal F}(v,u;\cdot )$ is a Radon measure, the above inequality holds for every Borel set $B$, in particular for the set $B= A \cap (J_v \cup J_u)$ and this gives
$$
\displaystyle{{\cal F}(v,u;J_v\cap J_u) \leq \int_{J_v\cap J_u} K_3(v^+, v^-, u^+, u^-, \nu_{(v,u)})d {\cal H}^{N-1}.}
$$
This concludes the proof of Step 2 when $(v,u) \in SBV_0(\Omega;\mathbb R^m)\times BV(\Omega;\mathbb R^d)\cap L^\infty(\Omega;\mathbb R^{m+d})$.

The general case $(v,u) \in SBV_0(\Omega;\mathbb R^m)\times BV(\Omega;\mathbb R^d)$ follows from (iii) in Remark  \ref{asinGlobalMethodBFLM}, (cf.\cite[Section 5, Step 4.]{FM2}  and \cite[Theorem 4.9]{AMT}). 

\end{proof}

\begin{proof}[Proof of Theorem \ref{mainthmgen}]
It follows from Theorems \ref{lsctheorem} and \ref{thupperbound} 
\end{proof}
\begin{remark}\label{specializeK3}We observe that, as it can be easily conjectured from the proof of Theorems \ref{lsctheorem}, Step 2, and \ref{thupperbound}, Step 3, Case 3. i) and ii), $K_3$ admits the following equivalent representation:
\begin{itemize}
\item[on $J_u\setminus J_v$] $K_3(a,a,c,d,\nu)=Qf^\infty(a,(c-d)\otimes \nu)$,  where $Q f^\infty$ represents the recession function of the quasiconvexification of $f$ as in Remark \ref{propfinfty}.
In fact one inequality is trivial by Definition \ref{K3}, while the other can be obtained through Proposition \ref{prop3.5BBBF},  invoking the quasiconvexity and the growth properties of $Qf^\infty (a, \cdot)$ (cf. Remark \ref{propfinfty}) and analogous arguments to the ones leading to \cite[formula (5.84)]{AFP2}.
 
\item[on $J_v \setminus J_u$] $K_3(a,b,c,c,\nu)= {\cal R}g(a,b,\nu)$ where ${\cal R}g$ represents the $BV$-elliptic envelope of $g$, namely the greatest $BV$-elliptic function less than or equal to $g$, which under the assumptions $(G_1)-(G_3)$ admits the representation 
\begin{equation}\label{Rg}
{\cal R}g(a, b,\nu) = \inf\left\{\int_{J_w\cap Q_\nu} g(w^+, w^-, \nu) d{\cal H}^{N-1} : w \in SBV_0(Q_\nu;\mathbb R^m)\cap  L^\infty(Q_\nu; \mathbb R^m), w = v_0  \hbox{ on }\partial Q_\nu\right\},
\end{equation}
 as in \cite{BDV}, \cite{CF}, \cite{BFLM},
where $v_0$ is defined as in \eqref{vab}. This is a consequence of \eqref{K3} and \eqref{Rg}.

\end{itemize}

We observe that the above characterizations of $K_3$ could be deduced directly reproducing the proof of lower bound and upper bound for Theorem \ref{mainthmgen}, for the jump part on the sets $J_u\setminus J_v$ and $J_v\setminus J_u$, respectively.
\end{remark}

\section{Applications}\label{appl}
This section is devoted to the proof of Theorem \ref{mainthm} which is very similar to that of Theorem \ref{mainthmgen}. In particular we replace Lemma \ref{Lemma4.1FM} and Proposition \ref{propK3} by Lemma \ref{Lemma4.1FMchi} and Proposition \ref{propK2}, respectively. Having in mind the application that we will describe in more details in Remark \ref{remmainthm} we state it with more generality, but  in order to prove Theorem \ref{mainthm} we will consider $m=1$ and $T=\{0,1\}$.

Let $T\subset \mathbb R^m$ be a finite set and let 
\begin{equation} \label{Vgfr}
V:T\times\mathbb{R}^{d\times N}\rightarrow(0,+\infty)
\hbox{ and }  g: T \times T \times S^{N-1} \to[0, +\infty[
\end{equation} 
satisfying $(F_1)$ - $(F_4)$ and $(G_1)$ - $(G_3)$, respectively, and denote by ${\cal A}_{fr}$ the set defined in \eqref{AFR}, where the range $\{0,1\}$ is replaced by $T$. 

%\begin{align}
%\mathcal{A}_{fr}\left(  a,b,c,d,\nu\right)   &  :=\left\{  \left( v,u\right)
%\in BV\left(  Q_{\nu};T \right)  \times W^{1,1}\left(
%Q_{\nu};\mathbb{R}^{d}\right)  :\right. \label{AFR}\\
%&  \left.  (v(y),u\left(  y\right))  =(a,c)\text{ if }y\cdot\nu=\frac{1}{2},~(v(y),u\left(
%y\right))  =(b,d)\text{ if }y\cdot\nu=-\frac{1}{2},\right.\nonumber\\
%&  \left.  (v, u)\text{ are
%}1-\text{periodic in }\nu_{1},\dots,\nu_{N-1} directions\right\}.\nonumber
%\end{align}

For simplicity we will consider $\nu=e_N$ and consequently $Q_\nu=Q= [0,1]^N$.
\begin{lemma}\label{Lemma4.1FMchi}
Let $T \subset \mathbb R^m$ a finite set, and%
\[
 v_0\left(  y\right)  :=\left\{
\begin{array}
[c]{lll}%
a &  & \text{if }x_{N}>0,\\
b &  & \text{if }x_{N} < 0, 
\end{array}
\right.\qquad u_{0}\left(  y\right)  :=\left\{
\begin{array}
[c]{lll}%
c &  & \text{if }x_{N}>0,\\
d &  & \text{if }x_{N}< 0.
\end{array}
\right. 
\]
Let $\left\{  v_{n}\right\}  \subset
BV\left(  \Omega;T  \right) $ and $\{u_{n}\} \subset W^{1,1}\left(
Q;\mathbb{R}^{d}\right)  $, such that $v_n \to v_0$ $L^{1}\left(
Q;\mathbb R^m  \right)  $ and $u_n\to u_0$  in $L^{1}\left(  Q;\mathbb{R}^{d}\right) .$   

If $\rho$ is a mollifier,
$\rho_{n}:=n^{N}\rho\left(  nx\right)  ,$ then there exists a sequence of
functions $\left\{  \left(  \zeta_{n},\xi_{n}\right)  \right\}  \in
\mathcal{A}_{fr}\left(  a,b,c,d,e_{N}\right)  $,
 such that
\[
\zeta_{n}=v_0\text{ on }\partial Q,~\zeta_{n}\rightarrow v_0\text{ in
}L^{1}\left(  Q;\mathbb R^m\right),
\]
\[
\xi_{n}=\rho_{i\left(  n\right)  }\ast u_{0}\text{ on }\partial Q,~~\ \ \xi
_{n}\rightarrow u_{0}\text{ in }L^{1}\left(  Q;\mathbb{R}^{d}\right) 
\]%
and%
\begin{equation}\label{eqlemma4.7}
\begin{array}{ll}
\displaystyle{\underset{n\rightarrow\infty}{\lim\sup}\left(  \int_{Q}QV\left(
\zeta_{n},\nabla\xi_{n}\right)  dx+\int_{J_{\zeta_n}\cap Q}g(\zeta_n^+, \zeta_n^-, \nu_{\zeta_n})d {\cal H}^{N-1}\right)}
\\
\\
\displaystyle{\leq \underset{n\rightarrow\infty}{\lim\inf}\left(  \int_{Q}QV\left( v_{n},\nabla u_{n}\right)  dx+\int_{J_{v_n}\cap Q}g(v_n^+, v_n^-, \nu_{v_n})d {\cal H}^{N-1}
\right),}
\end{array} 
\end{equation}
where $QV$ represents the quasiconvex envelope of $V$ as in \eqref{Qfbar}.
\end{lemma}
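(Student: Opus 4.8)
The plan is to mimic the proof of Lemma \ref{Lemma4.1FM} verbatim, since the only structural difference is that the target $SBV_0$ space has been replaced by $BV(\Omega;T)$ for a finite set $T$, and this replacement actually \emph{simplifies} matters: the truncation argument of Step 1 in Lemma \ref{Lemma4.1FM} is no longer needed because $\{v_n\}\subset BV(\Omega;T)$ is automatically uniformly bounded in $L^\infty$ (indeed pointwise $T$-valued), and all the surface-term estimates there were precisely the mechanism for reducing to bounded sequences. So I would first observe that, without loss of generality, the $\liminf$ on the right-hand side of \eqref{eqlemma4.7} is a limit and is finite, and then move directly to the slicing construction.

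Next I would reproduce Step 2 of Lemma \ref{Lemma4.1FM} with $({\overline v}_n,{\overline u}_n)$ simply replaced by $(v_n,u_n)$ and $Qf$ by $QV$. Concretely: set $w_n:=\rho_n\ast u_0$, which equals $c$ for $x_N>1/n$, equals $d$ for $x_N<-1/n$, is tangentially periodic, satisfies $\|\nabla w_n\|_\infty=O(n)$ and lies in ${\cal A}_1(c,d,e_N)$. Introduce $\alpha_n:=\sqrt{\|u_n-w_n\|_{L^1}+\|v_n-v_0\|_{L^1}}\to0^+$, a scale $k_n:=n[\,1+\|u_n\|_{W^{1,1}}+\|w_n\|_{W^{1,1}}+\|v_n\|_{BV}+\|v_0\|_{BV}+{\cal H}^{N-1}(J_{v_n})\,]$, $s_n:=\alpha_n/k_n$, nested cubes $Q_0:=(1-\alpha_n)Q$, $Q_i:=(1-\alpha_n+is_n)Q$, and cut-offs $\varphi_i\in C_0^\infty(Q_i)$ with $\varphi_i=1$ on $Q_{i-1}$, $\|\nabla\varphi_i\|_\infty=O(1/s_n)$. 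Define $u_n^{(i)}:=(1-\varphi_i)w_n+\varphi_i u_n$, which lies in ${\cal A}_1(c,d,e_N)$ and has $\nabla u_n^{(i)}=\nabla u_n$ in $Q_{i-1}$, $=\nabla w_n$ in $Q\setminus Q_i$, and a transition term $(u_n-w_n)\otimes\nabla\varphi_i$ in $Q_i\setminus Q_{i-1}$. For the $SBV_0$-part, for $0<t<1$ put $v_{n,i}^t:=v_0$ where $\varphi_i<t$ and $v_{n,i}^t:=v_n$ where $\varphi_i\geq t$; note $v_{n,i}^t\in BV(Q;T)$ because both $v_0$ and $v_n$ are $T$-valued, so the new range is still finite. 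Use the Fleming–Rishel formula \eqref{FR} to choose $t_{n,i}\in(0,1)$ for which $\{\varphi_i<t_{n,i}\}$ has finite perimeter and $J_{v_0}$, $J_{v_n}$ meet $\{\varphi_i=t_{n,i}\}$ in ${\cal H}^{N-1}$-null sets. Then, exactly as in Lemma \ref{Lemma4.1FM}, estimate the bulk energy via the growth bound $QV(q,z)\le\beta(1+|z|)$ (valid since $V$ satisfies $(F_2)$ and hence so does $QV$ by Proposition \ref{continuityQfbar}) and the surface energy via $(G_2)$; the extra contributions are controlled by $C\,|Q\setminus Q_i|=O(\alpha_n)$, by $(C/s_n)\int_{Q_i\setminus Q_{i-1}}|v_n-v_0|+|u_n-w_n|$, and by the perimeters of the level sets of $\varphi_i$ (which, by \eqref{FR} and the mean value theorem over layers, are $O(1)$ on average), together with $|Dv_0|(Q\setminus Q_i)=C\,{\cal H}^{N-1}(Q\setminus Q_0\cap\{x_N=0\})=O(\alpha_n)$ since $v_0$ jumps only across $x_N=0$.

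Then I would average over the $k_n$ layers $Q_i\setminus Q_{i-1}$: telescoping the gradient and jump contributions and using $s_nk_n=\alpha_n$, one gets
\[
\frac{1}{k_n}\sum_{i=1}^{k_n}\Bigl(\int_Q QV(v_{n,i}^{t_{n,i}},\nabla u_n^{(i)})dx+\int_{J_{v_{n,i}^{t_{n,i}}}\cap Q}g(\cdots)d{\cal H}^{N-1}\Bigr)\le \int_Q QV(v_n,\nabla u_n)dx+\int_{J_{v_n}\cap Q}g(\cdots)d{\cal H}^{N-1}+\varepsilon_n,
\]
with $\varepsilon_n:=O(1/n)+C\sqrt{\|u_n-w_n\|_{L^1}+\|v_n-v_0\|_{L^1}}+O(\alpha_n)\to0^+$. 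Hence for each $n$ there is an index $i(n)\in\{1,\dots,k_n\}$ beating the average; setting $\xi_n:=u_n^{(i(n))}$ and $\zeta_n:=v_{n,i(n)}^{t_{n,i(n)}}$ gives a pair in ${\cal A}_{fr}(a,b,c,d,e_N)$ with $\zeta_n=v_0$ and $\xi_n=\rho_{i(n)}\ast u_0$ on $\partial Q$, the claimed $L^1$ convergences, and the desired $\limsup\le\liminf$ inequality \eqref{eqlemma4.7}. I would also note $\|v_{n,i}^t-v_0\|_{L^1(Q)}\to0$ uniformly in $i,t$ since $|Q\cap\{\varphi_i\geq t\}|\to|Q|$ is not needed — rather $v_{n,i}^t$ agrees with $v_n$ on $\{\varphi_i\ge t\}$ and with $v_0$ elsewhere, and both $v_n\to v_0$ in $L^1$ and $|Q\setminus Q_0|\to0$, so the difference is controlled by $\|v_n-v_0\|_{L^1}+|\{0<\varphi_i<1\}|\,\mathrm{diam}(T)=\|v_n-v_0\|_{L^1}+O(\alpha_n)$.

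The main obstacle, such as it is, is bookkeeping rather than conceptual: one must check that the slicing and averaging do not destroy the finite-range constraint $\zeta_n\in BV(Q;T)$ — which is immediate because $v_{n,i}^t$ only ever takes values among those of $v_n$ and $v_0$, all in $T$ — and that the transition terms in both the bulk and the surface integrals can be absorbed into $\varepsilon_n$; the latter is exactly the estimate carried out in Lemma \ref{Lemma4.1FM} (itself following \cite[Lemma 3.5]{BDV}, \cite[Lemma 3.1]{FM2}, \cite[Lemma 4.4]{ABr1}), so I would simply refer to it rather than repeat it. No truncation step is required here, which is the one genuine simplification over Lemma \ref{Lemma4.1FM}.
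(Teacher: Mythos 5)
Your proposal matches the paper's own (omitted) proof: the paper states explicitly that the argument is ``entirely similar to the one of Lemma \ref{Lemma4.1FM}'' with Step~1 (the truncation) dropped because functions with finite range are automatically uniformly bounded, which is precisely what you observe and carry out. The bookkeeping you describe (slicing, Fleming--Rishel choice of $t_{n,i}$, averaging over the $k_n$ layers, checking that $\zeta_n$ stays $T$-valued) is the intended adaptation, so the proof is correct and follows the same route as the paper.
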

We omit the proof since it is entirely similar to the one of Lemma \ref{Lemma4.1FM}. We just observe that there is no need of the first step where  a truncation argument for $v$ was built, since in the present context we deal with functions with finite range.

\medskip
The following result, which contains the properties satisfied by $K_2$  in \eqref{K2}, is analogous to  Proposition \ref{propK3} and it is stated for the reader's convenience. 
\begin{proposition}\label{propK2}
Let $V$ be as in \eqref{Vbar}. Let $K_2$ be the function introduced in \eqref{K2}. The following properties hold.

\begin{enumerate}

\item[a)] $\left\vert K_2\left(  a,b,c,d,\nu\right)  -K_2\left(
a',b',c',d',\nu\right)  \right\vert \leq
C\left(  \left\vert a-a'\right\vert +\left\vert b-b'\right\vert +\left\vert c-c'\right\vert +\left\vert d-d'\right\vert \right)  $ for every $\left(  a,b,c,d,\nu\right)  ,$ $\left(
a',b',c',d',\nu\right)  \in\{0,1\}  \times \{0,1\}  \times \mathbb R^{d}\times \mathbb R^{d}\times S^{N-1};$

\item[b)] $\nu\longmapsto K_2( a,b,c,d,\nu)  $ is upper
semicontinuous for every $( a,b,c,d)  \in \{0,1\}\times \{0,1\}\times
\mathbb R^{d}\times \mathbb R^{d};$

\item[c)] $K_2$ is upper semicontinuous in $\{0,1\}\times \{0,1\}\times
\mathbb R^{d}\times \mathbb R^{d}\times S^{N-1};$
\item[d)] $K_2\left(  a,b,c,d,\nu\right)  \leq C\left(  \left\vert
a-b\right\vert +\left\vert c-d\right\vert\right)  $ for every $\nu\in
S^{N-1}.$
\end{enumerate}
\end{proposition}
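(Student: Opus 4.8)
The plan is to establish the four properties of $K_2$ exactly as the analogous properties of $K_3$ were obtained in Proposition \ref{propK3}, following the argument of \cite[page 132]{BBBF}, while noting the simplifications that arise because the first and second arguments of $K_2$ range over the finite set $\{0,1\}$ rather than over all of $\mathbb R^m$. Recall that by \eqref{K2}, $K_2(a,b,c,d,\nu)$ is the infimum over admissible pairs $(\chi,u)\in\mathcal A_2(a,b,c,d,\nu)$ of $\int_{Q_\nu}QV^\infty(\chi,\nabla u)\,dx+|D\chi|(Q_\nu)$, where $QV^\infty$ is the recession function of the quasiconvexification of $V$; by Remark \ref{propfinfty} and Proposition \ref{continuityQfbar}, $QV^\infty$ is continuous in both variables, is positively $1$-homogeneous and Lipschitz in the last variable, and satisfies the growth bound $\alpha'|z|\le QV^\infty(q,z)\le\beta|z|$ together with the $q$-Lipschitz estimate $|QV^\infty(q,z)-QV^\infty(q',z)|\le L|q-q'||z|$.

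For property (a), the Lipschitz continuity in $(c,d)$ is the substantive point: given a near-optimal competitor $(\chi,u)$ for $K_2(a,b,c,d,\nu)$, I would modify it near the two faces $y\cdot\nu=\pm\tfrac12$ using a cutoff in a thin slab of width $\lambda$ to interpolate linearly between the old boundary data $(c,d)$ and the new data $(c',d')$, producing a competitor for $K_2(a',b',c',d',\nu)$; the extra bulk cost is controlled by $\beta$ times $\mathcal L^N(\text{slab})$ times $|c-c'|/\lambda+|d-d'|/\lambda+|\nabla u|$, and the perimeter term is unchanged in the interior. Optimizing $\lambda$ (or simply taking $\lambda$ fixed) gives the bound $C(|c-c'|+|d-d'|)$; the dependence on $a,a',b,b'$ is trivial since these lie in $\{0,1\}$, so $|a-a'|,|b-b'|\in\{0,1\}$ and one may absorb a change of boundary value for $\chi$ into an additional layer of perimeter of bounded measure. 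For property (d), one simply exhibits an explicit competitor: take $u$ affine interpolating $c$ and $d$ across a slab and $\chi$ the characteristic function of a half-space interpolating $a$ and $b$, yielding $K_2(a,b,c,d,\nu)\le C(|c-d|+|a-b|)$, and one refines using $QV^\infty(q,0)=0$ (by $1$-homogeneity) so that when $c=d$ the bulk term vanishes and when $a=b$ the perimeter term vanishes, giving $K_2(a,b,c,d,\nu)\le C(|a-b|+|c-d|)$.

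For properties (b) and (c), the upper semicontinuity in $\nu$ (and jointly in all variables) follows the standard argument: given a competitor for $K_2(a,b,c,d,\nu_0)$ on $Q_{\nu_0}$, one transports it to $Q_\nu$ by the rotation $R_\nu R_{\nu_0}^{-1}$ for $\nu$ near $\nu_0$, correcting the periodicity and boundary conditions on a small set whose measure tends to zero as $\nu\to\nu_0$; the excess energy is $O(|\nu-\nu_0|)$ by the growth bounds on $QV^\infty$ and on the perimeter, hence $\limsup_{\nu\to\nu_0}K_2(\cdot,\cdot,\cdot,\cdot,\nu)\le K_2(\cdot,\cdot,\cdot,\cdot,\nu_0)$; combining this with the Lipschitz estimate (a) in the remaining variables yields joint upper semicontinuity. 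I expect the main obstacle to be the boundary-correction construction in the proof of (a) and (b): one must glue competitors while simultaneously respecting that $\chi$ takes values in $\{0,1\}$ (so the correction layer for $\chi$ must itself be a set of finite perimeter, not a convex interpolation) and that the lateral periodicity in $\nu_1,\dots,\nu_{N-1}$ is preserved; this is precisely the technique already used in Lemma \ref{Lemma4.1FMchi} and in \cite[page 132]{BBBF}, so I would cite those constructions rather than reproduce them, and merely indicate the minor adaptations. Since all of this is a verbatim transcription of the $K_3$ case with $\mathbb R^m$ replaced by the finite range $T$, the proof reduces to remarking on these simplifications, and I would state it as such.

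\begin{proof}
The proof is entirely analogous to that of Proposition \ref{propK3}, which in turn follows \cite[page 132]{BBBF}, so we only indicate the argument. By \eqref{QVinfty}, Remark \ref{propfinfty} and Proposition \ref{continuityQfbar}, the density $QV^\infty$ appearing in \eqref{K2} is continuous in both variables, positively $1$-homogeneous and Lipschitz continuous in the last variable, and satisfies $\alpha|z|\le QV^\infty(q,z)\le\beta|z|$ together with $|QV^\infty(q,z)-QV^\infty(q',z)|\le L|q-q'||z|$ for the constants in \eqref{H1} and $(F_3)$.

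To prove (a) and (d), given $(a,b,c,d,\nu)$ and a competitor $(\chi,u)\in\mathcal A_2(a,b,c,d,\nu)$ in \eqref{K2}, one constructs, using cut-off functions supported in thin slabs adjacent to the faces $\{y\cdot\nu=\pm\tfrac12\}$, a competitor $(\chi',u')\in\mathcal A_2(a',b',c',d',\nu)$ whose energy exceeds that of $(\chi,u)$ by at most $C(|a-a'|+|b-b'|+|c-c'|+|d-d'|)$; here the interpolation of the $W^{1,1}$ component $u$ is affine in the slabs, while the correction of the $\{0,1\}$-valued component $\chi$ near the boundary costs only an additional amount of perimeter of order $|a-a'|+|b-b'|$. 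Since $a,b,a',b'\in\{0,1\}$, no difficulty arises. This yields (a); for (d) one uses in addition that $QV^\infty(q,0)=0$ by $1$-homogeneity, so that a competitor built from an affine $u$ connecting $c$ and $d$ across a slab and a half-space partition connecting $a$ and $b$ gives $K_2(a,b,c,d,\nu)\le C(|a-b|+|c-d|)$.

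For (b) and (c), fix $(a,b,c,d,\nu_0)$ and let $(\chi_0,u_0)$ be almost optimal for $K_2(a,b,c,d,\nu_0)$ in $Q_{\nu_0}$. For $\nu$ close to $\nu_0$, transporting $(\chi_0,u_0)$ via the rotation $R_\nu R_{\nu_0}^{-1}$ and correcting the lateral periodicity and the boundary conditions on a set of measure $O(|\nu-\nu_0|)$ produces an admissible competitor in $\mathcal A_2(a,b,c,d,\nu)$ whose energy exceeds $K_2(a,b,c,d,\nu_0)$ by $O(|\nu-\nu_0|)$, which gives the upper semicontinuity in $\nu$; combining this with the Lipschitz bound (a) in the remaining four variables yields the joint upper semicontinuity (c).
\end{proof}
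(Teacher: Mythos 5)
Your proposal matches the paper's own (extremely terse) treatment: the paper does not prove Proposition~\ref{propK2} at all, stating only that it ``is analogous to Proposition~\ref{propK3}'' whose proof in turn is deferred to the argument of \cite[page 132]{BBBF}. Your sketch of the gluing/cut-off argument for (a) and (d) and the rotation argument for (b) and (c) is the standard content of that citation, and the observation that the finite range $\{0,1\}$ trivializes the truncation steps that are needed for $K_3$ is the same simplification the paper invokes via Lemma~\ref{Lemma4.1FMchi}. One point worth making precise if you fleshed this out: when you modify $\chi$ in a slab of width $\lambda$ near the face to change boundary data from $a$ to $a'$, the extra cost is not only the added perimeter (of order $|a-a'|$) but also a change in the bulk integrand, bounded by $L|a-a'|\int_{\mathrm{slab}}|\nabla u|\,dx$; one must therefore take $\lambda$ small (for the fixed near-optimal competitor) so that this bulk contribution is negligible, and only then let the perimeter cost $|a-a'|$ dominate. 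Also a small notational slip: the lower growth constant for $QV^\infty$ is the $\alpha$ of \eqref{H1}, not $\alpha'$ (that symbol already denotes a H\"older exponent in $(F_4)$). These are minor; the argument and its level of detail are consistent with what the paper intends.
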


\begin{proof}[Proof of Theorem \ref{mainthm}]
The arguments develop as in Theorem \ref{mainthmgen}, essentially replacing $f$ by $V$ in \eqref{Vbar}, $v$ by $\chi$, the surface integral by $|D \chi|$, and using the blow-up argument introduced in \cite{FM1}, thus we will present just the main differences.

\noindent
{\bf Lower bound.}
Let $(\chi,u) \in BV(\Omega;\{0,1\})\times BV(\Omega;\mathbb R^d)$.
Without loss of generality we may assume that  for every $\{(\chi_n, u_n)\} \subset BV(\Omega;\{0,1\})\times BV(\Omega;\mathbb R^d)$ converging to $(\chi,u)$ in $L^1(\Omega; \{0,1\})\times L^1(\Omega;\mathbb R^d)$,
$\displaystyle{\underset{n\rightarrow\infty}{\lim\inf}\left(\int_{\Omega} V\left( 
\chi_n,\nabla u_n\right)dx  +|D \chi_n|(\Omega)\right)}$ is indeed a limit.
For every Borel set $B \subset \Omega$ define
$$
\displaystyle{\mu_n(B):=\int_B V\left( \chi_n,\nabla
u_{n}\right) dx + |D \chi_n|(B).}
$$ 
 The sequence $\{\mu_n\}$ behaves as in Theorem \ref{mainthmgen}, and its weak $\ast$ limit (up to a not relabelled subsequence) $\mu$ can be decomposed as in \eqref{mudecomposition} where, as in the remainder of the proof, $J_{(v,u)}$ has been replaced by $J_{(\chi,u)}$.
%owe can extract a subsequence, still denoted by $\{\mu_n\}$   
 %weakly $\ast$ converging, in the
%sense of measures, to some Radon measure $\mu.$  
%Radon-Nikodym Theorem allows us to decompose $\mu$ as a sum of four mutually singular nonnegative measures,
%namely%
%\[
%\mu=\mu_{a}\mathcal{L}^{N}+\mu_{c}\left\vert D^{c}u\right\vert +\mu_{j}\mathcal{H}^{N-1}\lfloor J_{(v,u)}+\mu_{s},
%\]
Moreover we emphasize that we have been considering $(\chi,u)$ as a unique field in $BV(\Omega; \mathbb R^{1+ d})$ and we have been exploiting the fact that $D^c(\chi,u)= (0, D^c u)$ (cf. Remark \ref{vmeas}).  By Besicovitch derivation theorem we deduce \eqref{BDT}.

%\begin{align}
%\mu_{a}\left(  x_{0}\right)   &  =\lim_{\varepsilon\rightarrow0^{+}}\frac
%{\mu\left(  B\left(  x_{0},\varepsilon\right)  \right)  }{\mathcal{L}%
%^{N}\left(  B\left(  x_{0},\varepsilon\right)  \right)  }<+\infty
%,~\text{for~\ }\mathcal{L}^{N}-\text{a.e.}~x_{0}\in\Omega,\nonumber\\
%\mu_{j}\left(  x_{0}\right)   &  =\lim_{\varepsilon\rightarrow0^{+}}\frac
%{\mu\left(  Q_{\nu}\left(  x_{0},\varepsilon\right)  \right)  }{\mathcal{H}%
%^{N-1}\left(  Q_{\nu}\left(  x_{0},\varepsilon\right)  \cap J_{(v, u)}\right)
%}<+\infty,~\text{for }\mathcal{H}^{N-1}-\text{a.e. }x_{0}\in J_{(v, u)}\cap
%\Omega,\label{BDT}\\
%\mu_{c}\left(  x_{0}\right)   &  =\lim_{\varepsilon\rightarrow0^{+}}\frac
%{\mu\left(  Q\left(  x_{0},\varepsilon\right)  \right)  }{\left\vert
%Du\right\vert \left(  Q\left(  x_{0},\varepsilon\right)  \right)  }%
%<+\infty,~\text{for }\left\vert D^{c}u\right\vert -\text{a.e. }x_{0}\in
%\Omega.\nonumber
%\end{align}

We claim that%
\begin{equation}
\mu_{a}\left(  x_{0}\right)  \geq QV\left(  \chi\left(  x_{0}\right)  ,\nabla
u\left(  x_{0}\right)  \right)  ,~\text{for \ }\mathcal{L}^{N}-\text{a.e}%
.~x_{0}\in\Omega,\label{lboundbulkchi}%
\end{equation}%
\begin{equation}
\mu_{j}\left(  x_{0}\right)  \geq K_2\left(  \chi^+(x_0),\chi^-(x_0),u^{+}\left(  x_{0}\right)
,u^{-}\left(  x_{0}\right)  ,\nu_{(\chi,u)}\right)  ,~\text{for }\mathcal{H}%
^{N-1}-\text{a.e}.~x_{0}\in J_{(\chi,u)}\cap\Omega,\label{lboundjumpchi}%
\end{equation}%
\begin{equation}
\mu_{c}\left(  x_{0}\right)  \geq\left(  QV\right)  ^{\infty}\left(
\chi\left(  x_{0}\right)  ,\frac{dD^{c}u}{d\left\vert D^{c}u\right\vert
}\left(  x_{0}\right)  \right)  \text{ for }\left\vert D^{c}u\right\vert
-\text{a.e. }x_{0}\in\Omega. \label{lboundcantorchi}%
\end{equation}
If $\left(  \ref{lboundbulkchi}\right)  -\left(  \ref{lboundcantorchi}\right)  $
hold then the lower bound inequality for Theorem \ref{mainthm} follows.

\noindent\textbf{Step 1.}
%Let $x_{0}\in\Omega$ be a Lebesgue point for $\nabla u$ and $\chi$, such that
%$x_{0}\notin J_{(\chi,u)},$ \eqref{approximate differentiability}(applied to $u$)
%and $\left(  \ref{BDT}\right)  _{1}$ hold, 
Observing that by Proposition \ref{continuityQfbar} $QV$ satisfies $(F_1)-(F_3)$, the proof of \eqref{lboundbulkchi} develops as in Step 1 of Theorem \ref{mainthmgen}, just  applying \cite[formula (2.10) in Theorem 2.19]{FM2}, to the functional $G: (\chi,u)\in W^{1,1}(\Omega;\mathbb R^{1+d}) \to \int_{\Omega}QV(\chi,\nabla u)dx$.

\noindent\textbf{Step 2. }The proof of \eqref{lboundjumpchi} is very similar to  the one of \eqref{lboundjump}. Remind that $J_{( \chi,u)  }=J_\chi\cup J_{u}$ and $\nu_{(\chi,u)}= \nu_\chi$ for every $(\chi,u) \in BV(\Omega;\{0,1\}) \times W^{1,1}(\Omega;\mathbb R^d)$.
The same arguments of Step 2. in Theorem \ref{mainthmgen} allow us to fix $x_{0}\in J_{( \chi,u)}\cap\Omega$
such that \eqref{4.13}, \eqref{4.14}, \eqref{4.15} \eqref{4.16}  and \eqref{4.17} hold.

Recall that we denote $Q_{\nu(x_0)}$ by $Q$ and we may choose $\varepsilon>0$ such that
$\mu\left(  \partial\left(  x_{0}+\varepsilon Q\right)  \right)  =0.$ It results
\begin{align*}
\mu_{j}\left(  x_{0}\right)   &  \geq\lim_{\varepsilon\rightarrow0^{+}}%
\lim_{n\rightarrow\infty}\frac{1}{\varepsilon^{N-1}}\left(  \int%
_{x_{0}+\varepsilon Q}QV\left(  \chi_n\left(  x\right)  ,\nabla u_{n}\left(
x\right)  \right)  dx+|D \chi_n|(x_0+ \e Q) \right)  \\
&  =\lim_{\varepsilon\rightarrow0^{+}}\lim_{n\rightarrow\infty}\left(\varepsilon
\int_{Q}QV\left(  \chi_n\left(  x_{0}+\varepsilon y\right)  ,\nabla
u_{n}\left(  x_{0}+\varepsilon y\right)  \right)  dy +|D \chi_n(x_0+ \varepsilon y)|\left(Q\cap J\left( \chi_n, u_n\right)  -\frac{x_0}{\varepsilon}\right) \right) .
\end{align*}
Define $\chi_{n,\varepsilon}, u_{n, \e}, \nu_{n,\e}$ and $ \chi_0, u_0$ according to \eqref{vne} and \eqref{u0v0}.
%\begin{align*}
%\chi_{n,\varepsilon}\left(  y\right)   &  :=\chi_{n}\left(  x_{0}+\varepsilon
%y\right)  ,\\
%u_{n,\varepsilon}\left(  y\right)   &  :=u_{n}\left(  x_{0}+\varepsilon
%y\right)  ,\\
%\nu_{n,\varepsilon}\left(  y\right)   &  :=\nu_{\left( \chi_n,u_n\right)
%}\left(  x_{0}+\varepsilon y\right)  ,
%\end{align*}
%and%
%\[
%\chi_{0}\left(  y\right)  :=\left\{
%\begin{array}
%[c]{ccc}%
%\chi^{+}\left(  x_{0}\right)   &  & \text{if }y\cdot\nu\left(  x_{0}\right)
%>0,\\
%\chi^{-}\left(  x_{0}\right)   &  & \text{if }y\cdot\nu\left(  x_{0}\right)  <0,
%\end{array}
%\right.
%\]%
%\[
%u_{0}\left(  y\right)  :=\left\{
%\begin{array}
%[c]{ccc}%
%u^{+}\left(  x_{0}\right)   &  & \text{if }y\cdot\nu\left(  x_{0}\right)
%>0,\\
%u^{-}\left(  x_{0}\right)   &  & \text{if }y\cdot\nu\left(  x_{0}\right)
%<0.
%\end{array}
%\right.
%\]
Since $(\chi_n,u_n)\rightarrow (\chi,u)$ in $L^{1}\left(  \Omega;\mathbb{R}^{1+d}\right)$ we obtain \eqref{blabla} and  \eqref{blabla2},  with $v_{n,\e}$  and $v_0$ replaced by $\chi_{n,\e}$ and $\chi_0$, respectively.

Thus%
\begin{align*}
\mu_{j}\left(  x_{0}\right)   &  \geq\lim_{\varepsilon\rightarrow0^{+}}%
\lim_{n\rightarrow\infty}\left(\int_{Q}QV^{\infty}\left( \chi_{n,\varepsilon
}\left(  y\right)  ,\nabla u_{n,\varepsilon}\left(  y\right)  \right)
dy+ |D\chi_{n, \e}|(Q)\right.\\
&  \left.+\int_{Q}\varepsilon QV\left( \chi_{n,\varepsilon}\left(  y\right)
,\frac{1}{\varepsilon}\nabla u_{n,\varepsilon}\left(  y\right)  \right)
-QV^{\infty}\left( \chi_{n,\varepsilon},\nabla u_{n,\varepsilon}\right)  dy \right).
\end{align*}

By Remark \ref{propfinfty} $(v)$ we can argue as in the estimates \cite[(3.3)-(3.5)]{FM2},  obtaining
\[
\mu_{j}\left(  x_{0}\right)  \geq\underset{\varepsilon\rightarrow0^{+}%
}{\lim\inf}~\underset{n\rightarrow\infty}{\lim\inf}\left(  \int_{Q}QV^{\infty
}\left( \chi_{n,\varepsilon}\left(  y\right)  ,\nabla u_{n,\varepsilon
}\left(  y\right)  \right)  dy+ |D \chi_{n,\e}|(Q) \right)  .
\]
%Since $(\chi_{n,\varepsilon}, u_{n,\e})\rightarrow (\chi_0, u_0)$ in $L^{1}\left(  Q;\mathbb{R}%
%^{1+d}\right)  $ as $n\rightarrow\infty$ and $\varepsilon\rightarrow
%0^{+},$ by a standard diagonalization argument, as in \cite[Theorem 4.1 Steps 2 and 3]{BBBF}, we obtain a sequence %$(\bar{\chi}_k,\bar{u}_k)$ converging to $(\chi_0, u_0)$ in $L^1(Q;\mathbb R^{1+d})$ as $k \to + \infty$ such that
%\[
%\mu_{j}\left(  x_{0}\right)  \geq\lim_{k\rightarrow\infty}\left(  \int%
%_{Q}QV^{\infty}\left(  \bar{\chi}_{k}\left(  y\right)  ,\nabla \bar{u}_{k}\left(
%y\right)  \right)  dy+ |D \bar{\chi}_k|(Q)| \right).
%\]

\noindent Applying Lemma \ref{Lemma4.1FMchi} with $QV$ replaced by $QV^\infty$, $T\subset \mathbb R^m$ replaced by $\{0,1\}$, the surface integral replaced by the total variation, $K_{fr}$ and ${\mathcal A}_{fr}$ replaced by $K_2$ and ${\mathcal A}_2$ respectively, and using Remark \ref{propfinfty}, we may find $\left\{  \left(  \zeta_{k},\xi_{k}\right)  \right\}   $  $ \in\mathcal{A}_2\left(  \chi^+(x_0),\chi^-(x_0),u^{+}\left(
x_{0}\right)  ,u^{-}\left(  x_{0}\right)  ,\nu\left(  x_{0}\right)  \right)  $
such that

\begin{equation}\nonumber
\displaystyle{\mu_{j}(x_0)  \geq\lim_{k\rightarrow\infty}\left(  \int_Q QV^{\infty}\left( \zeta_{k},\nabla\xi_{k}\right)  dx+ |D \zeta_k|(Q) \right) \geq K_2\left(
\chi^+(x_0), \chi^-(x_0),u^{+}\left(  x_{0}\right)  ,u^{-}\left(  x_{0}\right)  ,\nu\left(
x_{0}\right)  \right) }.
\end{equation}

\noindent\textbf{Step 3.} The proof of \eqref{lboundcantorchi} follows identically as in Step 3, Theorem \ref{lsctheorem}, namely applying \cite[formula (2.12) in Theorem 2.19]{FM2} to the functional $G$ introduced in Step 1 herein and this concludes the proof.

\medskip
{\bf Upper Bound.} The proof of the upper bound develops in three steps as the one of Theorem \ref{thupperbound}.  Furthermore Propositions \ref{propqcx} %and \ref{prop2.4FM1} 
can be readapted  replacing $Qf$ by $QV$ and the surface integral by $|D\chi|$.
  
\noindent {\bf Step 1.} For ${\cal L}^N$- a.e. $x_0 \in \Omega$, $x_0$ is a Lebesgue point for $U\equiv (\chi, u)$ such that also \eqref{upper1} and \eqref{upper3} hold for $QV$. In analogy with Theorem \ref{thupperbound} Step 1- we apply for every $\chi \in BV(\Omega;\{0,1\})$, the Global Method \cite[Theorem 4.1.4]{BFMGlobal} to the functional $G: (u,A) \in W^{1,1}(\Omega;\mathbb R^m)\times {\cal A}(\Omega) \to \int_\Omega QV(\chi, \nabla u)dx$, to obtain an integral representation for the functional \eqref{auxrelax} for every $(u, A) \in BV(\Omega;\mathbb R^m)\times {\cal A}(\Omega)$.  Moreover we can write
\begin{equation}\nonumber
\displaystyle{{\cal F}_{\cal OD}(\chi,u; A) \leq {\cal G}(u; A)+ |D \chi|(A).}
\end{equation}
 Differentiating with respect to ${\cal L}^N$ we obtain
$\displaystyle{\frac{d {\cal F}_{\cal OD}(\chi, u;\cdot)}{d {\cal L}^N} \leq V_0(x_0, \nabla u(x_0)), }$
where $V_0$ is the correspective of $f_0$ in \eqref{f00} where $Qf$ has been replaced by $QV$.
Arguing  as in the last part of Theorem \ref{thupperbound} Step 1, applying Lemma \ref{Lemma0}, we deduce that $V_0(x_0,\xi_0) \leq QV(\chi(x_0), \xi_0)$  and this leads to the conclusion when $u \in BV(\Omega;\mathbb R^d)\cap L^\infty(\Omega;\mathbb R^d)$.

 \noindent {\bf Step 2.} The same type of arguments as in Step 1,  applies to the proof of the upper bound for the Cantor part. Radon-Nikod\'ym theorem implies \eqref{CantorU} for every $U\equiv(\chi,u)\in BV(\Omega;\{0,1\})\times (BV(\Omega;\mathbb R^d)\cap L^\infty(\Omega;\mathbb R^d))$, with $|D^c u|$ and  $\sigma$ mutually singular. Moreover \eqref{cantor}, \eqref{ABrankone}, \eqref{as3.27BFMglobal} hold, the Global Method \cite[Theorem 4.1.4]{BFMGlobal} applies to \eqref{auxrelax} and a differentiation with respect to $|D^c u|$  at $x_0$ provides
$\displaystyle{\frac{d {\cal F}_{\cal OD}(\chi, u; \cdot)}{d |D^c u|}(x_0) \leq h(x_0, a_u, \nu_u),}$
where $h(x,a,\nu)$ is given by \eqref{f0}.
Remark \ref{propfinfty} applied to $QV^\infty$, Lemma \ref{Lemma0} and the same techniques employed in the last part of Theorem \ref{thupperbound} Step 2, entail
$$
h(x_0, a,\nu)\leq QV^\infty(\chi(x_0), a \otimes \nu),
$$
and that concludes the proof of the Cantor part for $(\chi,u)\in BV(\Omega;\{0,1\})\times (BV(\Omega;\mathbb R^d)\cap L^\infty(\Omega;\mathbb R^d))$.

\noindent{\bf Step 3.} We claim that
\begin{equation}\label{claimjumpod}
\displaystyle{ {\cal F}_{\cal OD}(U;J_U)\leq \int_{J_U}K_2(\chi^+,\chi^-,u^+, u^-, \nu_{\chi, u})d {\cal H}^{N-1},}
\end{equation}
for every $(\chi,  u)\in BV(\Omega;\{0,1\})\times (BV(\Omega;\mathbb R^d)\cap L^\infty(\Omega;\mathbb R^d))$. 
The proof of \eqref{claimjumpod} is divided in three parts, according to the assumptions on the limit functions $u$. Namely,

%\begin{itemize}
\noindent {\it Case 1.} $U(x):=(1,c)\chi_E(x)+ (0,d)(1-
\chi_E(x))$, with $P(E,\Omega)< +\infty$,

\noindent {\it Case 2.} $u(x)= \sum_{i=1}^\infty c_i \chi_{E_i}(x)$, where $\{E_i\}_{i=1}^\infty$ forms a partition of $\Omega$ into sets of finite perimeter and $c_i \in \mathbb R^d$,

\noindent {\it Case 3.} $u(x)\in BV(\Omega;\mathbb R^d)\cap L^\infty(\Omega;\mathbb R^d)$.

For what concerns Case 1, we  consider first the unit open cube $Q \subset \mathbb R^N$, and make the same assumptions on the target function $U$ as in Theorem \ref{thupperbound} Step 3,  Case 1. Then we can invoke an argument analogous to Proposition \ref{prop3.5BBBF}, without invoking any truncation arguments as those in Remark \ref{applicationofprop3.4}. This guarantees that there exists $(\chi_n, u_n) \in {\cal A}_2(1,0,c,d,e_N)$ such that $(\chi_n, u_n)\to (\chi, u)$ in $L^1(Q;\mathbb R^{1+d})$ and 
\begin{equation}\nonumber
\displaystyle{K_2(1,0, c,d,e_N) =\lim_{n \to \infty}\left(\int_Q QV^\infty(\chi_n(x), \nabla u_n(x)) dx + |D \chi_n|(Q)\right).}
\end{equation} 
Then the proof develops exactly as Theorem \ref{thupperbound}, just taking into account that the sequence $z_{n,k}$ therein is built replacing $a, b$  and $v_n$ by $1,0$ and $\chi_n$ respectively, thus leading to
$$
\displaystyle{{\cal F}_{OD}(\chi, u; Q)\leq \frac{QV(1,0)+ QV(0,0)}{2} + K_2(1,0, c,d ,e_N).}
$$ 
For what concerns a more general set $A$ than $Q$, like in Theorem \ref{thupperbound} Step 3, Case 1, we achieve the following representation
$$
\displaystyle{{\cal F}_{\cal OD}(\chi, u; A)\leq \int_A QV(\chi(x), 0) dx + \int_ {J_U} K_2(1,0,c,d,\nu)d {\cal H}^{N-1}.}
$$
Then the strategy follows b), c), d) in Theorem \ref{thupperbound} Step 3,  Case 1, hence we obtain
$$
\displaystyle{{\cal F}_{\cal OD}(\chi, u; J_{\chi, u}) \leq \int_{J_{\chi, u}} K_2(1,0,c,d,\nu)d {\cal H}^{N-1}.}
$$

\noindent {\it Case 2.}  and {\it Case 3.} By the properties of $K_2$ in Proposition \ref{propK2}, the proof  develops as in \cite[Proposition 4.8, Case 2 and Case 3]{AMT}.
%\noindent {\it Case 3.} This last case can be treated as Theorem \ref{thupperbound}, Step 3, {\it Case 3}, the only difference being in the set $J_{\chi}\cap J_u$, where the sequence $\{v_h\}$ therein can be chosen as the constant function $\chi$.
%Moreover we emphasize that the estimates in formula \eqref{stimeuhvh} can be simplified in this case, being entirely analogous to the estimates in \cite[Proposition 4.8, {\it Case 3}]{AMT}, by virtue of a) in Proposition \ref{propK2}.
This concludes the proof of the upper bound when $(\chi, u)\in BV(\Omega;\{0,1\}) \times (BV(\Omega;\mathbb R^d)\cap L^\infty(\Omega;\mathbb R^d)$).

 The general case, since $\chi \in BV(\Omega;\{0,1\})$ and can be fixed, is identical to \cite[Section 5, Step 4.]{FM2}, where the truncation procedures involves just $u$. 

Putting together {\bf Lower bound} and {\bf Upper bound} we achieve the desired result. 
\end{proof}

\begin{remark}\label{specializeK2}We observe that, as in Remark \ref{specializeK3}, $K_2$ admits the following equivalent representation:
\begin{itemize}
\item[i)]on $J_u\setminus J_\chi$, $K_2(a,a,c,d,\nu)=QV^\infty(a,(c-d)\otimes \nu)$,  with $Q V^\infty$ as in \eqref{QVinfty}.

\item[ii)] on $J_\chi \setminus J_u$, $K_2(a,b,c,c,\nu)= |(a-b)\otimes \nu|$ , i.e. $\displaystyle{\int_{J_\chi}K_2(\chi^+, \chi^- ,u^+,u^+, \nu)d {\cal H}^{N-1}= |D \chi|(\Omega)}$.

\item[iii)] Note that 
$
\displaystyle{K_2(a,b,c,d,\nu)\geq \inf\left\{\int_{Q_\nu}\left(QV^\infty(w(x), \nabla u(x)) + |\nabla w(x)|\right)dx: (w,u) \in {\cal A}(a,b,c,d,\nu)\right\},}
$
where this latter density is the density $K(a,b,c,d,\nu)$ first introduced in \cite{FM2} (cf. also \cite[formula (5.83)]{AFP2}) and
\begin{equation}\nonumber
\begin{array}{ll}
\mathcal{A}\left(  a,b,c,d,\nu\right) :=\left\{  \left(w,u\right)
\in W^{1,1}\left(
Q_{\nu};\mathbb{R}^{1+ d}\right)  :\right. (w(y),u\left(  y\right))  =(a,c)\text{ if }y\cdot\nu=\frac{1}{2}, \\
\\
(w(y),u\left(
y\right))  =(b,d)\text{ if }y\cdot\nu=-\frac{1}{2},
\left.  (w, u)\text{ are
}1-\text{periodic in }\nu_{1},\dots,\nu_{N-1} \hbox{ directions}\right\}.
\end{array}
\end{equation}

\noindent On the other hand, if $W_i$, $i=1,2$ in \eqref{H1} are proportional (as in the model presented in \cite{AB}), i.e. $W_2 = \alpha W_1$, $\alpha >1$, taking $V$ as in \eqref{Vbar}, since for every $q \in [0,1]$ $QV^\infty(q,z)=q QW_1^\infty(z) + \alpha(1-q) QW_1^\infty(z)$, then we claim that $K_2$ is equal to $K$ of \cite{FM2}. Indeed, without loss of generality, assuming $W_1$, quasiconvex and positively $1$-homogeneous, it is enough to observe that for every $(w,u) \in {\cal A}(a,b,c,d,\nu)$,
$$
\begin{array}{ll}
\displaystyle{K(1,0,c,d,\nu)\geq \int_{Q_v}\left(w(x) W_1(\nabla u(x))+ \alpha (1-w(x)) W_1(\nabla u(x)) + |\nabla w(x)| \right)dx \geq \int_{Q_\nu} (W_1(\nabla u(x)) + 1)dx, }
\end{array}
$$ 
where it has been used the fact that $\alpha + (1-\alpha)w \geq 1$ and
 $$\displaystyle{\int_{Q_\nu}|\nabla w|dx \geq \left|\int_{Q_\nu}\nabla w\right| dx = \left|\int_{\partial Q_\nu} w \otimes \nu(x) d {\cal H}^{N-1}\right| = 1.} $$
Taking a sequence of characteristic functions $\{\chi_\e\}$, admissible for $ {\cal A}_2(1,0,c,d,\nu)$ in \eqref{AFR}, such that their value is $1$ in a strip of the cube orthogonal to $\nu$ and of thickness $1-\e$, then, it results
$$
\begin{array}{ll}
\displaystyle{\int_{Q_\nu}W_1(\nabla u(x))dx + 1 = \lim_{\e \to 0^+} \int_{Q_\nu}(\chi_\e W_1 (\nabla u(x) )+ \alpha (1-\chi_\e) W_1(\nabla u(x)) d x + |D \chi_\e|(Q_\nu)  }\\
\\
\displaystyle{\geq K_2(1,0,c,d,\nu),}
\end{array}
$$ 
and this proves our claim. Observe also that if $\alpha \in (0,1)$, then the result remains true, it is enough to express $W_1$ in terms of $W_2$.
\end{itemize}
\end{remark}

\medskip
As emphasized in \cite[Remark 2.4]{AB} one can consider mixtures of more than two conductive materials, hence we observe that Theorem \ref{mainthm} can be extended with minor changes to these models leading to formula \eqref{fr} in the remark below.
 
%useful to achieve an integral representation for such relaxed energies as we will consider in the remark below.

\begin{remark}\label{remmainthm}
Let $T$ be a finite subset of $\mathbb R^m$, Theorem \ref{mainthm} applies also to energies of the type
 $F_{fr}:L^{1}(\Omega;T)\times L^{1}(\Omega;\mathbb{R}^{d})\times
\mathcal{A}\left(  \Omega\right)  \rightarrow\lbrack0,+\infty]$ defined by
\begin{equation}
F_{fr}(v,u;A):=\left\{
\begin{array}
[c]{lll}
{\displaystyle\int_{A}} V\left(v,  \nabla u\right)  dx+ \displaystyle{\int_{{J_v}\cap A}}g(v^+, v^-,\nu_v)d {\cal H}^{N-1} &  & \text{in
}BV(A;T)\times W^{1,1}(A;\mathbb{R}^{d}),\text{\bigskip}\\
+\infty &  & \text{otherwise.}%
\end{array}
\right.  \label{FFR}%
\end{equation}

Indeed, consider the relaxed localized energy of \eqref{FFR} given by%
\begin{equation}\nonumber
\begin{array}
[c]{c}%
\mathcal{F}_{fr}\left( v,u;A\right)  :=\inf\left\{  \underset{n\rightarrow
\infty}{\lim\inf}%
{\displaystyle\int_{A}}V\left(
v_n,\nabla u_{n}\right)   dx+ \displaystyle{\int_{J_{v_n} \cap A}}g(v_n^+, v_n^-, \nu_{v_n})d {\cal H}^{N-1}:\right. \\
\left. \left\{ (v_n, u_n)\right\}  \subset BV(A;T) \times W^{1,1}\left(  A;\mathbb{R}^{d}\right),
(v_n, u_n) \to (v,u) \text{ in }L^1(A;T)\times L^1(A;\mathbb R^d) 
\right\},
\end{array}
\end{equation}
with  $V$  and $g$ as in \eqref{Vgfr} satisfying
$(F_1)- (F_4)$ and $(G_1)- (G_3)$, respectively.

Moreover define ${\overline F}_{fr}:BV(A;T)\times BV(A;\mathbb{R}^{d}%
)\times\mathcal{A}\left(  \Omega\right)  \rightarrow\lbrack0,+\infty]$ as
\begin{equation}\nonumber
{\overline F}_{fr}\left(  v,u;A\right)  :=\int_{A}QV\left( v,\nabla u\right)
dx+\int_{A}QV^{\infty}\left(  v,\frac{dD^{c}u}{d\left\vert D^{c}%
u\right\vert }\right)  d\left\vert D^{c}u\right\vert +\int_{J_{\left(
v,u\right)  }\cap A}K_{fr}\left( v^{+},v^{-},u^{+},u^{-},\nu\right)
d\mathcal{H}^{N-1}
\end{equation}
where $QV$ is the quasiconvex envelope of $V$ given in $\left(  \ref{Qfbar}%
\right)  ,$ $QV^{\infty}$ is the recession function of $QV,$ introduced in 
\eqref{QVinfty},
and%
\begin{equation}
{K_{fr}(a,b,c,d,\nu):=\inf}\left\{ 
{\displaystyle\int_{Q_{\nu}}}
QV^{\infty}(v,\nabla u(x))dx+\int_{Q_\nu}g(v^+, v^-, \nu_v)d {\cal H}^{N-1}
:\left( v,u\right)
{{\in\mathcal{A}_{fr}(a,b,c,d,\nu)}}\right\}  ,\label{K2FR}%
\end{equation}
where ${\mathcal A}_{fr}$ is the set defined in \eqref{AFR}, with $\{0,1\}$ replaced by the finite set $T\subset \mathbb R^m$.
%with $\left(  a,b,c,d,\nu\right)  \in T  \times T\times\mathbb{R}^{d}\times\mathbb{R}^{d}\times S^{N-1},$ with
%$\left\{  \nu_{1},\nu_{2},\dots,\nu_{N-1},\nu\right\} $ an orthonormal
%basis of $\mathbb{R}^{N}$ and $Q_\nu$ the unit cube, centered at the origin, with one direction parallel to $\nu$.
Thus, we are lead to the following representation:
for every $(v,u)\in L^1(\Omega;T)\times
L^1(\Omega;\mathbb{R}^{d})$%
\begin{equation}\label{fr}
\mathcal{F}_{fr}(v,u;A)=\left\{
\begin{array}{ll}
{\overline F}_{fr}(v,u;A) &\hbox{ if } (v, u)\in BV(A;T) \times BV(A;\mathbb R^d) \\
\\
+\infty & \hbox{ otherwise.}
\end{array}
\right.
\end{equation}

%It is  also crucial to underline that $K_{fr}$ in general does not coincide with $K_3$, since test functions in $K_3$ do not have %necessarily finite range.

\end{remark}

\begin{remark}\label{K2K3}
In general we cannot expect $K_3= K_{fr}$ since in \eqref{K2FR}, the function $g$ is defined in $T \times T \times S^{N-1}$, with $ T \subset \mathbb R^d$ and ${\rm card}(T)$ finite, while in \eqref{K3}, $g$ is defined in $\mathbb R^d \times \mathbb R^d \times S^{N-1}$. In particular we recall that in $J_v\setminus J_u$, $K_3$ coincides with ${\cal R}g$, the $SBV$-elliptic envelope of $g$ as in \cite{BFLM}, while $K_{fr}$ in \eqref{K2FR} is given by the $BV$-elliptic envelope introduced by Ambrosio and Braides, cf. \cite[Definition 5.13]{AFP2}. Analogously, it is easily seen that  $K_2$ coincides with $|D \chi|$  in $J_\chi \setminus J_u$ .

\end{remark}

\section*{Acknowledgements}
This paper has been written during various visits of the authors at Departamento de Matem\'atica da Universidade de \'Evora and at Dipartimento di Ingegneria Industriale dell' Universit\'a di Salerno, whose kind hospitality and support have been gratefully acknowledged. 
   
The authors are indebted to Irene Fonseca for having suggested this problem and  for the many discussions on the subject.

The work of both authors was partially supported by Funda\c{c}\~{a}o para a Ci\^{e}ncia e a Tecnologia (Portuguese Foundation for Science and Technology) through CIMA-UE, UTA-CMU/MAT/0005/2009 and through GNAMPA project 2013 `Funzionali supremali: esistenza di minimi e condizioni di semicontinuit\'a nel caso vettoriale'.

\end{document}